\let\originalparagraph\paragraph
\renewcommand{\paragraph}[2][.]{\originalparagraph{#2#1}}
\newtheorem{thm}{Theorem}[section]
\newtheorem{lem}[thm]{Lemma}
\newtheorem{prop}[thm]{Proposition}
\newtheorem{cor}[thm]{Corollary}
\theoremstyle{definition}
\newtheorem{example} [thm]{Example}
\newtheorem{rem}[thm]{Remark}
\newtheorem{definition}[thm]{Definition}
\DeclareMathOperator{\id}{id}
\DeclareMathOperator{\Id}{Id}
\DeclareMathOperator{\Eq}{Eq}
\DeclareMathOperator{\rank}{rk}
\DeclareMathOperator{\img}{im}
\DeclareMathOperator{\Hom}{Hom}
\DeclareMathOperator{\tr}{tr}
\newcommand{\R}{\mathbb{R}}      
\newcommand{\Z}{\mathbb{Z}}      
\newcommand{\N}{\mathbb{N}}      
\newcommand{\Ext}{\raisebox{0.03cm}{\mbox{\footnotesize$\textstyle{\bigwedge}$}}}
\newcommand{\thickhline}{%
    \noalign {\ifnum 0=`}\fi \hrule height 1pt
    \futurelet \reserved@a \@xhline
}
\newcolumntype{"}{@{\hskip\tabcolsep\vrule width 1pt\hskip\tabcolsep}}
\newcolumntype{x}[1]{>{\centering\arraybackslash\hspace{0pt}}p{#1}}
\begin{document}

\nocite{*}

\title{Odd Khovanov's arc algebra}

\author{Gr\'egoire Naisse 
and Pedro Vaz\\ 
Universit\'e catholique de Louvain \\
Louvain-la-Neuve, Belgium}

\maketitle

\begin{abstract}
  We construct an odd version of Khovanov's arc algebra $H^n$.
  Extending the center to elements that anticommute, we get a subalgebra that is isomorphic to the oddification of the
  cohomology of the $(n,n)$-Springer variety.
  We also prove that the odd arc algebra
can be twisted into an associative algebra. 
\end{abstract}

\tableofcontents

\section{Introduction}

Arc algebras were first introduced by Khovanov in~\cite{khovanov02} to extend
his categorification of
Jones' link invariant~\cite{khovanov00} to tangles. 
One of his main ingredients is a certain Frobenius 
algebra of rank 2, which coincides with the cohomology ring of complex projective space. 
In a follow-up paper~\cite{khovanov04}, Khovanov showed that the arc algebra $H^n$ from~\cite{khovanov02} 
is closely related to the geometry of Springer varieties. Indeed, 
he proved that the center of $H^n$ is isomorphic to the cohomology ring of the $(n,n)$-Springer variety. 
Later on, Chen and Khovanov defined in \cite{chen-khovanov} subquotients of $H^n$ with the aim of giving an 
explicit categorification of the action of tangles on tensor powers of the fundamental representation of 
quantum $\mathfrak{sl}(2)$. To do  so, they categorified the $n$-fold tensor power of the fundamental representation of $U_q(\mathfrak{sl}(2))$ 
together with its weight space decomposition.  
Additionally, Khovanov's arc algebra was further studied in the 
sequence of papers~\cite{brundan-stroppel2,brundan-stroppel1,brundan-stroppel3,brundan-stroppel5,brundan-stroppel4}, 
where most of its interesting representation-theoretic properties were revealed.  

\medskip

Khovanov's arc algebra was later generalized in several directions by several authors.  
It was first generalized to $\mathfrak{sl}_3$-web algebras in~\cite{mackaay-tubbenhauer-pan} and then to 
$\mathfrak{sl}_n$-web algebras in~\cite{mackaay1} and further studied in~\cite{tubbenhauer1,tubbenhauer2}.
In~\cite{sartori}, a version of the arc algebra associated with $\mathfrak{gl}(1|1)$ was constructed, 
motivated by a representation-theoretic 
categorification of the Alexander polynomial.  
In~\cite{ehrig-stroppel1,ehrig-stroppel2}, a Khovanov algebra of type $D$ was introduced, 
in connection with orthosymplectic Lie algebras. 
More recently, a variant of Khovanov's arc algebra based on 
Blanchet's version of Khovanov homology~\cite{blanchet} was constructed in~\cite{ehrig-stroppel-tubbenhauer1}. This was extended in~\cite{ehrig-stroppel-tubbenhauer2} to 
$\mathfrak{gl}_2$-arc and web algebras associated with the variants of Khovanov homology from~\cite{caprau} 
and~\cite{clark-morrison-walker}. 
One of the main properties of the arc/web algebras above is that, except 
the ones from~\cite{ehrig-stroppel1,ehrig-stroppel2} for which it is not known, they all admit topological constructions using 
cobordisms or foams.

\medskip

In~\cite{ors}, Ozsvath, Rasmussen and Szabo used an exterior version of Khovanov's original Frobenius algebra to give an odd version of Khovanov homology.
Odd Khovanov homology agrees with the even (usual) Khovanov homology from~\cite{khovanov00} modulo 2, but they differ over fields 
of characteristic other than 2. 
Moreover, both categorify the Jones polynomial (see for example~\cite{shumakovitch} for further properties).  
Odd Khovanov homology was given a (Bar-Natan style~\cite{barnatan05}) topological set-up  by Putyra in~\cite{putyra14}. 
He introduced the so-called chronological cobordisms, which are cobordisms together with some extra structure related to  
a height function.

\medskip

In this paper we use the set-up from~\cite{ors,putyra14} 
and construct an odd version of Khovanov arc algebra from~\cite{khovanov02}.

\subsection{Sketch of the construction and main results}

The first step in the construction of an odd version of Khovanov's arc algebra is to replace the 
TQFT obtained by the Frobenius algebra from~\cite{khovanov00} by the chronological TQFT from~\cite{ors,putyra14}.  
As explained in~\cite{putyra14}, in order to get a well-defined category of cobordisms one has to choose 
an orientation for each of the local Morse moves. It was proved in~\cite{putyra14} (and in~\cite{ors} in an algebraic set-up) 
that any consistent choice of orientations gives the same link homology.   
This is no longer the case if one tries to extend odd Khovanov homology to tangles. 
In particular, a priori there is no reason for two different choices of orientations to result in isomorphic odd arc algebras. 

\smallskip

Our first result is that we get a family of odd arc algebras indexed by all possible choices. 
We denote $OH^n_C$ the odd arc algebra associated with the choice $C$ of chronological cobordisms.  
As a second result, we get that for all $C$ and all $n\geq2$, the odd arc algebra $OH^n_C$ is nonassociative.
This is done in Section~\ref{sect:hnodd}. 

\smallskip

In Section~\ref{sec:ospringer}, we prove an odd version of Khovanv's results from~\cite{khovanov04}. 
Namely, we prove that the odd center of  $OH^n_C$ is isomorphic to the odd cohomology of the $(n,n)$-Springer variety 
as given by Lauda and Russell in~\cite{laudarussell14}.  
In this paper they constructed an oddification of the cohomology of the Springer variety  
associated to any partition, by replacing polynomial rings and symmetric functions by their odd counterparts. 

\smallskip

As mentioned above, the algebra $OH^n_C$ is not associative. 
But this is not too big a problem, since it is a quasialgebra in the sense of Albuquerque and Majid.  
They defined in~\cite{octonions99} the notion of quasialgebra, which is a nonassociative graded algebra 
with an associator given by a $3$-cocycle coming from a higher structure, that is, a monoidal category. 
In Section~\ref{sec:assoc}, we introduce a grading on $OH^n$ by a groupoid and prove the quasi-associativity of $OH^n_C$, 
the associator depending only on $C$. 
The idea of looking at an odd version of Khovanov's arc algebra as a quasialgebra goes back to the attempts of 
Putyra and Shumakovitch to extend the odd Khovanov homology to tangles. 
An extended discussion over generalised quasialgebras can be found in the unpublished work of Putyra~\cite{putyrapreprint}.
We prove that the associator is a coboundary and thus admits a primitive $\tau$. 
Twisting the multiplication of $OH^n_C$ by this $\tau$ defines an associative algebra which keeps the odd flavor of $OH^n_C$. 
In addition, we prove that all choices of $C$ and of twist lead to isomorphic algebras. 

\medskip

\paragraph{Acknowledgments} We would like the thank Krzysztof Putyra for the discussions and ideas leading to the results of Section~\ref{sec:assoc}. 
We thank also Daniel Tubbenhauer for comments on a previous version of this paper.  
G.N. is a Research Fellow of the Fonds de la Recherche Scientifique - FNRS, under Grant no.~1.A310.16.
P.V. was supported by the Fonds de la Recherche Scientifique - FNRS under Grant no.~J.0135.16.

\section{Reminders}

To begin, we recall the three main constructions we will use: the Khovanov arc algebra, the TQFT from odd Khovanov homology and the oddification of the cohomology of the Springer varieties.

\subsection{Khovanov's arc algebra}
\label{sec:khH}

As the construction in this paper follows Khovanov's original setup from~\cite{khovanov02}, we give below a sketch of the construction of the arc algebra $H^n$.

\paragraph{Crossingless matchings} Let $B^n$ be the set of crossingless matchings of $2n$ points, that is, all ways one can pair $2n$ points on a horizontal line by non-crossing arcs placed below this line. For $b \in B^n$, we denote by $W(b)$ the reflection of $b$ across the horizontal line and by $W(b)a$ the gluing of $W(b)$ on the top of $a \in B^n$. It is clear that $W(b)a$ is a disjoint union of circles. 
 For example, we have in $B^2$:
\begin{align*}
a &=\ {\Cmda}\ , & b &=\  {\Cmdb}\ , \\
W(b) &=\  {\CmdWb}\ , & W(b)a &=\  {\xy (0,-1.6)*{\Cmda}; (0,1.6)*{\CmdWb};  \endxy}\ .
\end{align*}
We also write $W(d)cW(b)a$ for the concatenation of $W(d)c$ on top of $W(b)a$, which is the disjoint union of $W(d)c$ and $W(b)a$, see for example (\ref{eq:hnpicture}).

\paragraph{Contraction cobordisms} Given a diagram $W(c)bW(b)a$, we construct a cobordism
\begin{equation}
S_{cba} : W(c)bW(b)a \rightarrow W(c)a \label{eq:Scba}
\end{equation}
 by contracting the arcs of $b$ with their symmetric  counterparts in $W(b)$ by saddles:
\begin{equation*}
{\xy
(0,7.5)*{};
(10,7.5)*{}
**\crv{ (0,2.5) & (10,2.5)};
(0,-7.5)*{};
(10,-7.5)*{}
**\crv{ (0,-2.5) & (10,-2.5)};
(-2.5,7.5)*{}; (12.5,7.5)*{} **\dir{--}; 
(-2.5,-7.5)*{}; (12.5,-7.5)*{} **\dir{--}; 
\endxy} \quad  \xrightarrow[\text{by a saddle}]{\quad\text{arcs contraction}\quad} \quad
{\xy
(0,-7.5)*{};
(0,7.5)*{}
**\crv{ (0,-7.5) & (0,7.5)};
(10,-7.5)*{};
(10,7.5)*{}
**\crv{ (10,-7.5) & (10,7.5)};
(-2.5,7.5)*{}; (12.5,7.5)*{} **\dir{--}; %
(-2.5,-7.5)*{}; (12.5,-7.5)*{} **\dir{--}; %
\endxy}
\quad\overset{\text{rescaling}}{\simeq}\quad
{\xy
(-2.5,0)*{}; (12.5,0)*{} **\dir{--}; %
\endxy}
\end{equation*} 
This gives a surface with one saddle point for each arc in $b$. Therefore, $S_{cba}$ has (minimal) Euler characteristic $-n$, is embedded in $\R^2 \times [0,1]$ and is unique up to isotopy. Indeed, contracting the symmetric arcs in two different orders gives rise to homeomorphic surfaces and thus, the construction does not depend on any choice. Moreover, $S_{cba}$ can be given a canonical orientation. The picture to keep in mind is:
\begin{equation} \label{eq:hnpicture}
{\xy (0,2.5)*{\Cmtb}; 
(-24,5)*{}; (16,5)*{} **\dir{--}; 
(-20,10)*{W(c)};
  (0,8.25)*{\CmtWa}; 
(-20,0)*{bW(b)};
 (0,-2.5)*{\CmtWb};
(-24,-4.5)*{}; (16,-4.5)*{} **\dir{--}; 
(-20,-9.5)*{a};
 (0,-5.75)*{\Cmtc};  \endxy}
\quad \xrightarrow{\quad S_{cba} \quad} \quad 
{\xy (0,8.35)*{\CmtWa}; (0,-5.75)*{\Cmtc};
(-12.5,-4.45)*{}; (-12.5,5)*{} **\dir{-};
(-7.5,-4.45)*{}; (-7.5,5)*{} **\dir{-};
(-2.5,-4.45)*{}; (-2.5,5)*{} **\dir{-};
(2.5,-4.45)*{}; (2.5,5)*{} **\dir{-};
(7.5,-4.45)*{}; (7.5,5)*{} **\dir{-};
(12.5,-4.45)*{}; (12.5,5)*{} **\dir{-};
(-16,5)*{}; (16,5)*{} **\dir{--}; 
(-16,-4.5)*{}; (16,-4.5)*{} **\dir{--}; 
   \endxy}
\quad \simeq \quad
{\xy (0,3.05)*{\CmtWa};
(20,5)*{W(c)};
(-16,0)*{}; (20,0)*{} **\dir{--}; 
(20,-5)*{a};
 (0,-1.55)*{\Cmtc};
   \endxy}
\end{equation}

\paragraph{Frobenius algebra} Let $A := \Z[X]/(X^2)$ be the $\Z$-graded abelian group with grading given by $\deg 1 = -1$ and $\deg X = 1$. This group possesses the structure of a $\Z$-algebra when equipped with the polynomial multiplication. However, notice that this multiplication has degree $1$ and does not gives a graded algebra structure.
 We turn $A$ into a Frobenius algebra by defining a trace,
\begin{align*}
\tr : A \rightarrow \Z,\qquad \tr(1) = 0,\quad \tr(X) = 1.
\end{align*}
As the trace is non-degenerate, this defines a TQFT, 
$$F : 2Cob \rightarrow \Z\text{-grmod},$$
 where $\Z$-grmod is the category of $\Z$-graded  free $\Z$-modules with finite rank and $2Cob$ the category of oriented cobordisms between 1-manifolds, see \cite{kock03}.   From now on, unless stated otherwise, we will always assume that graded means $\Z$-graded.

Thus, we get $F(W(b)a) \simeq A^{\otimes |W(b)a|}$ for $|W(b)a|$ the number of circle components in $W(b)a$. Moreover, the comultiplication map is explicitly given by
\begin{align*}
\Delta : A \rightarrow A \otimes A,\qquad \Delta(1) = X \otimes 1 + X \otimes 1,\quad \Delta(X) = X \otimes X.
\end{align*}

Applying this TQFT on the cobordism (\ref{eq:Scba}), we get a morphism,
\begin{equation}F(W(c)b) \otimes_\Z F(W(b)a) \simeq F(W(c)bW(b)a) \xrightarrow{F(S_{cba})} F(W(c)a). \label{eq:cobind}\end{equation}
This morphism has degree $n$ since the multiplication and comultiplication maps in $A$ have degree $1$ and $S_{cba}$ possesses $n$ saddle points.

\paragraph{Arc algebra} Define the graded abelian groups
\begin{align*}
H^n &:= \bigoplus_{a,b \in B^n} b(H^n)a, &  b(H^n)a &:= F(W(b)a)\{n\},
\end{align*}
where the notation $\{n\}$ means that we shift the degree up by $n$.
Therefore,  as the maximal number of components in $W(b)a$ is $n$, every element $x \in F(W(b)a)$ has degree $\deg x \ge -n$ and thus, $H^n$ is a $\Z_+$-graded group.
In order to define a multiplication in $H^n$, we first let the product  $d(H^n)c \otimes_\Z b(H^n)a \rightarrow H^n$ be zero whenever $c \ne b$. Then, for the other cases, we define the multiplication such that the diagram
$$\xymatrix{
c(H^n)b \otimes_\Z b(H^n)a \ar[d]_{\simeq} \ar[r] &  c(H^n)a \\
 F(W(c)b) \otimes_\Z F(W(b)a) \{2n\} \ar[r]_-{(\ref{eq:cobind})}&  F(W(c)a)\{n\} \ar[u]_{\simeq}
}$$
commutes. The associativity of the multiplication follows from the fact that $F$ is a TQFT. Moreover, the sum $\sum_{a\in B^n} 1_a$, with $1_a$ the unit in $a(H^n)a \simeq A^{\otimes n}\{n\}$, is a unit for $H^n$. All of this sums up to: 

\begin{prop}\emph{(Khovanov, \cite[Proposition 1]{khovanov02})}
The structures above make $H^n$ into a $\Z_+$-graded associative unital $\Z$-algebra.
\end{prop}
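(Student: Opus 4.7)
The plan is to verify the four claims in sequence: the $\Z_+$-grading on $H^n$, that the multiplication is homogeneous of degree zero and well-defined, associativity, and that $\sum_{a\in B^n}1_a$ is a two-sided unit.

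\textbf{Grading and well-definedness.} Since $|W(b)a|\le n$, any homogeneous $x\in F(W(b)a)\cong A^{\otimes|W(b)a|}$ satisfies $\deg x\ge -|W(b)a|\ge -n$, so after the shift $\{n\}$ every homogeneous element of $H^n$ lies in nonnegative degree. Tracking degrees through the defining square: an element of degree $d$ in $c(H^n)b\otimes b(H^n)a$ corresponds, via the combined shift $\{2n\}$, to degree $d-2n$ in $F(W(c)b)\otimes F(W(b)a)$; the map $F(S_{cba})$ has degree $n$, since its $n$ saddles each contribute $+1$ (both $\mu$ and $\Delta$ on $A$ have degree $1$), so its image has degree $d-n$ in $F(W(c)a)$; the final shift $\{n\}$ then lifts this to degree $d$ in $c(H^n)a$. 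Well-definedness is immediate because $S_{cba}$ is determined up to isotopy rel boundary, and the TQFT $F$ factors through isotopy classes.

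\textbf{Associativity.} This is the main step. Given $w\in d(H^n)c$, $x\in c(H^n)b$, $y\in b(H^n)a$, both $(wx)y$ and $w(xy)$ are computed by applying $F$ to composite cobordisms from the disjoint union $W(d)c\sqcup W(c)b\sqcup W(b)a$ to $W(d)a$. Explicitly, $(wx)y$ uses $S_{dba}\circ(S_{dcb}\sqcup\id_{W(b)a})$ (contract the $c,W(c)$-arcs first, then the $b,W(b)$-arcs), while $w(xy)$ uses $S_{dca}\circ(\id_{W(d)c}\sqcup S_{cba})$ (reverse order). Each composite consists of $2n$ saddles at spatially disjoint locations in $\R^2\times[0,1]$, and their time-coordinates can be exchanged by an ambient isotopy rel boundary. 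Hence the two composite cobordisms are isotopic, and TQFT functoriality gives $(wx)y=w(xy)$.

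\textbf{Unit.} For each $a\in B^n$, the 1-manifold $W(a)a$ is a disjoint union of exactly $n$ bigon circles, one per arc of $a$, so $a(H^n)a\cong A^{\otimes n}\{n\}$, and I set $1_a:=1^{\otimes n}$, the unit of $A^{\otimes n}$. To check that $\sum_a 1_a$ is a right identity, consider $x\in c(H^n)a$: the product $x\cdot 1_a$ equals $F(S_{caa})(x\otimes 1^{\otimes n})$. Each of the $n$ saddles of $S_{caa}$ merges one bigon of $W(a)a$, which carries the label $1\in A$, into a circle of $W(c)a$; since multiplication by $1\in A$ is the identity, the effect on the corresponding tensor factor of $x$ is trivial, so $x\cdot 1_a=x$. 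The left identity follows symmetrically. The principal obstacle throughout is the associativity step, which reduces via the functoriality of $F$ to the elementary topological fact that saddles at spatially disjoint loci commute in time.
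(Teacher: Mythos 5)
Your proof is correct and fills in the details of the same argument that the paper only sketches (associativity from TQFT functoriality applied to isotopic composite cobordisms, the unit being $\sum_a 1^{\otimes n}$, and the grading from $|W(b)a|\le n$ with the $\{n\}$ shift). The key point you correctly identify — that the two composites $S_{dba}\circ(S_{dcb}\sqcup\id)$ and $S_{dca}\circ(\id\sqcup S_{cba})$ differ only in the time ordering of saddles located in disjoint vertical strips of the stacked diagram, hence are isotopic rel boundary — is exactly what the paper means by "the associativity of the multiplication follows from the fact that $F$ is a TQFT."
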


\subsection{Odd Khovanov homoloy}\label{sec:oddkh}

Ozsvath, Rasmussen and Szabo constructed in \cite{ors} an odd version of Khovanov homology using some ``projective TQFT" replacing $F$ (projective meaning here that it is well-defined only up to sign). Putyra extended in \cite{putyra14} the work of Bar-Natan for Khovanov homology~\cite{barnatan05} by giving a topological framework for the odd homology: the chronological cobordisms. In addition, Putyra's work allows the construction of the odd Khovanov homology using a well-defined functor. In this subsection, we mainly follow the exposition in \cite{putyra14}.

\paragraph{Chronological cobordims} Recall that a chronological $2$-cobordism is a $2$-cobordism equipped with a chronology, that is, a Morse function with one critical point at each critical level. Moreover, at each critical point, we choose an orientation of the space of unstable directions in the gradient flow induced by the chronology. We write that choice by an arrow.
These chronological $2$-cobordisms, taken up to isotopy which preserves the orientations and the chronology, form a category with composition given by gluing. We denote it by $2ChCob$. Every chronological $2$-cobordism can be built from the six elementary chronological $2$-cobordisms:
\begin{align}\label{eq:elemchcob}
{\cobcup}
 &\quad ,&
{\cobmergepos}
 &\quad ,&
{\cobsplitpos}
 &\quad ,&
{\cobcappos}
 &\quad ,&
{\cobcapneg}
&\quad ,&
{\cobtwist}
\end{align}
which are called respectively a birth, a merge, a split, a positive death, a negative death and a twist. As we are only interested in chronological $2$-cobordisms, we will forget the prefix $2$-.


\paragraph{The odd functor} We describe the functor $OF : 2ChCob \rightarrow \Z$-grmod from \cite{ors}. Morally, objects of $2ChCob$ are disjoint unions of circles. For $S$ such a union we denote by $V(S)$ the free abelian group generated by the components of $S$ with a grading such that each generator has degree $2$. We define 
$$OF(S) := \Ext^* V(S)\{-|S|\},$$
with $\Ext^* V(S)$ being the exterior algebra generated by the elemens of $V(S)$ and $|S|$ the number of components. 

\medskip 

We now define the functor on each of the elementary cobordisms (\ref{eq:elemchcob}). Let $S_1$ and $S_2$ be objects of $2ChCob$ with $S_2$ containing one circle more than $S_1$. For a birth of a circle from $S_1$ to $S_2$, there is a canonical inclusion $V(S_1) \subset V(S_2)$ (the new generator being the circle cupped by the birth cobordism). This induces a morphism
$$OF\left(\cobcup\right) :  \Ext^* V(S_1) \xrightarrow{\subset}  \Ext^* V(S_2), \quad v \mapsto 1 \wedge v.$$

Consider a merge of two circles $a_1,a_2$ in $S_2$ to a single one in $S_1$ with an arrow $a_1 \chemarrow a_2$. The arrow represent one of the two possible choices of orientation of the merge, the other being denoted $a_2 \chemarrow a_1$. There is an isomorphism of groups $V(S_1) \simeq V(S_2)/\{a_1-a_2\}$ and thus the canonical projection $V(S_2) \rightarrow V(S_2)/ \{a_1 - a_2\}$ induces a morphism
$$OF\left(\xy
(0,0)*{\cobmergepos};
(-5,-8)*{a_1};
(5.5,-8)*{a_2};
\endxy\right) :  \Ext^* V(S_2)  \rightarrow  \Ext^*  \left(V(S_2)/ \{a_1 - a_2\}\right)  \simeq \Ext^* V(S_1). $$
It is not hard to see that the choice of orientation does not change the result in this case and we get
$$OF\left(\xy
(0,0)*{\cobmergepos};
(-5,-8)*{a_1};
(5.5,-8)*{a_2};
\endxy\right) =
OF\left(\xy
(0,0)*{\cobmergeneg};
(-5,-8)*{a_1};
(5.5,-8)*{a_2};
\endxy\right).$$

Now say we have a split sending $a \in S_1$ to $b_1 \chemarrow b_2$ in $S_2$. Again, there is a natural identification $V(S_2) \simeq V(S_1)/\{b_1 - b_2\}$, but now we also use the isomorphism 
$$\Ext^*  \left(V(S_1)/\{b_1 - b_2\}\right) \simeq (b_1 - b_2) \wedge \Ext^* V(S_1)$$
to get a morphism
$$OF\left(\xy
(0,0)*{\cobsplitpos};
(-5,8)*{b_1};
(5.5,8)*{b_2};
\endxy\right) : \Ext^* V(S_2) \simeq (b_1-b_2)\wedge \Ext^* V(S_1) \xrightarrow{\subset}  \Ext^* V(S_1). $$
As a matter of fact, this morphism is easily computable by replacing the occurences of $a$ by $b_1$ (or $b_2$) and multiplying by $(b_1-b_2)$. For example, $1$ is sent to $b_1 - b_2$ and $a$ is sent to $(b_1-b_2)\wedge b_1 = b_1 \wedge b_2$. Notice also that reversing the orientation changes the sign of the morphism
$$OF\left(\xy
(0,0)*{\cobsplitpos};
(-5,8)*{b_1};
(5.5,8)*{b_2};
\endxy\right) 
=
-OF\left(\xy
(0,0)*{\cobsplitneg};
(-5,8)*{b_1};
(5.5,8)*{b_2};
\endxy\right).$$

Suppose we have a positive (in other words, anticlockwise oriented) death of $a \in S_2$. We associate to it the morphism given by contraction with the dual of $a_1$
$$OF\left(\xy
(0,0)*{\cobcappos};
(0,-8)*{a_1};
\endxy\right) : \Ext^* V(S_2) \rightarrow \Ext^* V(S_1), \quad v \mapsto a_1^*(v).
$$
The negative one is given by the opposite.

Finally, the twist is given by a the permutation of the corresponding terms
$$OF\left(\xy
(0,0)*{\cobtwist};
(-5,-9)*{a_1};
(5.5,-9)*{a_2};
(-5,9)*{a_1};
(5.5,9)*{a_2};
\endxy\right) :  \Ext^* V(S_2)  \rightarrow  \Ext^*  \left(V(S_2)\right) , \quad \begin{cases}
a_1 &\mapsto a_2, \\
a_2 &\mapsto a_1, \\
a_1 \wedge a_2 &\mapsto a_2 \wedge a_1.
\end{cases} $$

\begin{rem}\label{rem:mergeorient}
Since changing the orientations of the merges does not change the result of the functor, we will ignore them in our discussion.
\end{rem}

\subsection{Odd cohomology of the Springer varieties}

First, let us recall the definition of a Springer variety.

\begin{definition}
Let $\lambda = (\lambda_1, \dots , \lambda_m)$ be a partition of $m$, $E_m$ be a complex vector space of dimension $m$ and $z_\lambda : E_m \rightarrow E_m$  be a nilpotent linear endomorphism with $|\lambda|$ nilpotent Jordan blocks of size $\lambda_1, \dots, \lambda_m$. The Springer variety for the partition $\lambda$ is 
$$\mathfrak B_{\lambda} := \{\text{complete flags in $E_m$ stabilized by $z_\lambda$}\}.$$
\end{definition}
The cohomology ring of $\mathfrak B_{\lambda}$ can be computed by quotienting the polynomial ring in $m$ variables by the ideal of partially symmetric functions (see \cite{conciniprocesi} for more details). Write $(n,n)$ for the partition $\lambda = (n,n)$ of $2n$.

\begin{thm}\emph{(Khovanov, \cite[Theorem 1.1]{khovanov04})} \label{thm:isozhnhbnn}
There is an isomorphism of graded algebras
$$Z(H^n) \simeq H(\mathfrak B_{n,n}, \Z).$$
\end{thm}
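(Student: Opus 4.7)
The plan is to exhibit explicit central elements $X_1,\ldots,X_{2n}\in Z(H^n)$, check that they satisfy precisely the defining relations of the Tanisaki/De Concini--Procesi presentation of $H(\mathfrak{B}_{n,n},\Z)$ recalled in~\cite{conciniprocesi}, and then conclude by matching graded ranks.

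First, for each $i\in\{1,\ldots,2n\}$, I would construct $X_i\in H^n$ componentwise. For every $a\in B^n$, the endpoint $i$ lies on a unique circle $C_i(a)$ of $W(a)a$; set $(X_i)_a\in a(H^n)a\simeq A^{\otimes|W(a)a|}\{n\}$ to be the simple tensor with $X$ placed in the factor indexed by $C_i(a)$ and $1$ in all the other factors, and put $X_i:=\sum_{a\in B^n}(X_i)_a$. To see that $X_i\in Z(H^n)$, I would verify that for every pair $a,c\in B^n$ and every $y\in c(H^n)a$, the two products $X_i\cdot y$ and $y\cdot X_i$ are both computed by inserting $X$ on the same circle of the contraction cobordism from (\ref{eq:Scba}); the equality reduces to a local Frobenius-algebra identity in $A$ at each saddle, expressing that $X$ behaves correctly under both multiplication and comultiplication.

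Second, I would collect the relations. Since $X^2=0$ in $A$, each $X_i^2=0$. Moreover, whenever two endpoints $i,j$ are joined by an arc in $a$, the circles $C_i(a)$ and $C_j(a)$ coincide, so $(X_i)_a=(X_j)_a$; propagating this identification through the TQFT as $a$ ranges over $B^n$ yields the full Tanisaki ideal inside $\Z[x_1,\ldots,x_{2n}]$ that cuts out $H(\mathfrak{B}_{n,n},\Z)$. The assignment $x_i\mapsto X_i$ then descends to a graded ring homomorphism
$$\phi : H(\mathfrak{B}_{n,n},\Z) \longrightarrow Z(H^n), \qquad x_i \longmapsto X_i.$$

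To finish I would prove $\phi$ is an isomorphism in two stages. For surjectivity, take a central element $z=\sum_a z_a$; the conditions $y\cdot z_a=z_c\cdot y$ for every $y\in c(H^n)a$ and every $a,c\in B^n$ force the collection $\{z_a\}$ to be coherent across all crossingless matchings, and a combinatorial analysis of this coherence shows that $z$ must be a $\Z$-linear combination of monomials in the $X_i$'s. Bijectivity then follows by matching graded ranks: both $H(\mathfrak{B}_{n,n},\Z)$ and $Z(H^n)$ are free graded $\Z$-modules whose graded ranks can be computed independently and both agree with the $q$-Catalan number $\tfrac{1}{n+1}\binom{2n}{n}$ (as a polynomial in $q$). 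The main obstacle is the surjectivity step: controlling how the coherence conditions on $\{z_a\}$ constrain the multi-circle Frobenius-algebra data on each diagonal summand $a(H^n)a$ so that it is in fact built from the $X_i$'s. This will require careful bookkeeping of how the Frobenius multiplication and comultiplication interact with the combinatorics of the surfaces $W(c)bW(b)a$ as $c$ varies by simple local moves from $a$.
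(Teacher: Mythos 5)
The first half of your plan — defining the central elements $X_i$ by placing $X$ on the circle through endpoint $i$ on each $a(H^n)a$, checking they satisfy the De Concini--Procesi relations, and getting a map $\phi : H(\mathfrak{B}_{n,n},\Z)\to Z(H^n)$ — is exactly how Khovanov begins (and how the present paper mimics it in the odd case with the map $h_0$). The genuine gap is in how you try to close the argument, and it shows up in two places.

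First, your surjectivity step is not an argument but a restatement of the difficulty: you say ``a combinatorial analysis of this coherence shows that $z$ must be a $\Z$-linear combination of monomials in the $X_i$'s'' and then concede this is ``the main obstacle.'' Khovanov does not attack the coherence conditions directly. He introduces an auxiliary topological space $\widetilde S=\bigcup_{a\in B^n}S_a\subset (S^2)^{2n}$, where $S_a$ is the sub-product of $2$-spheres cut out by equalizing coordinates joined by an arc of $a$; here $a(H^n)a\cong H(S_a)$, $b(H^n)a\cong H(S_b\cap S_a)$, and the multiplication-by-idempotent maps that define $Z(H^n)$ become the restriction maps in cohomology. A Mayer--Vietoris induction over a total order on $B^n$ (the paper redoes the identical argument with tori $\widetilde T$ for the odd case in Propositions~\ref{prop:exactseq}--\ref{prop:epi}) shows that $H(\widetilde S)$ surjects onto $Z(H^n)$, and the ambient inclusion $\widetilde S\subset (S^2)^{2n}$ shows $H(\widetilde S)$ is generated by the images of the $X_i$. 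This geometric intermediary is the missing idea, not a convenience: it is what turns ``coherence of $\{z_a\}$'' into a tractable cohomological statement. Second, your rank count is wrong. The total rank of $H(\mathfrak{B}_{n,n},\Z)$ (and of $Z(H^n)$) is $\binom{2n}{n}$, not the Catalan number $\tfrac{1}{n+1}\binom{2n}{n}$; the Catalan number is $|B^n|$, the number of crossingless matchings, which is a different invariant. (Compare~\cite[Corollary~3.9]{laudarussell14} and Proposition~\ref{prop:grouprank} of this paper for the odd count, which agrees.) Since the rank comparison is load-bearing in your plan, this error would leave the bijectivity unproven even if the surjectivity sketch were made precise.
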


Lauda and Russell constructed in \cite{laudarussell14} an oddification of the cohomology of the Springer varieties, denoted $OH(\mathfrak B_\lambda, \Z)$. Like the usual cohomology is obtained as a quotient of the polynomials by the
partially symmetric functions, they constructed $OH(\mathfrak B_\lambda, \Z)$ as a quotient of the ring $OPol_{m}$ of odd polynomials 
\begin{align*}
OPol_{m} &:= \frac{\Z\langle x_1, \dots, x_{m}\rangle}{\langle x_ix_j + x_jx_i = 0 \text{ for all } i\ne j \rangle}, &\deg(x_i) &= 2,
\end{align*}
by some ideal.
Since we only need the case $m=2n$ and $\lambda = (n,n)$ for our discussion, we restrict to this case from now on.

\begin{definition}{(Lauda \& Russell, \cite{laudarussell14})}\label{def:oddcoh}
The odd cohomology of the $(n,n)$-Springer variety is the quotient
$$OH(\mathfrak{B}_{n,n}, \Z) := OPol_{2n}/OI_n$$
where $OI_n$ is the left ideal generated by the set of odd partially symmetric functions
\begin{align*}
OC_n := \left \{ \epsilon_r^S := \sum_{1\le i_1 < \dots < i_r \le 2n} x_{i_1}^S\dots x_{i_r}^S \bigg| 
\parbox{15em}{$k\in \{1,2,\dots,n\}$, $|S| = n+k$,\\ \centering{$ r \in \{n-k+1, n-k, \dots,n+k\}$}} 
\right\},
\end{align*}
for all $S$ ordered subset of $\{1, \dots, 2n\}$ of cardinality $n+k$ and
$$x_{i_j}^S := \begin{cases}
0, &\text{ if } i_j \notin S,\\
(-1)^{S(i_j)-1}x_{i_j}, &\text{otherwise,}
\end{cases}$$
with $S(i_j)$ the position of $i_j$ in $S$.
\end{definition}

In general, the odd cohomology of a Springer variety is only a module over the odd polynomials. However, in case $\lambda = (n,n)$, it is a graded algebra. This is due to the fact that, thanks to \cite[Lemma~3.6]{laudarussell14}, $x_i^2 \in OI_n$ for all $i$. Thus, $OI^n$ is a $2$-sided ideal. As a matter of fact, $OH(\mathfrak{B}_{n,n}, \Z)$ also possesses the structure of a superalgebra with superdegree given by dividing the degree by~$2$. Finally, by construction, the algebra $OH(\mathfrak{B}_{n,n}, \Z)$ is isomorphic to $H(\mathfrak{B}_{n,n}, \Z)$ modulo $2$.

\begin{example}
$OH(\mathfrak{B}_{2,2}, \Z)$ is given by the odd polynomials in 4 variables $x_1,x_2,x_3,x_4$ quotiented by the (not minimal) relations:
\begin{align*}
x_1 - x_2 + x_3 - x_4 &= 0, \\
-x_ix_j+x_ix_k-x_jx_k &= 0, &\forall i<j<k \in [1,4] , \\
-x_1x_2+x_1x_3-x_1x_4-x_2x_3+x_2x_4-x_3x_4 &= 0, \\
x_ix_jx_k &= 0, &\forall i<j<k \in [1,4] , \\
-x_1x_2x_3+x_1x_2x_4-x_1x_3x_4+x_2x_3x_4 &= 0, \\
x_1x_2x_3x_4 &= 0.
\end{align*}
\end{example}


\section{Odd arc algebra}
\label{sect:hnodd}

In this section, we construct an odd version of the Khovanov arc algebra $H^n$. Therefore, we will closely follow the construction from above, replacing the TQFT $F$ by the odd functor $OF$ from Section~\ref{sec:oddkh}.
First, we define for all $n\ge 0$ the following graded abelian groups,
\begin{align*}
OH^n &:= \bigoplus_{a,b \in B^n} b(OH^n)a, &  b(OH^n)a &:= OF(W(b)a)\{n\},
\end{align*}
such that $OH^n$ is $\Z_+$-graded.

The first difficulty we encounter when we try to define a multiplication as in Section~\ref{sec:khH} is that we have to choose a chronology and signs for the splits. In the odd Khovanov homology from \cite{ors}, the signs are forced by the requirement that the cube of resolutions anticommutes (all possible choices  leading to isomorphic cubes). However, in our case, there is no condition other than that the cobordisms must be embedded in $\R^2 \times [0,1]$. This means we have to consider all possible choices.

\paragraph{Contraction cobordisms}
For each $a,b,c \in B^n$, there is a canonical cobordism with minimal number of critical points (up to homeomorphism and embedded in $\R^2 \times [0,1]$) from the diagram $W(c)bW(b)a$ to $W(c)a$. This corbordism is given by contracting the arcs of $b$ with their symmetric counterparts in $W(b)$, as in the definition of $H^n$ in Section~\ref{sec:khH}. To be able to apply $OF$, we need to define a chronological cobordism and there are several ways to do so:
\begin{itemize}
\item We have
to choose a chronology, in other words, we have to choose an order in which we contract the symmetric arcs of $bW(b)$, taking
care of never contracting two arcs before the one surrounding them.
This is required to get an embedded surface.
\item We have to give an orientation for the critical points, especially for the splits (we do not need to orient the merges by Remark~\ref{rem:mergeorient}). We express the two possibilities by an arrow:
\begin{align*}
{\xy
(5,-6)*{a};
(0,7.5)*{};
(10,7.5)*{}
**\crv{ (0,2.5) & (10,2.5)};
(5,5.5)*{a};
(0,-7.5)*{};
(10,-7.5)*{}
**\crv{ (0,-2.5) & (10,-2.5)};
(-2.5,7.5)*{}; (12.5,7.5)*{} **\dir{--}; 
(-2.5,-7.5)*{}; (12.5,-7.5)*{} **\dir{--}; 
\endxy}  &\qquad \longrightarrow \qquad
{\xy
(3,-5.5)*{b_1};
(13,-5.5)*{b_2};
(0,-7.5)*{};
(0,7.5)*{}
**\crv{ (0,-7.5) & (0,7.5)};
(5,0)*{\larrowfill{14pt}};
(10,-7.5)*{};
(10,7.5)*{}
**\crv{ (10,-7.5) & (10,7.5)};
(-2.5,7.5)*{}; (12.5,7.5)*{} **\dir{--}; 
(-2.5,-7.5)*{}; (12.5,-7.5)*{} **\dir{--}; 
\endxy}
&\text{ or }&&
{\xy
(5,-6)*{a};
(0,7.5)*{};
(10,7.5)*{}
**\crv{ (0,2.5) & (10,2.5)};
(5,5.5)*{a};
(0,-7.5)*{};
(10,-7.5)*{}
**\crv{ (0,-2.5) & (10,-2.5)};
(-2.5,7.5)*{}; (12.5,7.5)*{} **\dir{--}; 
(-2.5,-7.5)*{}; (12.5,-7.5)*{} **\dir{--}; 
\endxy}  &\qquad\longrightarrow\qquad
{\xy
(3,-5.5)*{b_1};
(13,-5.5)*{b_2};
(0,-7.5)*{};
(0,7.5)*{}
**\crv{ (0,-7.5) & (0,7.5)};
(5,0)*{\rarrowfill{14pt}};
(10,-7.5)*{};
(10,7.5)*{}
**\crv{ (10,-7.5) & (10,7.5)};
(-2.5,7.5)*{}; (12.5,7.5)*{} **\dir{--}; 
(-2.5,-7.5)*{}; (12.5,-7.5)*{} **\dir{--}; 
\endxy}
\end{align*}
meaning that we split the component $a$ in two components $b_1, b_2$ with the orientation $b_2 \chemarrow b_1$ in the first case and $b_1 \chemarrow b_2$ in the second one.
\end{itemize}

\begin{rem}\label{rem:standardchoice}
There is always at least one possible choice: it suffices to go through the end points of $b$ from left to right and contracting whenever we encounter an arc which was not already contracted, then orienting the splits from left to right (i.e. putting an arrow from the component passing through the left point to the one passing through the right point). 
\end{rem}

We now assume that for each triplet $a,b,c \in B^n$ we have chosen a chronological cobordism. We write it $C_{cba}$ and denote the collection all of them by $C := \{C_{cba} | a,b,c \in B^n\}$. In addition, we write $\mathcal C^n$ for the set of all possible choices of such a set $C$. 

\paragraph{Multiplication} Like in the even case, we let the multiplication 
$$d(OH^n)c \otimes_\Z b(OH^n)a \rightarrow \{0\} \subset OH^n$$
 be zero for $c \ne b$. We define the multiplication
$c(OH^n)b \otimes_\Z b(OH^n)a \rightarrow c(OH^n)a$
using the morphism $OF(C_{cba})$. More precisely, there is a morphism
\begin{equation}
OF(W(c)b) \otimes_\Z OF(W(b)a) \rightarrow OF(W(c)bW(b)a) : (x,y) \mapsto x \wedge y \label{eq:wedge}
\end{equation}
induced by the inclusions $W(c)b \subset W(c)bW(b)a$ and $W(b)a \subset W(c)bW(b)a$. We compose it with $OF(C_{cba})$ to obtain the multiplication by making the following diagram commutes:
$$ \xymatrix{
c(OH^n)b \otimes_\Z b(OH^n)a \ar[d]_{\simeq}  \ar[rrr] &&& c(OH^n)a \\
 OF(W(c)b) \otimes_\Z OF(W(b)a) \ar[r]_-{(\ref{eq:wedge})}  & OF(W(c)bW(b)a) \ar[rr]_-{OF(C_{cba})} && \ar[u]_{\simeq} OF(W(c)a).
}$$
This map is degree preserving thanks to the minimality hypothesis on the Euler Characteristic of $C_{cba}$ which guarantees that the degree of $OF(C_{cba})$ is $n$.

\paragraph{Unit} We write $1_a$ for the unit 
 in the exterior algebra $\Ext^* V(W(a)a)$ and we easily check that the sum $\sum_{a \in B^n} 1_a$ is a unit for the multiplication defined in the paragraph above. We also write ${_b1_a}$ for the unit in the exterior algebra $\Ext^* V(W(b)a)$ (notice ${_b1_a}$ is not an idempotent). 

\begin{prop}\label{prop:nonassoc}
For $n \ge 2$ and any choice $C \in \mathcal C^n$, the multiplication defined above is not associative.
\end{prop}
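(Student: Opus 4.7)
The plan is to exhibit, for every choice $C \in \mathcal{C}^n$, an explicit triple $(x, y, z)$ with $(xy)z \neq x(yz)$ in $OH^n_C$. I first produce such a triple in $OH^2_C$, then promote it to $n \geq 3$ by a disjoint-union argument.

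For $n = 2$, write $B^2 = \{a, b\}$, with $a$ the parallel-arcs matching $(1,2)(3,4)$ and $b$ the nested-arcs matching $(1,4)(2,3)$. A direct inspection shows that $W(a)a$ has two circles, denoted $\alpha_1, \alpha_2$, while $W(a)b$ and $W(b)a$ each have one. I take $x := \alpha_1 \in a(OH^2)a$, $y := 1 \in a(OH^2)b$, and $z := 1 \in b(OH^2)a$, so both $(xy)z$ and $x(yz)$ land in $a(OH^2)a \simeq \Ext^* V(W(a)a)$. The relevant cobordisms are $C_{aab}$, $C_{aba}$ and $C_{aaa}$. Of these, $C_{aab}$ and $C_{aaa}$ consist entirely of merge saddles and are thus insensitive both to orientations and to the order in which the commuting merges are performed. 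The cobordism $C_{aba}$ has two saddles, and the embedding in $\R^2 \times [0,1]$ forces a specific chronology: the inner oval must be contracted first (a merge) before the outer one (a split). So the only residual freedom in $C$ affecting our triple is the orientation of the single split in $C_{aba}$. Applying the explicit formulas for $OF$ on merges and splits from Section~\ref{sec:oddkh}, together with the wedge embedding \eqref{eq:wedge}, one finds $(xy)z = \pm\, \alpha_1 \wedge \alpha_2$ and $x(yz) = \mp\, \alpha_1 \wedge \alpha_2$, with opposite signs independently of the orientation chosen. Hence $(xy)z - x(yz) = \pm 2\, \alpha_1 \wedge \alpha_2 \neq 0$ in $a(OH^2)a$, for every $C \in \mathcal{C}^2$.

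For $n \geq 3$, I append $n - 2$ parallel arcs on the right of $a$ and $b$ to obtain matchings $a^{\mathrm{ext}}, b^{\mathrm{ext}} \in B^n$. Each relevant cobordism among these extended matchings decomposes as the corresponding $n = 2$ cobordism on the leftmost four points, disjoint union with the identity cobordism (or a sequence of independent merges) on the right circles, up to the ordering of disjoint commuting saddles. Since $OF$ is monoidal for disjoint unions and the extra saddles are all merges on disjoint circles, the multiplication on this sub-configuration reduces to the multiplication in $OH^2$; so the extended triple $(x^{\mathrm{ext}}, y^{\mathrm{ext}}, z^{\mathrm{ext}})$, obtained by padding $x, y, z$ with $1$'s on the right circles, fails associativity in $OH^n_C$.

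The main obstacle is verifying that the sign discrepancy genuinely persists for \emph{every} possible $C$. This reduces to checking: (i) the embedding constraint completely fixes the chronology of $C_{aba}$, leaving only the orientation of its one split as free data; (ii) flipping that orientation negates $OF(C_{aba})$ uniformly, and since this morphism enters once in each of $(xy)z$ and $x(yz)$, the flip negates both without restoring equality between them; (iii) the additional merges appearing in the stacking for $n \geq 3$ act on disjoint sets of circles and so cannot interact with the $n = 2$ core to cancel the discrepancy. The bulk of the work is the careful sign bookkeeping in step (ii), which intertwines signs from the wedge embedding with signs from the split formula $\delta \mapsto (b_1 - b_2) \wedge b_1$.
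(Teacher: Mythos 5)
Your proof is correct and takes essentially the same approach as the paper's, with the roles of $a$ and $b$ interchanged: the paper takes $x = b_1$ to be the outer circle in $W(b)b$ and makes $C_{bab}$ the unique split-containing cobordism, whereas you use $\alpha_1 \in W(a)a$ and $C_{aba}$. In both arguments the split cobordism enters each bracketing exactly once, so its orientation sign $\alpha$ is consistent across $(xy)z$ and $yz$, and the residual $-1$ comes from the exterior-algebra cancellation $\alpha_1\wedge\alpha_1 = 0$ when $x$ is merged against $yz = \alpha(\alpha_1-\alpha_2)$; the extension to $n\ge 3$ by appending parallel arcs on the right is likewise the paper's move. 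One small imprecision, inconsequential to the argument: the embedding constraint does not actually force a unique chronology for $C_{aba}$ — either saddle of the nested pair can be performed first, and both orders produce a merge followed by a split — but what the proof genuinely relies on is simply that the \emph{same fixed} chronological cobordism $C_{aba}\in C$ is used to compute both $yz$ and $(xy)z$, so that whatever sign it carries cancels in the comparison.
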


\begin{proof}
First, suppose that $n =2$ and
let $C \in \mathcal C^2$ be an arbitrary choice of cobordisms. Consider $a,b \in B^2$ such that
\begin{align*}
a &=  {\Cmda}\ , & b &= {\Cmdb}\ .
\end{align*}
Take $x = b_1 \in b(OH^n)b, y = {_b1_a} \in b(OH^n)a$ and $z = {_a1_b} \in a(OH^n)b$, where $b_1$ is the element in the exterior algebra coming from the outer circle in the diagram $W(b)b$. We write $b_2$ for the element generated by the inner circle. Then we compute $x(yz)$ as follows. The cobordism $$C_{bab} : {\xy (0,8)*{}; (0,-8)*{};  (0,4)*{\yo}; (0,-4)*{\xo}\endxy}\rightarrow \bobt$$
is given by a merge followed by a split such that the element $yz$ is 
$$yz =  \alpha(b_1 - b_2), $$
where $\alpha \in \{\pm 1\}$ depends on the chosen orientation for the unique split in $C_{bab}$. Then, $x(yz)$ is given by
$$C_{bbb} : {\xy (0,8)*{}; (0,-8)*{};  (0,4)*{\bobt}; (0,-4)*{\bobt}\endxy} \rightarrow \bobt, \quad x(yz) =  -\alpha b_1 \wedge b_2$$
since $C_{bbb}$ is composed by two merges.
Now, we compute $(xy)z$ by
\begin{align*}
C_{bba} : {\xy  (0,8)*{}; (0,-8)*{};  (0,4)*{\bobt}; (0,-4)*{\yo}\endxy} \rightarrow \yo,& \quad xy = c_1, 
\intertext{with $c_1$ the element coming from the unique circle in $W(b)a$ and then}
C_{bab} : {\xy (0,8)*{}; (0,-8)*{};(0,4)*{\yo}; (0,-4)*{\xo}\endxy} \rightarrow \bobt,& \quad (xy)z = \alpha b_1 \wedge b_2.
\end{align*}
This means that for every $C \in \mathcal C^2$, $(xy)z = -x(yz)$.
To conclude the proof, we observe that this example can be extended for all $n \ge 2$ by adding the same arcs at right of $a$ and~$b$,
\begin{align*}
a_n &:= {\Cmda} {\xy(0,0)*{}; (5,0)*{}; (10,0)*{} **\crv{ (5,-2.5) & (10,-2.5)}; 
(2.5,0)*{}; (12.5,0)*{} **\dir{--}; \endxy} \quad\dots\quad {\xy (0,0)*{}; (5,0)*{} **\crv{ (0,-2.5) & (5,-2.5)};(-2.5,0)*{}; (7.5,0)*{} **\dir{--}; \endxy}\ , & b_n &:= {\Cmdb}  {\xy(0,0)*{}; (5,0)*{}; (10,0)*{} **\crv{ (5,-2.5) & (10,-2.5)};(2.5,0)*{}; (12.5,0)*{} **\dir{--}; \endxy} \quad\dots\quad {\xy (0,0)*{}; (5,0)*{} **\crv{ (0,-2.5) & (5,-2.5)};(-2.5,0)*{}; (7.5,0)*{} **\dir{--}; \endxy}\ ,
\end{align*}
such that the exact same computation can be done for all $n \ge 2$.
\end{proof}

\begin{definition}
We denote by $OH^n_C$ the  $\Z_+$-graded nonassociative, unital $\Z$-algebra given by $OH^n$ with the multiplication obtained from a $C \in \mathcal{C}^n$.
\end{definition}

\begin{rem}
We sometimes write $b(OH^n_C)a$. By this, we mean that we take the elements of the group $b(OH^n)a$, but viewed as elements in $OH^n_C$.
\end{rem}

As for each $C$ we get a family of algebras $OH^n_C$, it is legitimate to ask if on can classify them.
We give some partial answer to this question in Section~\ref{sec:assoc}.

\begin{rem}
From now on, unless otherwise specified, all assertions are valid for all  $n \in \N$ and $C \in \mathcal C^n$ and we don't specify them.
\end{rem}

To ensure that $OH^n_C$ is an odd version of $H^n$, we have to check that the two algebras agree modulo $2$.

\begin{prop}
There is an isomorphism of graded algebras
$$H^n \otimes_\Z \Z/2\Z \simeq OH^n \otimes_\Z \Z/2\Z.$$
\end{prop}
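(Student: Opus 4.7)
The plan is to exhibit an explicit degree-preserving $\Z/2\Z$-linear isomorphism $\phi$ summand by summand, and then reduce the multiplicativity check to comparing the two TQFTs on elementary chronological cobordisms. For any diagram $D$ consisting of $k$ disjoint circles, the even side $F(D)=A^{\otimes k}$ has a basis indexed by functions from the circles of $D$ into $\{1,X\}$, while the odd side $OF(D)=\Ext^{*}V(D)\{-k\}$ has a basis indexed by subsets of the same set of circles. I would set
\[
\phi_{D}(v_{1}\otimes\cdots\otimes v_{k}) = \bigwedge_{i\,:\,v_{i}=X} c_{i},
\]
wedged in a fixed reference order on the circles of $D$. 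A quick degree count using $\deg X=1$ on the even side and the shift $\{-k\}$ together with $\deg c_i=2$ on the odd side confirms this is a grading-preserving bijection between bases. Summing $\phi_{W(b)a}$ over pairs $(a,b)$ and carrying along the common $\{n\}$-shift produces the required isomorphism of graded abelian groups, and it evidently sends $\sum_{a}1_{a}$ to $\sum_{a}1_{a}$.

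The main task is multiplicativity. Since both products are assembled by applying the respective TQFT to the same underlying contraction cobordism $S_{cba}$ (lifted to $C_{cba}$ in the odd case), it suffices to show that for each of the six elementary chronological cobordisms $E$ from Section~\ref{sec:oddkh}, the map $OF(E)$ coincides mod 2 with $F(|E|)$ under the identification $\phi$. This is a finite inspection, handled type by type: birth and merge agree literally; death agrees up to the positive/negative sign, which vanishes mod 2; the split formula sending $1\mapsto \pm(b_{1}-b_{2})$ and $a\mapsto b_{1}\wedge b_{2}$ matches $\Delta(1)=1\otimes X+X\otimes 1$ and $\Delta(X)=X\otimes X$ since $b_{1}-b_{2}\equiv b_{1}+b_{2}\pmod 2$ and the orientation-dependent global sign disappears; the twist $a_{1}\wedge a_{2}\mapsto -a_{1}\wedge a_{2}$ collapses mod 2 to the symmetric braiding used by $F$.

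The only subtlety worth flagging is that $OF(C_{cba})$ a priori depends on the chronology and on the split orientations recorded in $C$, whereas $F(S_{cba})$ is canonical. The observation that all such choices differ only by a global $\pm$ and by a permutation of wedge factors (which contributes a further sign) means every discrepancy evaporates mod 2, so $\phi$ is not only a well-defined algebra isomorphism after reduction, but $OH^{n}_{C}\otimes_\Z\Z/2\Z$ is canonically independent of $C\in\mathcal{C}^{n}$. I would record this independence as a brief remark preceding the six-case verification so the statement is unambiguous; aside from bookkeeping the reference order of circles, no real obstacle arises.
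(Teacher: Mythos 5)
Your proposal is correct and takes the same approach as the paper, which simply observes that $F$ and $OF$ agree up to sign; you have carried out the detailed verification (explicit basis bijection, degree count, and check on each elementary cobordism, plus the observation that the $C$-dependence disappears mod $2$) that the paper compresses into one sentence.
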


\begin{proof}
The result follows directly from the construction since the functor $F$ used to define $H^n$ agrees up to sign with $OF$.
\end{proof}

It is interesting (and it will be useful) to notice that $OH^n_C$ contains a collection of exterior algebras as subalgebras.

\begin{prop} \label{prop:extalg}
There is an inclusion of graded algebras
$$\bigoplus_{a \in B^n} \Ext^* \Z^n \simeq \bigoplus_{a \in B^n} a(OH^n_C)a \subset OH^n_C.$$
\end{prop}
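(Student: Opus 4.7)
The plan is to pin down the multiplication on each diagonal summand $a(OH^n_C)a$ and show that it reproduces the standard wedge product on $\Ext^* \Z^n$, and then to use the fact that products across different matchings vanish to conclude that the direct sum is a subalgebra.

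First I will identify the underlying graded module. The diagram $W(a)a$ is obtained by gluing each arc of $a$ to its mirror in $W(a)$, so it consists of exactly $n$ disjoint circles. Therefore $V(W(a)a) \cong \Z^n$ with each generator in degree $2$, and by the definition of $OH^n$ we have
\[
a(OH^n)a \;=\; \Ext^* V(W(a)a)\{-n\}\{n\} \;\cong\; \Ext^* \Z^n
\]
as graded abelian groups, the degrees already matching the standard grading on $\Ext^* \Z^n$.

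Next I will analyse the cobordism $C_{aaa} : W(a)aW(a)a \to W(a)a$, whose source is a disjoint union of $2n$ circles and whose target is $n$ circles. Since $C_{aaa}$ is chosen with minimal Euler characteristic $-n$, it has exactly $n$ critical points; a split raises the number of circles by one and a merge lowers it by one, so the net change $2n \to n$ forces all $n$ critical points to be merges. Consequently the ambiguity in orienting splits disappears, and the only remaining freedom is the chronology. But that freedom is also harmless: the $n$ merges act on pairwise disjoint pairs of circles, each merge map is sign-free and independent of its orientation (by Remark~\ref{rem:mergeorient}), so the composite is the same for every admissible chronology.

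I will then compute $OF(C_{aaa})$ concretely. Label the circles of the top copy of $W(a)a$ by $\alpha_1,\dots,\alpha_n$, those of the bottom copy by $\beta_1,\dots,\beta_n$, and those of the resulting $W(a)a$ by $c_1,\dots,c_n$, where the $i$-th merge identifies $\alpha_i$ and $\beta_i$ with $c_i$. Under the canonical isomorphism $\Ext^* V(W(a)aW(a)a) \cong \Ext^* V(W(a)a) \otimes \Ext^* V(W(a)a)$ matching the wedge map~(\ref{eq:wedge}), the product of $x, y \in a(OH^n_C)a$ is obtained by forming $x \wedge y$ with $x$ in the $\alpha$-variables and $y$ in the $\beta$-variables, then applying the composition of merges, which sends each $\alpha_i$ and each $\beta_i$ to $c_i$. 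This is precisely the wedge product on $\Ext^* \Z^n$, yielding an isomorphism of graded algebras $a(OH^n_C)a \cong \Ext^* \Z^n$.

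Finally, to upgrade this to a subalgebra inclusion in $OH^n_C$, I will invoke that by construction the multiplication $d(OH^n)c \otimes b(OH^n)a \to OH^n$ vanishes unless $c = b$. Hence for $x \in a(OH^n_C)a$ and $y \in a'(OH^n_C)a'$ the product $yx$ is zero when $a \ne a'$ and lies in $a(OH^n_C)a$ when $a = a'$, so $\bigoplus_{a} a(OH^n_C)a$ is closed under multiplication and contains each unit $1_a$. I do not anticipate any real obstacle; the only point needing some care is the claim that the chronology choice for $C_{aaa}$ is irrelevant, which reduces to the disjointness of the merging pairs and the sign-freeness of merges.
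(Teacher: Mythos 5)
Your proposal is correct and takes essentially the same approach as the paper: note that $W(a)a$ is $n$ disjoint circles so $a(OH^n_C)a \cong \Ext^*\Z^n$, observe via the Euler characteristic that $C_{aaa}$ consists of $n$ merges and no splits and hence $OF(C_{aaa})$ gives the exterior product, and use the vanishing of off-diagonal products to get a subalgebra. You simply spell out in more detail the chronology-independence and the merge-counting step that the paper states more tersely.
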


\begin{proof}
It is enough to notice that $OF(W(a)a) \simeq \Ext^* \Z^n$  as $W(a)a$ is a collection of $n$ circles, and to remark that the multiplication
$$a(OH^n_C)a \otimes_\Z a(OH^n_C)a \rightarrow a(OH^n_C)a$$
is the usual product in the exterior algebra. Indeed, the cobordism $C_{aaa}$ consists of $n$ merges and no split and thus gives the exterior product.
\end{proof}

\paragraph{Diagrammatic notation}
To simplify the notation, we propose a way to write the generators of $OH^n$ using diagrams. First, notice that there is an order on the components of $W(b)a$, for $a,b \in B^n$,  given by reading the diagram from left to right. More precisely, for $a_1, a_2 \in W(b)a$, we say that $a_1 < a_2$  whenever $a_1$ passes through an end point of $a$ (or equivalently $b$) which is at the left of all end points contained in $a_2$. 

An element $x_1 \wedge \dots \wedge x_k$ in $b(OH^n)a$ is written as the diagram $W(b)a$ where we draw the components with a plain line for each $x_i$ and a dashed one for the others. Moreover, we require that $x_1 \wedge \dots \wedge x_k$ is in the order induced by reading the diagram from left to right, otherwise we add a sign to recover this order.   Thus, we get for example in $OH^2$:
\begin{align*}
\boid &= b_1 \wedge 1, & -\bobt &= b_2 \wedge b_1,\\ \\
\idab &= {_a1_b}, & \yo &= c_1,
\end{align*}
with $a_i, b_i$ and $c_i$ following the conventions from the proof of Proposition~\ref{prop:nonassoc}.

\subsection{An example : $OH^2_C$} \label{ex:oh2}
We construct explicit multiplication tables for $OH^2_C$, with $C$ the choice from Remark \ref{rem:standardchoice}. 
  Since the multiplication maps for $*_2(OH^2)a \otimes b(OH^2)*_1 \rightarrow 0$ and $*_2(OH^2)b \otimes a(OH^2)*_1 \rightarrow 0$ are zero for all $*_1, *_2 \in \{a,b\}$, we give the tables only for $*_2(OH^2)a \otimes a(OH^2)*_1 \rightarrow *_2(OH^2)*_1$ and $*_2(OH^2)b \otimes b(OH^2)*_1 \rightarrow *_2(OH^2)*_1$. Moreover, these tables are written with the convention:
$$\begin{tabular}{c|c|}
\cline{2-2}
 & y \\\hline
\multicolumn{1}{ |c |} x &  xy \\\hline
\end{tabular}$$
By direct computation we get
\begin{center}
\begin{tabular}{|c"c|c|c|c|c|c|}
  \hline
 $OH^2_C$ & $\idaa$ & $\aoid$ & $\idat$ & $\aoat$ & $\idab$ & $\xo$ \\
  \thickhline
$\idaa$ & $\idaa$ & $\aoid$ & $\idat$ & $\aoat$ &  $\idab$ & $\xo$  \\
  \hline
$\aoid$ & $\aoid$ & $0$ & $\aoat$ & $0$  & $\xo$ & $0$  \\
  \hline
$\idat$ & $\idat$ & $-\aoat$ & $0$ & $0$ & $\xo$ & $0$ \\
  \hline
$\aoat$ & $\aoat$ & 0 & 0 & 0 & 0 & 0  \\
  \hline
${\idba}$ & ${\idba}$ & $\yo$ & $\yo$ & 0 & $\idbt - \boid$ & $-\bobt$  \\
  \hline
$\yo$ & $\yo$ & 0 & 0 & 0 & $- \bobt$ & 0 \\
  \hline
\end{tabular}
\end{center}
and
\begin{center}
\begin{tabular}{|c"c|c|c|c|c|c|}
  \hline
 $OH^2_C$ & $\idbb$ & $\boid$ & $\idbt$ & $\bobt$ & $\idba$ & $\yo$ \\
  \thickhline
$\idbb$ & $\idbb$ & $\boid$ & $\idbt$ & $\bobt$ &  $\idba$ & $\yo$  \\
  \hline
$\boid$ & $\boid$ & $0$ & $\bobt$ & $0$  & $\yo$ & $0$  \\
  \hline
$\idbt$ & $\idbt$ & $-\bobt$ & $0$ & $0$ & $\yo$ & $0$ \\
  \hline
$\bobt$ & $\bobt$ & 0 & 0 & 0 & 0 & 0  \\
  \hline
$\idab$ & $\idab$ & $\xo$ & $\xo$ & 0 & $\aoid - \idat$ & $\aoat$  \\
  \hline
$\xo$ & $\xo$ & 0 & 0 & 0 & $\aoat$ & 0 \\
  \hline
\end{tabular}
\end{center}


\subsection{The odd center of $OH^n_C$}

When talking about exterior algebras (or in general superalgebras), it is common to consider the supercenter which is an extension of the center to the elements that anticommute. In the same spirit, we define the odd center for $OH^n_C$. 

\begin{definition}
We define the \emph{parity} of an homogeneous element $z \in a(OH^n)b$ by
$$p(z) := \frac{\deg(z) - \deg({_a1_b})}{2} = \frac{\deg(z) - n + |W(b)a|}{2} \mod 2,$$
with $|W(b)a|$ the number of circle components in $W(b)a$. 
\end{definition}

One can easily see that this number counts the factors of $z = a_1 \wedge ... \wedge a_m$, i.e.
$$p(a_1 \wedge \dots \wedge a_m) = m \mod 2.$$

\begin{definition}We call \emph{odd center} of $OH^n_C$ the subset
$$OZ(OH^n_C) := \left\{ z \in OH^n_C | zx = (-1)^{p(x)p(z)}xz, \forall x \in OH^n_C \right\}.$$
\end{definition}

\begin{rem}
The parity does not give a grading on $OH^n_C$, since there are elements $x,y$ in $OH^n_C$ such that
$$p(xy) \ne p(x) + p(y) \mod 2.$$
This means that $OH^n_C$ is not a superalgebra with respect to $p$. As a matter of fact, the parity descends to a degree on the antisymmetric subalgebra $\bigoplus_{a \in B^n} a(OH^n_C)a \subset OH^n_C$, in which the odd center lives. 
\end{rem}

\begin{prop}\label{prop:inccenter}
There are inclusions $Z(OH^n_C) \subset OZ(OH^n_C) \subset \bigoplus_{a\in B^n} a(OH^n_C)a$.
\end{prop}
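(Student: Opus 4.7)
The plan hinges on the local identity elements $1_c \in c(OH^n_C)c$, which have parity zero since $\deg 1_c = 0 = \deg({_c 1_c})$. When tested against $x = 1_c$, both the centrality relation and the odd-centrality relation reduce to $1_c z = z 1_c$, with no sign appearing.

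First I would handle the second inclusion. Decomposing $z \in OZ(OH^n_C)$ as $z = \sum_{b,a} z_{ba}$ with $z_{ba} \in b(OH^n)a$, the zero-product rule (products vanish for mismatched inner indices) together with the unit property $1_b z_{ba} = z_{ba} = z_{ba} 1_a$ yield directly
\[
1_c z = \sum_a z_{ca}, \qquad z 1_c = \sum_b z_{bc}.
\]
Matching components block-by-block in $b(OH^n)a$ and choosing $c = b$ forces $z_{ba} = 0$ whenever $a \ne b$, so $z$ is supported on the diagonal blocks.

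The first inclusion follows by running the same identity in reverse: $1_c z = z 1_c$ also holds for any $z \in Z(OH^n_C)$, so $Z(OH^n_C) \subset \bigoplus_a a(OH^n_C)a$ by the same argument. To upgrade this to $Z \subset OZ$, I would invoke Proposition~\ref{prop:extalg} to identify $a(OH^n_C)a$ with the exterior algebra $\Ext^{*} \Z^n$; any $z_{aa}$ coming from a central $z$ is in particular central in this exterior algebra, which forces it into even parity for $n \ge 2$ (the cases $n = 0, 1$ being trivial since the relevant exterior algebras then commute in the ordinary sense). Hence $p(z) = 0$, and the centrality relation $zx = xz$ coincides with the odd-centrality relation $zx = (-1)^{p(z)p(x)} xz$.

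The main obstacle is the careful computation of $1_c z$ and $z 1_c$ in the nonassociative setting: one must avoid any implicit associative reordering. Since only individual binary products are used, the zero-product rule and unit property suffice, and the argument goes through despite the failure of associativity established in Proposition~\ref{prop:nonassoc}.
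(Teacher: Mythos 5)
Your argument for the second inclusion is correct and is essentially the paper's: you decompose $z$ into blocks, use that $1_c$ has parity zero so $1_c z = z 1_c$, and conclude the off-diagonal blocks vanish. (The paper compresses this to ${_bz_a} = 1_b z 1_a = z(1_b 1_a) = 0$ for $b\ne a$, which is the same calculation.)

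Your argument for the first inclusion, however, has a gap. The step \emph{``any $z_{aa}$ coming from a central $z$ is central in $\Ext^*\Z^n$, which forces it into even parity for $n \ge 2$''} is false when $n$ is odd: the top element $\omega = e_1\wedge\cdots\wedge e_n$ lies in the center of $\Ext^*\Z^n$ (its product with any positive-degree element vanishes on both sides), yet $p(\omega) = n \equiv 1 \pmod 2$. In fact, the class $\sum_{a\in B^n}\omega_a$ is genuinely central in $OH^n_C$ for $n$ odd --- it commutes with diagonal elements inside each exterior algebra, and it annihilates all off-diagonal elements because the top wedge dies under any merge cobordism with $|W(b)a|<n$. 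So one cannot deduce $p(z)=0$ and the deduction ``$Z \subset OZ$ because central elements are even'' collapses. To be fair, the paper's own proof asserts the same unjustified observation (``every central element has even parity''), so you are not alone, but the claim as stated does not hold for odd $n\ge 3$. The inclusion $Z(OH^n_C)\subset OZ(OH^n_C)$ is still true, and a repaired argument would split $z = z^{ev} + z^{odd}$ and observe that for a central $z^{odd}$ and any $x$ with $p(x)=1$, the exterior anticommutativity in the diagonal blocks together with centrality forces $xz^{odd}=0$; hence $z^{odd}x = xz^{odd} = 0 = (-1)^{p(x)p(z^{odd})}xz^{odd}$, so the odd-centrality relation holds vacuously rather than because $z^{odd}=0$.
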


\begin{proof}
The second inclusion is immediate since, for every $z\in OZ(OH^n_C)$, one can decompose $z = \sum_{a,b \in B^n} {_bz_a}$ with ${_bz_a} \in b(OH^n_C)a$ and get ${_bz_a} = 1_bz1_a = z(1_b1_a) = 0$, unless~$b = a$. The first inclusion is obtained by first noticing that $Z(OH^n_C) \subset \bigoplus_{a\in B^n} a(OH^n_C)a$ by the same argument as before, and then observing that every central element has even parity.
\end{proof}

Moreover, one can check that the odd center is an \emph{associative} superalgebra with superdegree given by the parity, and is characterized by the following property.

\begin{prop}\label{prop:caractoddcenter}
An element $z = \sum_{a\in B^n} z_a$ is in $OZ(OH^n_C)$ if and only if $z_b\cdot{_b1_a} = {_b1_a}\cdot z_a$ for all $a,b \in B^n$.
\end{prop}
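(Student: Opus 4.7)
The plan is to handle both directions by reducing to the explicit structure of the cobordisms $C_{bba}$ and $C_{baa}$, which I expect to consist entirely of merges. The forward direction is almost immediate: take $x = {_b1_a}$ in the defining relation of $OZ(OH^n_C)$. Since ${_b1_a}$ has no wedge factors, $p({_b1_a}) = 0$, so the relation reduces to $z \cdot {_b1_a} = {_b1_a} \cdot z$. Decomposing $z = \sum_c z_c$ and using block orthogonality ($z_c \cdot {_b1_a} = 0$ unless $c = b$, and ${_b1_a} \cdot z_c = 0$ unless $c = a$), one reads off $z_b \cdot {_b1_a} = {_b1_a} \cdot z_a$.

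For the converse, assume the hypothesis and, by linearity on parity components, take $z$ homogeneous of parity $p(z)$ so that each $z_a$ has parity $p(z)$ (recall that $|W(a)a| = n$ makes the parity formula for $z_a \in a(OH^n_C)a$ collapse to $\deg(z_a)/2 \bmod 2$). Fix a homogeneous ${_bx_a}$; after noting that $z \cdot x = z_b \cdot {_bx_a}$ and $x \cdot z = {_bx_a} \cdot z_a$ by block orthogonality, it suffices to establish $z_b \cdot {_bx_a} = (-1)^{p(z)p(x)} {_bx_a} \cdot z_a$. The key geometric step I would verify carefully is that in $W(b)bW(b)a$, each arc of the inner $b$ lies on a circle of $W(b)b$ while its mirror in the adjacent $W(b)$ lies on a distinct circle of $W(b)a$, so the saddle contracting them is a merge; the same holds for $C_{baa}$. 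Since merges between disjoint pairs commute and are orientation-independent, $OF(C_{bba})$ acts on $OF(W(b)bW(b)a)$ by substituting each generator of $W(b)b$ by the generator of $W(b)a$ it is merged into, and likewise for $OF(C_{baa})$, both independently of the chosen chronology.

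This gives $z_b \cdot {_bx_a} = \widetilde{z_b} \wedge x$ and ${_bx_a} \cdot z_a = x \wedge \widetilde{z_a}$ inside $\Ext^* V(W(b)a)$, where $\widetilde{z_b}, \widetilde{z_a}$ denote the substituted elements. Specializing the hypothesis to $x = 1$ yields $\widetilde{z_b} = z_b \cdot {_b1_a} = {_b1_a} \cdot z_a = \widetilde{z_a}$, and the desired identity then reduces to the Koszul sign rule in the exterior algebra, $\widetilde{z_a} \wedge x = (-1)^{p(z)p(x)} x \wedge \widetilde{z_a}$. I expect the main obstacle to be the combinatorial verification that the saddles of $C_{bba}$ and $C_{baa}$ are all merges and the resulting chronology-independence of $OF$ on these two cobordisms; once this geometric fact is in hand, the rest is automatic from the structure of exterior algebras.
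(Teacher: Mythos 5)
Your proof is correct and follows essentially the same route as the paper: reduce to homogeneous components, observe that $C_{bba}$ and $C_{baa}$ are composed only of merges so that $OF$ acts by substitution (hence is an algebra homomorphism and is independent of chronology and orientation), factor the two products accordingly, and finish with the Koszul sign rule in $\Ext^* V(W(b)a)$. The paper's proof compresses this into a single displayed chain of equalities, writing $z\cdot{_bx_a} = (z_b\cdot{_b1_a})\wedge{_bx_a}$ and ${_bx_a}\cdot z = {_bx_a}\wedge({_b1_a}\cdot z_a)$ directly, but the content is identical to your $\widetilde{z_b}\wedge x$ and $x\wedge\widetilde{z_a}$ formulation.
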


\begin{proof}
An element $z = \sum_{a\in B^n} z_a$ commutes with $x  = \sum_{a,b \in B^n} {_bx_a}$ if and only if $z$ commutes with every~${_bx_a}$. Moreover, $z\cdot{_bx_a} = (z_b\cdot{_b1_a})\wedge{_bx_a} = (-1)^{p(z)p(x)}  {_bx_a}\wedge (z_b\cdot{_b1_a})$ and we have ${_bx_a}\wedge (z_b\cdot{_b1_a}) = {_bx_a}\wedge ({_b1_a}\cdot z_a) = {_bx_a}.z$ if and only if $z_b\cdot {_b1_a} = {_b1_a}\cdot z_a$.
\end{proof}

The following result allows us to write $OZ(OH^n)$ with no ambiguity.

\begin{prop}
For all $C, C' \in \mathcal{C}^n$, there is an isomorphism of graded (super)algebras
$$OZ(OH^n_C) \simeq OZ(OH^n_{C'}).$$
\end{prop}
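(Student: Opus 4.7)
The plan is to construct an explicit isomorphism by a block-wise rescaling. By Proposition~\ref{prop:extalg}, the diagonal subalgebra $\bigoplus_{a \in B^n} a(OH^n_C)a \simeq \bigoplus_a \Ext^* \Z^n$ via an identification that does not depend on $C$, since $C_{aaa}$ consists only of merges and $OF$ applied to merges is insensitive to chronology and orientation choices. By Proposition~\ref{prop:inccenter}, both $OZ(OH^n_C)$ and $OZ(OH^n_{C'})$ therefore sit inside this common $C$-independent graded superalgebra; moreover, the multiplication restricted to this diagonal subalgebra agrees for $C$ and $C'$. So any candidate identification that preserves the diagonal ring structure is automatically a homomorphism of graded superalgebras; only the cross-block commutation conditions need to be matched.

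For each triple $(c,b,a)$ the cobordisms $C_{cba}$ and $C'_{cba}$ carry the same underlying surface but different chronology and split orientations. By the sign coherence of the odd functor (cf.~\cite{putyra14}), this produces signs $\eta_{cba} \in \{\pm 1\}$ with $OF(C'_{cba}) = \eta_{cba} \cdot OF(C_{cba})$. Combined with Proposition~\ref{prop:caractoddcenter}, the membership condition for $OZ(OH^n_{C'})$ rewrites in the $C$-multiplication as
\[
\eta_{bba}\bigl(z_b \cdot_C {_b1_a}\bigr) \;=\; \eta_{baa}\bigl({_b1_a} \cdot_C z_a\bigr), \qquad \forall\, a,b \in B^n.
\]
Seeking an isomorphism of the shape $\phi(z)_a := \sigma_a z_a$ with $\sigma_a \in \{\pm 1\}$, one checks that $\phi$ sends $OZ(OH^n_C)$ to $OZ(OH^n_{C'})$ exactly when
\[
\sigma_b\,\eta_{bba} \;=\; \sigma_a\,\eta_{baa}, \qquad \forall\, a,b \in B^n,
\]
whenever the corresponding products are nonzero.

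The main obstacle is solving this coboundary equation for $\sigma$. My plan is to fix a basepoint $a_0 \in B^n$, set $\sigma_{a_0}=1$, and propagate $\sigma_a := \eta_{aaa_0}/\eta_{a a_0 a_0}$. Consistency for an arbitrary pair $(a,b)$ reduces to the identity
\[
\eta_{bba}\,\eta_{b a_0 a_0} \;=\; \eta_{baa}\,\eta_{b b a_0},
\]
which I would verify by presenting the passage $C \rightsquigarrow C'$ as a sequence of elementary moves (single chronology transpositions and single split reorientations) and tracking the induced sign change on each of the four involved cobordisms. The crucial point is that each such local move on the common underlying ``contract the middle arcs'' template affects the pair $(\eta_{bba},\eta_{b b a_0})$ and the pair $(\eta_{baa},\eta_{b a_0 a_0})$ in parallel, because the modified saddle sits in a symmetric position relative to the $b$-arcs in both pairs; the required identity then follows by collecting contributions. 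Once the sign $\sigma$ is constructed, $\phi$ yields the desired isomorphism of graded superalgebras, and an alternative verification can be obtained a posteriori through the identification $OZ(OH^n_C) \simeq OH(\mathfrak{B}_{n,n},\Z)$ with the $C$-independent odd Springer cohomology established in the next section.
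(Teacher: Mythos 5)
Your approach is significantly overcomplicated, and as written it contains a genuine flaw. The key observation the paper uses, and which you miss, is that the only two multiplication maps appearing in the odd-center condition of Proposition~\ref{prop:caractoddcenter} are
\[
b(OH^n)b \otimes b(OH^n)a \to b(OH^n)a \quad\text{and}\quad b(OH^n)a \otimes a(OH^n)a \to b(OH^n)a,
\]
coming from the cobordisms $C_{bba}$ and $C_{baa}$. These cobordisms consist entirely of merges: each of the $n$ contractions in $C_{bba}$ joins the arc of a fresh circle in $W(b)b$ to a component of $W(b)a$, so no saddle can be a split (and similarly for $C_{baa}$). Since $OF$ is insensitive to the orientation and ordering of merges (Remark~\ref{rem:mergeorient}), your signs $\eta_{bba}$ and $\eta_{baa}$ are identically $1$. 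Together with the fact that the diagonal subalgebra $\bigoplus_a a(OH^n_C)a$ is $C$-independent (Proposition~\ref{prop:extalg}), this shows that $OZ(OH^n_C)$ and $OZ(OH^n_{C'})$ are literally equal as subalgebras of a common ambient algebra, and the identity map is the isomorphism. Your entire coboundary/propagation machinery is unnecessary.

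Beyond being unnecessary, your proposed map $\phi(z)_a := \sigma_a z_a$ with $\sigma_a \in \{\pm 1\}$ fails to be a ring homomorphism unless all $\sigma_a = 1$: one computes $(\phi(z)\phi(w))_a = \sigma_a^2 z_a w_a = z_a w_a$ while $\phi(zw)_a = \sigma_a z_a w_a$, so these differ whenever $\sigma_a = -1$. A parity-weighted rescaling $\sigma_a^{p(z_a)}$ would fix this defect, but you did not write that, and in any case the consistency identity you only sketch (``I would verify by tracking elementary moves'') is left unproved. Finally, your suggested fallback via the identification with $OH(\mathfrak{B}_{n,n},\Z)$ in the next section is circular: that identification is stated for an unsubscripted $OZ(OH^n)$, whose well-definedness is precisely what this proposition is needed to justify.
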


\begin{proof}
The condition $z_b\cdot {_b1_a} = {_b1_a}\cdot z_a$ from Proposition \ref{prop:caractoddcenter} depends only on the multiplication maps
\begin{align*}
b(OH^n)b \otimes_\Z b(OH^n)a &\rightarrow b(OH^n)a& &\text{ and } & b(OH^n)a \otimes_\Z a(OH^n)a \rightarrow b(OH^n)a.
\end{align*}
It is not hard to see that those are defined using only cobordisms without split so that they do not depend on $C$. Moreover, by Proposition~\ref{prop:extalg}, $\bigoplus_{a\in B^n} a(OH^n_C)a$ is isomorphic to a direct sum of exterior algebras and thus, does not depend on $C$.
\end{proof}


\section{The odd center of $OH^n$ and the $(n,n)$-Springer variety}\label{sec:ospringer}

We are now ready to prove one of the main results of this paper, which is to construct an explicit isomorphism between the odd cohomology of the $(n,n)$-Springer variety and the odd center of the odd Khovanov arc algebra.

\begin{thm}\label{thm:iso}
There is an isomorphism of graded (super)algebras between $OZ(OH^n)$ and $OH(\mathfrak B_{n,n}, \Z)$. Moreover, this isomorphism is given by
$$h : OH(\mathfrak{B}_{n,n}, \Z) \rightarrow OZ(OH^n),\qquad x_i \mapsto \sum_{a \in B^n} a_i,$$
where $a_i$ is generated by the circle component of $W(a)a$ passing through the $i^{th}$ end point of~$a$, counting from the left.
\end{thm}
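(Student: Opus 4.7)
My plan is to verify that $h$ defines a well-defined morphism of graded superalgebras landing in the odd center, and then to deduce that it is an isomorphism by reduction modulo~$2$ using Khovanov's Theorem~\ref{thm:isozhnhbnn}.

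First I would check well-definedness by verifying that the defining relations of Definition~\ref{def:oddcoh} hold for the elements $\sum_a a_i$. Degrees match since each $a_i$ has degree $2$. Anticommutativity $h(x_i)h(x_j)+h(x_j)h(x_i)=0$ is immediate from Proposition~\ref{prop:extalg}: the product is concentrated on the diagonal $a=b$ and there agrees with the exterior product, so $h(x_i)h(x_j)=\sum_a a_i\wedge a_j$, which is visibly antisymmetric in $i,j$ (it vanishes when the $i$-th and $j$-th endpoints of $a$ traverse the same circle of $W(a)a$, in which case $a_i=a_j$). The serious work is to show $h(\epsilon_r^S)=0$ for every partially symmetric generator. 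Here one fixes $a$ and groups the endpoints $1,\dots,2n$ according to the $n$ arcs of $a$ (which correspond to the circles of $W(a)a$, each traversing exactly two endpoints). The key identity is that, within each such pair, the alternating signs $(-1)^{S(i_j)-1}$ built into $x_{i_j}^S$ combine with exterior-algebra signs to produce the vanishing, parallel to the argument in~\cite{laudarussell14} giving the corresponding relations in $OPol_{2n}$.

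Next I would verify that the image lies in $OZ(OH^n)$ using the characterization of Proposition~\ref{prop:caractoddcenter}: it suffices to show $a_i\cdot{_a1_b}={_a1_b}\cdot b_i$ for all $a,b\in B^n$. Both products are realized by contraction cobordisms made of merges only. Indeed, $C_{aab}:W(a)aW(a)b\to W(a)b$ uses $n$ saddles while destroying the $n$ circles of the middle copy of $aW(a)$, so all saddles must be merges (and similarly for $C_{abb}$). Consequently, the orientation choices in $C$ play no role, and $OF$ acts simply by identifying the merged circles. Under these identifications, both $a_i\cdot{_a1_b}$ and ${_a1_b}\cdot b_i$ map to the class of the unique circle of $W(a)b$ passing through the $i$-th endpoint.

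Finally I would conclude via modulo~$2$ reduction. Both $OH(\mathfrak B_{n,n},\Z)$ and $OZ(OH^n)$ are graded free abelian groups in each degree (the latter sits inside $\bigoplus_a\Ext^*\Z^n$ by Propositions~\ref{prop:inccenter} and~\ref{prop:extalg}). After reducing mod~$2$, all signs disappear, the odd-center condition collapses to ordinary centrality, so $OZ(OH^n)\otimes\Z/2$ embeds into $Z(H^n\otimes\Z/2)\simeq H(\mathfrak B_{n,n},\Z/2)$, and $\bar h$ composed with this embedding is precisely Khovanov's isomorphism from Theorem~\ref{thm:isozhnhbnn}. Finite-dimensional rank matching in each degree then forces both $\bar h$ and the embedding to be isomorphisms, and a Nakayama-style lift upgrades $\bar h$ to the desired isomorphism $h$ over $\Z$. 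The main obstacle I expect is the combinatorial computation in the first step: matching the alternating signs of Lauda--Russell's $x_{i_j}^S$ against the exterior-algebra signs coming from grouping endpoints by arc of $a$ is the one place where the specific shape of the Springer relations has to align with the crossingless-matching combinatorics.
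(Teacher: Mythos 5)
Your steps for well-definedness (checking $h_a(\epsilon^S_R)=0$ via the alternating-sign combinatorics, grouping endpoints by arc) and for landing in the odd center (only merges, so $C$-independence) are in line with the paper's Lemmas~\ref{lem:freepoints}--\ref{lem:signfromparity}, Proposition~\ref{prop:ocnkilled}, and Proposition~\ref{prop:caractoddcenter}. The injectivity argument via mod~$2$ reduction and freeness of $OH(\mathfrak B_{n,n},\Z)$ also matches Proposition~\ref{prop:injective}. The gap is in your concluding step: the ``Nakayama-style lift'' does not exist. You have an injective map $h$ of free $\Z$-modules that induces an isomorphism after $\otimes\,\Z/2\Z$, but this does \emph{not} force $h$ to be surjective over $\Z$. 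The standard counterexample is $\Z\xrightarrow{\times 3}\Z$: injective, free-to-free, an isomorphism mod~$2$, but not onto. In general, $h$ injective plus $\bar h$ surjective only tells you that $\coker(h)$ is a finite abelian group of odd order. You cannot rule out odd torsion by repeating the mod-$p$ argument at odd primes, because the crucial identification $OH^n\otimes\Z/p\simeq H^n\otimes\Z/p$ holds \emph{only} at $p=2$; for odd $p$ the signs do not disappear and there is no analogue of Khovanov's Theorem~\ref{thm:isozhnhbnn} to lean on.

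This is precisely why the paper goes through the hypertorus construction $\widetilde T\subset T^{2n}$: Lemmas~\ref{lem:celldecomp} and~\ref{lem:hom}, Proposition~\ref{prop:exactseq} (Mayer--Vietoris), and Proposition~\ref{prop:grouprank} supply an explicit epimorphism $k:H(\widetilde T)\twoheadrightarrow OZ(OH^n)$ together with $\rank H(\widetilde T)=\binom{2n}{n}$, and then the factorization $h=k\circ\pi$ (with $\pi$ surjective by the freeness of $H_1(\widetilde T)$ inside $H_1(T^{2n})$) gives surjectivity of $h$ directly, without any lifting argument. There is also a smaller subtlety you gloss over: to say ``$\bar h$ composed with the embedding is precisely Khovanov's isomorphism'' you need the natural map $Z(H^n)\otimes\Z/2\Z\to Z(H^n\otimes\Z/2\Z)$ to be an isomorphism (taking centers need not commute with base change), which again is not automatic and is essentially what the paper's geometric computation certifies. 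In short, your plan captures Steps 1 and 2 of the paper's proof correctly, but Step 3 (surjectivity) requires an independent upper bound on $OZ(OH^n)$, and the paper's hypertorus argument is doing real work that cannot be replaced by a mod-$2$ reduction.
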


The proof of this theorem will occupy the rest of this section and is split into four steps. Firstly, we define a morphism $h_0 : OPol_{2n}  \rightarrow OZ(OH^n_C)$ and prove that $OI_n$ from Definition~\ref{def:oddcoh} lies in the kernel of this map, inducing the map $h$ on $OH(\mathfrak B_{n,n}, \Z)$. Secondly, we show that $h$ is injective using the equivalence up to sign between the odd and the even case together with Theorem~\ref{thm:isozhnhbnn}. Thirdly, we show that the ranks of the two algebras are equal using the cohomology of a geometric construction based on hypertori. Finally, we prove the theorem using all those ingredients.

\subsection*{Existence of $h$}

To construct $h$, we first define the algebra homomorphism
$$h_0 : OPol_{2n} \rightarrow OH^n_C,\qquad x_i \mapsto \sum_{a \in B^n} a_i,$$
where $a_i$ is generated by the circle component of $W(a)a$ passing through the $i^{th}$ end point of $a$, counting from the left.
It is well defined since 
$$h_0(x_ix_j) = \sum_{a \in B^n} a_i \wedge a_j = -\sum_{a \in B^n} a_j \wedge a_i = -h_0(x_jx_i).$$

\begin{lem}
The image of $h_0$ lies in the odd center of $OH^n_C$
$$h_0(OPol_{2n}) \subset OZ(OH^n).$$
\end{lem}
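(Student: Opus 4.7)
The plan is to apply Proposition~\ref{prop:caractoddcenter}. Since $h_0(x_i) = \sum_{a \in B^n} a_i$ is supported on the diagonal $\bigoplus_{a \in B^n} a(OH^n_C)a$, membership in $OZ(OH^n_C)$ reduces to the family of identities
\[ b_i \cdot {_b1_a} = {_b1_a} \cdot a_i \quad \text{in } b(OH^n_C)a, \]
one for every pair $a,b \in B^n$ and every $i \in \{1, \ldots, 2n\}$. Once this is known for the generators, the associativity of $OZ(OH^n_C)$ noted just after Proposition~\ref{prop:inccenter} forces $h_0$ to extend unambiguously to all of $OPol_{2n}$ with image in $OZ$.

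The first task is to analyze the relevant contraction cobordisms $C_{bba}: W(b)bW(b)a \to W(b)a$ and $C_{baa}: W(b)aW(a)a \to W(b)a$ and to show that each is composed \emph{exclusively of merges}. For $C_{bba}$, at the saddle associated with the $b$-arc from $i$ to $b(i)$, the circle of the top copy $W(b)b$ containing this arc has so far remained intact (every saddle acts on a distinct $b$-arc, and the top $W(b)$-arcs are never directly touched), while the other participating circle sits on the $W(b)a$ side and contains the bottom $W(b)$-arc from $i$ to $b(i)$. These two circles are therefore disjoint, and the saddle merges them. A mirror-image argument handles $C_{baa}$. So no splits appear, and by Remark~\ref{rem:mergeorient} the computation is sign-free and independent of the orientation choices in $C$.

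Next I would trace the generators through these merge-only cobordisms, using the rule that a merge sends both incoming circle generators to the generator of the surviving one. Under $OF(C_{bba})$, the element $b_i$ representing the circle $B_i$ of $W(b)b$ through endpoint $i$ is successively merged into a component of the final $W(b)a$; the $W(b)$-arcs of that component are exactly the $W(b)$-arcs inherited from $B_i$ and from the other circles of $W(b)b$ sharing the same $W(b)$-skeleton, so the component is $c_i$, the circle of $W(b)a$ through the $i$-th endpoint. An analogous computation for $OF(C_{baa})$, starting from the circle $A_i$ of $W(a)a$ through endpoint $i$, shows that $a_i$ is likewise sent to $c_i$. Therefore $b_i \cdot {_b1_a} = c_i = {_b1_a} \cdot a_i$, which finishes the verification.

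The delicate point is the geometric tracking in the previous paragraph: one must check that the merged component in the final $W(b)a$ containing the image of $B_i$ (resp.\ $A_i$) really is the circle $c_i$ through the $i$-th endpoint. This amounts to a combinatorial description of how the circles of $W(b)a$ are built from the arcs dictated by the matchings $a$ and $b$; it becomes transparent once the picture is drawn, but is the only place in the argument requiring careful bookkeeping.
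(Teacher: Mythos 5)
Your proof is correct and follows the same route as the paper's: reduce via Proposition~\ref{prop:caractoddcenter} to checking $b_i\cdot {}_b1_a = {}_b1_a\cdot a_i$ on generators, observe that the relevant cobordisms $C_{bba}$ and $C_{baa}$ contain only merges (so the computation is sign-free), and trace both $b_i$ and $a_i$ to the component $c_i$ of $W(b)a$ through the $i$-th endpoint. The paper states this in one line; you have simply supplied the merge-only bookkeeping and the closure-under-products step that the paper leaves implicit.
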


\begin{proof}
The proof is straightforward from Proposition~\ref{prop:caractoddcenter} and the fact that for all $a,b \in B^n$, we have
$$(h_0(x_i)) {_b1_a} = b_i .{_b1_a} = {_b1_a}.a_i = {_b1_a}(h_0(x_i)),$$
since $a_i$ and $b_i$ are both sent to the component of $W(b)a$ passing through the $i^{th}$ point.
\end{proof}

Now, we want to show that $\epsilon_r^S$ is in the kernel of $h_0$ for all $S$ and $r$ as in Definition~\ref{def:oddcoh}. 
This is equivalent to showing that $\epsilon_r^S$ lies in the kernel of the homomorphism
$$h_a : OPol_{2n} \rightarrow a(OH^n_C)a,\qquad x_i \mapsto a_i,$$ 
for all $a \in B^n$, since $h = \sum_{a \in B^n} h_a$. 

For the sake of simplicity, we fix an element $a \in B^n$. We also denote by $E_{2n}$ the ordered set $\{1, \dots, 2n\}$ and we see it as the set of end points of $a$, from left to right. For $S \subset E_{2n}$, we call a pair of distinct points in $S$ which are linked by an arc in $a$, an \emph{arc} of $S$. The other points of $S$ are called \emph{free points}. For $R = \{i_1, \dots, i_r\} \subset S$, we also write  
$$\epsilon_R^S := x_{i_1}^S \dots x_{i_r}^S$$
such that 
$$\epsilon_r^S = \sum_{R \subset S, |R| = r} \epsilon_R^S.$$

\begin{lem}\label{lem:freepoints}
Let $S \subset E_{2n}$ be a subset with $|S| = n+k$. Then, $S$ contains at least $k$ arcs and at most $n-k$ free points.
\end{lem}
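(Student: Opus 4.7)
The plan is a straightforward double-counting argument, so the main ``obstacle'' is just writing the bookkeeping cleanly; there is no real difficulty here.

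First I would set up notation for the three types that each of the $n$ arcs of $a$ can fall into with respect to $S$: let $A$ denote the number of arcs of $a$ with both endpoints in $S$ (these are the arcs of $S$), let $A_{\mathrm{half}}$ denote those with exactly one endpoint in $S$, and let $A_{\mathrm{out}}$ denote those with no endpoint in $S$. Since the arcs of $a$ partition $E_{2n}$ into $n$ pairs, we have the partition identity
\begin{equation*}
A + A_{\mathrm{half}} + A_{\mathrm{out}} = n.
\end{equation*}

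Next I would identify the free points of $S$ with the $S$-endpoints of the half-in arcs: a point $i \in S$ is free precisely when its partner under $a$ lies outside $S$, so $f = A_{\mathrm{half}}$, where $f$ is the number of free points. Counting $|S|$ by contribution of each arc type then gives
\begin{equation*}
2A + f = |S| = n+k.
\end{equation*}

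Subtracting the first identity from the second yields $A - A_{\mathrm{out}} = k$, and since $A_{\mathrm{out}} \geq 0$ this gives the first claim $A \geq k$. For the second claim, substitute back into $2A + f = n+k$ to get $f = n + k - 2A \leq n + k - 2k = n - k$ (equivalently, one can just use $f = n - A - A_{\mathrm{out}} \leq n - A \leq n - k$). This completes the proof.
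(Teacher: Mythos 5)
Your proof is correct and follows essentially the same counting idea as the paper: the paper's two-line argument (at most $n$ free points, hence at least $k$ arcs, hence at most $n+k-2k$ free points) is exactly your pair of identities $A + A_{\mathrm{half}} + A_{\mathrm{out}} = n$ and $2A + f = n+k$ in compressed form. Your version simply makes the double-count explicit, which is a perfectly fine (and perhaps clearer) way to write it.
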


\begin{proof}
There are at most $n$ free points and we pick $n+k$ points, thus we have to pick at least $k$ arcs. Then, there remain $n+k-2k$ points which can be free.
\end{proof}

\begin{lem}\label{lem:containsarc}
If $R \subset S \subset E_{2n}$ contains an arc of $S$, then 
$h_a(\epsilon_R^S) = 0.$
\end{lem}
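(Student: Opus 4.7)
The plan is to reduce the lemma to the elementary exterior-algebra identity $v \wedge v = 0$, by showing that the hypothesis forces two factors in the relevant wedge product to coincide.

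First, I would unpack the right-hand side. Writing $R = \{i_1, \dots, i_r\}$ and using $h_a(x_{i_k}) = a_{i_k}$ together with the signs $(-1)^{S(i_k)-1}$ coming from the definition of $x_{i_k}^S$, one obtains
\[
h_a(\epsilon_R^S) \;=\; \pm\, a_{i_1} \wedge \cdots \wedge a_{i_r}
\]
in $a(OH^n_C)a \simeq \Ext^* V(W(a)a)$. The overall sign plays no role for vanishing and can be discarded.

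Next, I would invoke the geometry of $W(a)a$. Each of the $n$ arcs of $a$ is glued to its mirror image in $W(a)$ to form a single closed circle, giving exactly the $n$ circle components of $W(a)a$. In particular, if $\{i,j\}$ is an arc of $a$, both endpoints $i$ and $j$ lie on the \emph{same} circle of $W(a)a$. By the definition of $a_i$ as the generator of $V(W(a)a)$ indexed by the circle through the $i^{th}$ endpoint of $a$, this gives the identification $a_i = a_j$ as generators of the exterior algebra.

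Finally, the assumption that $R$ contains an arc $\{i,j\}$ of $S$ means, by the definition of an arc of $S$, that $i,j \in R$ are joined by an arc of $a$; hence two of the factors $a_{i_k}$ in the displayed wedge product coincide, and the product vanishes. No real obstacle arises, since the statement is essentially a direct translation of $a_i \wedge a_i = 0$ through the identification of the two exterior generators associated to the endpoints of a single arc of $a$.
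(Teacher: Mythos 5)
Your argument is correct and is essentially the paper's own proof, only spelled out in more detail: the key observation in both cases is that the two endpoints $i,j$ of an arc of $a$ lie on the same circle of $W(a)a$, so $a_i = a_j$, and hence $h_a(\epsilon_R^S) = \pm\, a_{i_1}\wedge\cdots\wedge a_{i_r}$ contains a repeated factor and vanishes.
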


\begin{proof}
This assertion follows from the fact that if $i$ is connected to $i'$ in $a$, then $a_i = a_{i'}$ and thus,
$h_a(x_i x_{i'}) = a_i \wedge a_{i'} = 0.$
\end{proof}

\begin{lem}\label{lem:rgen}
For all $R \subset S \subset E_{2n}$ with $|R| > n$, one has
$h_a(\epsilon_R^S) = 0.$
\end{lem}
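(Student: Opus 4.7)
The plan is to reduce this to Lemma~\ref{lem:containsarc} via a pigeonhole argument on the circle components of $W(a)a$.

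First, I would recall the structure of $W(a)a$: since $a \in B^n$ is a crossingless matching of the $2n$ endpoints into $n$ arcs, the diagram $W(a)a$ consists of exactly $n$ circle components, one for each arc of $a$. Consequently, the assignment $i \mapsto a_i$ (sending an endpoint to the generator of the circle passing through it) factors through the set of arcs of $a$ and is therefore a $2$-to-$1$ map from $E_{2n}$ onto an $n$-element set. Explicitly, $a_i = a_{i'}$ if and only if $i$ and $i'$ are the two endpoints of a common arc of $a$.

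Next, assume $R = \{i_1, \dots, i_r\} \subset S$ with $r = |R| > n$. By pigeonhole applied to the above $2$-to-$1$ map, there exist distinct indices $i_p, i_q \in R$ with $a_{i_p} = a_{i_q}$, meaning $i_p$ and $i_q$ are joined by an arc of $a$. Since $R \subset S$, both $i_p$ and $i_q$ lie in $S$, so they form an arc of $S$ in the sense of the preceding definitions. Thus $R$ contains an arc of $S$.

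Finally, I would invoke Lemma~\ref{lem:containsarc} directly: once $R$ contains an arc of $S$, we have $h_a(\epsilon_R^S) = 0$, as claimed. There is no real obstacle here; the entire content of the lemma is the pigeonhole observation that more than $n$ endpoints of $a$ cannot have pairwise distinct associated circle components. The only thing to be careful about is matching the paper's terminology (``arc of $S$'' vs.\ ``arc of $a$ whose endpoints lie in $S$''), which holds automatically because $R \subset S$.
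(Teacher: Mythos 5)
Your proof is correct and essentially the same in spirit as the paper's (both are a pigeonhole count showing $|R|>n$ forces two points of $R$ to lie on a common arc of $a$, so $R$ contains an arc of $S$ and Lemma~\ref{lem:containsarc} applies). In fact your phrasing via the $2$-to-$1$ map $i\mapsto a_i$ onto the $n$ circles of $W(a)a$ is a bit more explicit than the paper's terse "at most $n$ free points, $|R|\ge n+1$" remark, which leaves the reader to supply the observation that, absent an arc in $R$, each of the $n$ arcs of $a$ contributes at most one point to $R$.
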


\begin{proof}
There are at most $n$ free points in $S$, but $R$ contains at least $n+1$ points. So, $R$ contains an arc and the result follows from the preceding lemma. 
\end{proof}

\begin{lem}\label{lem:arcinR}
For all $R\subset S \subset E_{2n}$ with $|S| = n+k$ and $|R| \ge n - k +1$, there exists an arc $(j,j')$ in $S$ with $j$ or $j'$ in $R$. 
\end{lem}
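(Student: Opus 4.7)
The plan is to prove this by a direct counting argument, contrapositively. Suppose that no arc of $S$ has an endpoint in $R$. Then every element of $R$ must be a free point of $S$, since any point of $S$ that is not free is, by definition, the endpoint of some arc of $S$, and such an arc would then have $j$ or $j'$ in $R$. Hence $R$ is contained in the set of free points of $S$.

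By Lemma~\ref{lem:freepoints}, $S$ contains at most $n-k$ free points, so we would conclude $|R|\le n-k$, contradicting the hypothesis $|R|\ge n-k+1$. Therefore at least one arc of $S$ must have an endpoint in $R$, which is exactly the claim.

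The argument is essentially a pigeonhole applied to the bound on free points given by Lemma~\ref{lem:freepoints}, so there is no real obstacle; the only thing to be careful about is the terminology (that ``an arc of $S$'' means a pair of points of $S$ linked by an arc of $a$, and that every non-free point of $S$ is by definition the endpoint of such an arc, since its partner under $a$ also lies in $S$).
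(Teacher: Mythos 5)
Your proof is correct and is the same pigeonhole argument the paper uses: since by Lemma~\ref{lem:freepoints} there are at most $n-k$ free points in $S$, a set $R$ of at least $n-k+1$ points of $S$ must contain a non-free point, i.e.\ an endpoint of an arc of $S$. You simply spell out the contrapositive and the fact that non-free points are arc endpoints, which the paper leaves implicit.
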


\begin{proof}
We have to choose $n-k+1$ points in $S$, but there are at most $n-k$ free points by the Lemma~\ref{lem:freepoints}.
\end{proof}

\begin{example}\label{ex:hexists}
It is probably time to stop here a bit and look at an example that we will generalize below. So suppose
\begin{small}\begin{align*} 
a = 
\quad
&{\xy 
(0,2.5)*{1}; (5,2.5)*{2}; (10,2.5)*{3}; (15,2.5)*{4}; (20,2.5)*{5}; (25,2.5)*{6}; 
(30,2.5)*{7}; (35,2.5)*{8}; (40,2.5)*{9}; (45,2.5)*{10}; (50,2.5)*{11}; (55,2.5)*{12}; 
(0,0)*{};
(35,0)*{}
**\crv{ (0,-15) & (35,-15)};
(5,0)*{};
(10,0)*{}
**\crv{ (5,-3) & (10,-3)};
(15,0)*{};
(30,0)*{}
**\crv{ (15,-9) & (30,-9)};
(20,0)*{};
(25,0)*{}
**\crv{ (20, -3) & (25,-3)};
(40,0)*{};
(55,0)*{}
**\crv{ (40, -9) & (55,-9)};
(45,0)*{};
(50,0)*{}
**\crv{ (45, -3) & (50,-3)};
(-1,0)*{}; (56,0)*{} **\dir{--}; 
   \endxy}\ ,
\intertext{with $n=6$, $r=4$, $k=3$, $S = \{1,2,3,5,7,8,9,11,12\}$ and $R = \{5,8,9,11\} \subset S$. Putting a circle on the end points of $a$ which are in $S$, a bullet on the end points of $R$ and drawing the arcs that are not in $S$ by dotted lines we get}
&{\xy 
(0,5)*{\scriptstyle  1}; (5,5)*{2}; (10,5)*{3}; (15,5)*{4}; (20,5)*{5}; (25,5)*{6}; 
(30,5)*{7}; (35,5)*{8}; (40,5)*{9}; (45,5)*{10}; (50,5)*{11}; (55,5)*{12}; 
(0,0)*{\circ};
(35,0)*{\bullet}
**\crv{ (0,-15) & (35,-15)};
(5,0)*{\circ};
(10,0)*{\circ}
**\crv{(5,-3) & (10,-3)};
(15,0)*{ };
(30,0)*{ \circ}
**\crv{~*=<4pt>{.} (15,-9) & (30,-9)};
(20,0)*{\bullet};
(25,0)*{}
**\crv{ ~*=<4pt>{.}  (20, -3) & (25,-3)};
(40,0)*{\bullet};
(55,0)*{\circ}
**\crv{(40, -9) & (55,-9)};
(45,0)*{};
(50,0)*{\bullet}
**\crv{~*=<4pt>{.}  (45, -3) & (50,-3)};
   \endxy}\ .
\end{align*}\end{small}
The free points of $S$ are thus $\{5,7,11\}$. We have $\epsilon_R^S = (-x_5)(-x_8)x_9(-x_{11})$ and in $a(OH_C^n)a$ we obtain
$$
h_a(\epsilon^S_R) = (-A_4) \wedge (-A_1) \wedge A_5 \wedge (-A_6),
$$
where $A_i$ corresponds to the $i^{th}$ circle in $W(a)a$ counting from the left (see the paragraph about diagrammatic notation in Section~\ref{sect:hnodd}).
Now we look how $h_a(\epsilon^S_R)$ behaves when we modify $R$. Take $R' = \{5,8,11,12\}$, obtained from $R$ by exchanging the end points of the $5^{th}$ arc. Then we get $\epsilon_{R'}^S = (-x_5)(-x_8)(-x_{11})x_{12}$ and 
$$
h_a(\epsilon^S_{R'}) = (-A_4) \wedge (-A_1)  \wedge (-A_6) \wedge A_5 = -h_0(\epsilon^S_{R}).
$$
For $R'' = \{1,5,9,11\}$, obtained by replacing $8$ with $1$, we get $\epsilon_{R''}^S = x_1(-x_5)x_9(-x_{11})$ and 
$$
h_a(\epsilon^S_{R''}) = A_1 \wedge  (-A_4)   \wedge A_5 \wedge (-A_6) = h_0(\epsilon^S_{R}).
$$
Fortunately, by replacing $9$ with $12$ in $R''$ we get an element that will cancel with $h_0(\epsilon^S_{R''})$. If we remove $11$ or $9$ from $R$ and take $7$ instead, then exchanging $8$ with $1$ will also create two elements that cancel with each other. If we take $2$ or $3$ instead of $7$, then exchanging those points will also gives $0$. In general, the number of free points in $S$ which are not in $R$ plus the number of points in $R$ that are not free, both between the endpoints of an arc, will say if exchanging those endpoints change the sign in the image of $h_a$. We can observe that for each choice of $R$ there exists a possibility to exchange two points such that they cancel with each other in the image of $h_a$. At the end, we get $h_a(\epsilon_r^S) = 0$.
\end{example}

For $x \in S \subset E_{2n}$, we write 
$$F_S(x) := \begin{cases} 1, &\text{ if $x$ is free in $S$, } \\ 0, &\text{ otherwise,} \end{cases}$$
and, for $R \subset S$ and $(j,j')$ an arc of $S$ such that $j\in R$ or $j' \in R$, we define
\begin{align*}
p_{R,S}((j,j')) :&= \sum_{\substack{x \in S \setminus R \\ j < x < j'}} F_S(x)  + \sum_{\substack{y \in R \\ j < y < j'}} (1-F_S(y)) \mod 2\\
&\equiv \sum_{\substack{x \in S \\ j < x < j'}} F_S(x) + \sum_{\substack{y \in R \\ j < y < j'}} 1  \mod 2.
\end{align*}
We say that a point $x$ (resp. an arc $(k,k')$) belongs to an arc $(j,j')$ if $j < x < j'$ (resp. $j<k<k'<j'$). Therefore, $p_{R,S}((j,j'))$ counts the number of free points of $S$ belonging to the arc $(j,j')$ and which are not in $R$ plus the number of points of $R$ also belonging ton $(j,j')$ and which are not free. Denote the set of all sub-arcs of $(j,j')$ with an extremity in $R$ by $]j,j'[_R$. Also, write $]j,j[^{max}_R$ for the set of all maximal sub-arcs of $(j,j')$ with an extremity in $R$, that is, the sub-arcs not belonging to any other arcs from $]j,j'[_R$.

\begin{example}
Suppose we take $R$ as in Example~\ref{ex:hexists}. Then we get $p_{R,S}((1,8)) \equiv 3 \mod 2$ and $p_{R,S}((9,12)) \equiv 2 \mod 2$.
\end{example}

\begin{lem}\label{lem:subarccounting}
If no arc of $S \subset E_{2n}$ belongs to some $R \subset S$, then for any arc $(j,j')$ in $S$ with $j$ or $j'$ in $R$ we have
$$p_{R,S}((j,j')) = \sum_{\substack{(k,k') \in ]j,j'[_R^{max}}} (p_{R,S}((k,k'))+1) + \sum_{\substack{x \in S \setminus R \\ x \notin (k,k'), \forall (k,k') \in ]j,j'[_R}} F_S(x) \mod 2, $$
with the right sum on all $x \in S \setminus R$ which are not belonging to any sub-arcs of $]j,j'[_R$.
\end{lem}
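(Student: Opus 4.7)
The plan is to rewrite $p_{R,S}((j,j'))$ using the equivalent second form, $\sum_{x \in S,\; j<x<j'} F_S(x) + |R \cap (j,j')| \pmod 2$, and to split both sums according to how the points of $(j,j')$ sit with respect to the maximal sub-arcs in $]j,j'[^{max}_R$. The entire argument rests on two basic facts about the crossingless matching $a$: (i) any two arcs of $S$ are either nested or disjoint, so distinct elements of $]j,j'[^{max}_R$ span pairwise disjoint open intervals by maximality; (ii) by the standing hypothesis that no arc of $S$ is contained in $R$, each $(k,k') \in ]j,j'[^{max}_R$ has exactly one endpoint in $R$.

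With these in hand I would partition the integer points of the open interval $(j,j')$ into three regions: $A$, the points strictly inside some (necessarily unique) maximal sub-arc; $B$, the endpoints of those maximal sub-arcs (all lying in $S$ and satisfying $F_S = 0$); and $C$, the remainder. Contributions to $p_{R,S}((j,j'))$ coming from $A$ assemble, one maximal sub-arc at a time, into $\sum_{(k,k') \in ]j,j'[^{max}_R} p_{R,S}((k,k'))$. The $F_S$-contribution of $B$ vanishes, while the $R$-part of $B$ has cardinality $|]j,j'[^{max}_R|$ by (ii), which is what supplies the $+1$ for each maximal sub-arc.

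The key remaining step is the treatment of $C$. I would argue that every $y \in C \cap R$ must be free in $S$: otherwise the $a$-partner $y'$ of $y$ lies in $S$, and by the nesting property the arc $(y,y')$ is a sub-arc of $(j,j')$ with an endpoint in $R$, so $(y,y') \in ]j,j'[_R$; but then maximality of $]j,j'[^{max}_R$ forces $y \in A \cup B$, a contradiction. Consequently $\sum_{y \in C \cap R} F_S(y) = |C \cap R|$, so the $|C \cap R|$-contribution to $|R \cap (j,j')|$ cancels modulo $2$ with the $R$-piece of the $F_S$-sum over $C$, leaving precisely $\sum_{x \in C \cap (S\setminus R)} F_S(x)$, the desired remainder sum.

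The main obstacle will be purely bookkeeping. The lemma's condition ``$x \notin (k,k')$ for all $(k,k') \in ]j,j'[_R$'' is phrased in terms of open intervals and arbitrary sub-arcs, whereas the natural decomposition I use involves only maximal sub-arcs and distinguishes endpoints from strict interiors. The equivalence between the two descriptions rests on endpoints of maximal sub-arcs having $F_S = 0$ and on any non-maximal sub-arc being contained in a maximal one, so that ``strictly inside some sub-arc of $]j,j'[_R$'' coincides with ``strictly inside some element of $]j,j'[^{max}_R$''.
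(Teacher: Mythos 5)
Your proof is correct and follows the same decomposition strategy the paper uses, namely partitioning the interval $(j,j')$ into the interiors of the maximal sub-arcs in $]j,j'[^{max}_R$, their endpoints, and the remainder, and observing that exactly one endpoint of each maximal sub-arc lies in $R$ (giving the $+1$). Your argument is considerably more detailed than the paper's two-sentence proof; in particular, the step showing that every point of the remainder region lying in $R$ must be free in $S$ (so that those contributions cancel modulo $2$ with the corresponding $F_S$-terms) is needed for the reduction to the displayed formula but is glossed over in the paper.
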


\begin{proof}
All points belonging to a maximal sub-arc $(k,k')$ from the left sum belong to $(j,j')$. Moreover, $k$ or $k'$ is a free point in $R$ and thus, has a contribution by $+1$.
\end{proof}

\begin{lem}\label{lem:evenarc}
Let $R \subset S \subset E_{2n}$ be subsets with $|S| = n + k$ and $n-k+1 \le |R| \le n$. If no arc of $S$ belongs to $R$, then there exists an arc $(j,j')$ of $S$ with $j$ or $j'$ in $R$ and such that $p_{R,S}((j,j')) = 0 \mod 2$.
\end{lem}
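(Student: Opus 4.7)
The plan is to argue by contradiction: assume that every $R$-touched arc $T$ of $S$ (i.e.\ arc with exactly one endpoint in $R$) satisfies $p_{R,S}(T) \equiv 1 \pmod 2$, and derive a contradiction from the size hypothesis $|R| \ge n-k+1$ together with Lemma~\ref{lem:freepoints}.

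Under the assumption, apply Lemma~\ref{lem:subarccounting} to each such $T$. Every summand $(p_{R,S}(T') + 1)$ in the sum over maximal $R$-touched sub-arcs vanishes modulo~$2$, so the formula collapses to $p_{R,S}(T) \equiv N(T) \pmod 2$, where $N(T)$ denotes the number of free points of $S$ not in $R$ that lie strictly inside $T$ and outside every $R$-touched sub-arc of $T$. Hence $N(T) \equiv 1 \pmod 2$, and in particular $N(T) \ge 1$, for every $R$-touched arc $T$.

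Next, count globally. By planarity the $R$-touched arcs containing a given point of $E_{2n}$ form a nested chain, so each free point $x$ of $S$ not in $R$ contributes to $N(T)$ for at most one $T$, namely the innermost $R$-touched arc containing $x$ (if any exists). Writing $\mathcal T$ for the set of $R$-touched arcs and $F$ for the set of free points of $S$, this gives
$$|\mathcal T| \;\le\; \sum_{T \in \mathcal T} N(T) \;\le\; |F \setminus R|.$$

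Finally, combine with the size inequalities. Since each arc of $S$ has at most one endpoint in $R$, one has $|R| = |\mathcal T| + |F \cap R| = |\mathcal T| + |F| - |F \setminus R|$, so $|R| \ge n-k+1$ rearranges to $|\mathcal T| \ge n-k+1 - |F| + |F \setminus R|$. Using $|F| \le n-k$ from Lemma~\ref{lem:freepoints} gives $|\mathcal T| \ge 1 + |F \setminus R|$, which directly contradicts the bound $|\mathcal T| \le |F \setminus R|$ obtained in the previous paragraph. I do not anticipate a major obstacle: once Lemma~\ref{lem:subarccounting} is invoked and the counting inequality $\sum_T N(T) \le |F \setminus R|$ is set up, the proof is essentially a one-line arithmetic combination of the size hypotheses on $R$ and $S$.
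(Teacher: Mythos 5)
Your proof is correct and follows essentially the same route as the paper's: argue by contradiction, invoke Lemma~\ref{lem:subarccounting} to reduce each $p_{R,S}(T)$ to the count $N(T)$ of "unassigned" free points, use planarity/nesting to show those free points are distinct across arcs, and close with $|F|\le n-k$ from Lemma~\ref{lem:freepoints}. The only difference is bookkeeping: the paper sums free points in $R$ and in $S\setminus R$ separately to exceed $n-k$, whereas you rearrange $|R| = |\mathcal T| + |F\cap R|$ to sandwich $|\mathcal T|$ between $|F\setminus R|$ and $1+|F\setminus R|$; these are algebraically equivalent. Your version is slightly more explicit about the nesting/distinctness step, which the paper leaves implicit.
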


\begin{proof}
First, by Lemma~\ref{lem:arcinR} there is at least one arc of $S$ with an extremity in $R$. We write $L \ne \emptyset$ for the set of all such arcs. Now, we suppose by contradiction that $p_{R,S} = 1 \mod 2$ on all those arcs. By the Lemma~\ref{lem:subarccounting}, we get for all $(j,j') \in L$,
$$p_{R,S}((j,j')) =  \sum_{\substack{x \in S \setminus R \\ x \notin (k,k'), \forall (k,k') \in ]j,j'[_R}}F_S(x) \mod 2,$$
with $x$ not belonging to any  sub-arc $(k,k') $ of $(j,j')$ such that $k$ or $k'$ is in $R$. Since by contradiction hypothesis this sum must be equal to $1 \mod 2$, there is at least one such $x$ which is free and thus at least $|L|$ free points in $S \setminus R$. However, we know that there are at least $n-k-|L|+1$ free points in $R$ and by Lemma~\ref{lem:freepoints} there are at most $n-k$ free points in $S$, which is a contradiction. 
\end{proof}

\begin{lem}\label{lem:signfromparity}
Let $R \subset S \subset E_{2n}$ be subsets and $(j,j')$ be an arc of $S$ with $j \in R$ (resp. $j' \in R$). We have 
$$h_a(\epsilon_{R}^S) = (-1)^{(p_{R,S}((j,j'))+1)} h_a(\epsilon_{R'}^S),$$
with $R'$ obtained by taking $R$ where we replace $j$ by $j'$ (resp. $j'$ by $j$), respecting the order of $S$.
\end{lem}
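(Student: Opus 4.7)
The plan is to expand both sides from the definitions and compare signs. If both $j, j' \in R$, then $R$ contains the arc $(j, j')$ of $S$ and both sides vanish by Lemma~\ref{lem:containsarc}, so from now on I assume $j \in R$ and $j' \notin R$ (the case $j' \in R$, $j \notin R$ is handled symmetrically), and write $R' = (R \setminus \{j\}) \cup \{j'\}$.

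The first step is to isolate the contribution of the scalar prefactors $x_i^S = (-1)^{S(i)-1} x_i$. Only the factor attached to $j$ is replaced by the factor attached to $j'$, which contributes a relative sign of $(-1)^{S(j')-S(j)}$. Since $j' \in S$, a direct count gives $S(j')-S(j) = |S \cap (j,j')| + 1$.

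The second step is to compare the wedge products under $h_a$. Because $(j, j')$ is an arc of $a$ we have $a_j = a_{j'}$, so $h_a(\epsilon_R^S)$ and $h_a(\epsilon_{R'}^S)$ consist of exactly the same wedge factors; only the position of the common factor $a_j = a_{j'}$ shifts. A direct position count shows it moves past precisely the $|R \cap (j,j')|$ factors coming from elements of $R$ strictly between $j$ and $j'$, contributing the sign $(-1)^{|R \cap (j,j')|}$. Multiplying both contributions yields the overall relative sign $(-1)^{|S \cap (j,j')| + |R \cap (j,j')| + 1}$.

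The last step matches this exponent with $p_{R,S}((j,j')) + 1$ modulo $2$, using the second equivalent formula for $p_{R,S}((j,j'))$. The non-crossing property of $a$ is what makes this work: any non-free point $x \in S$ with $j < x < j'$ is paired in $a$ with some point which, by non-crossing with the arc $(j, j')$, must also lie in $(j, j')$ and in $S$. Hence the non-free points of $S$ inside $(j, j')$ come in pairs, which gives $|S \cap (j,j')| \equiv \sum_{x \in S,\, j<x<j'} F_S(x) \pmod 2$. Combined with the tautology $|R \cap (j,j')| = \sum_{y \in R,\, j<y<j'} 1$, this matches $p_{R,S}((j,j'))$ and completes the identification. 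I do not expect any real obstacle; the difficulty is purely one of careful sign bookkeeping, and the non-crossing pairing of non-free points is the only mildly non-mechanical ingredient.
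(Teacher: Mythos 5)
Your proof is correct, and it takes a genuinely different route from the paper. The paper proves Lemma~\ref{lem:signfromparity} by a double induction on the size of $S \cap {]j,j'[}$ and of $R$, verifying the base case $S \cap {]j,j'[} = \emptyset$ and then ``leaving the details to the reader'' for the inductive step. You instead give a closed-form computation: you separate the relative sign into a scalar-prefactor contribution $(-1)^{S(j')-S(j)} = (-1)^{|S \cap (j,j')| + 1}$ and a wedge-reordering contribution $(-1)^{|R \cap (j,j')|}$, then identify the total exponent with $p_{R,S}((j,j'))+1$ modulo~$2$ by observing that the non-crossing property of $a$ forces non-free points of $S$ inside $(j,j')$ to pair up, so $|S\cap(j,j')|$ has the same parity as the number of free points of $S$ in $(j,j')$. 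This last observation is exactly the non-mechanical ingredient that the paper's sketch suppresses, so your argument is actually more explicit and self-contained than the one in the paper, while proving the same identity. One minor remark: the degenerate case $j,j' \in R$ that you dispose of via Lemma~\ref{lem:containsarc} is really excluded by the statement (the replacement $j \mapsto j'$ tacitly presupposes $j' \notin R$), so it costs nothing but is not strictly needed.
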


\begin{proof}
The proof is an induction on the size of $S \cap ]j,j'[$ and $R$. If $S \cap ]j,j'[ = \emptyset$, then clearly $p_R((j,j')) = 0 \mod 2$ and we get the result since $S(j') = S(j) +1$ and thus,
$x_{j}^S = -x_{j'}^S.$
Then,  to get the result in the general case, we check how the sign of the exterior product changes when we add free points and arcs to $S$ and points to $R$. We leave the details to the reader.
\end{proof}


\begin{prop}\label{prop:ocnkilled}
For all $a\in B^n$, $k \in \{1, \dots,n\}$, $r \ge n-k+1$ and all $S \subset E_{2n}$ such that $|S| = n+k$, we have
$$h_a(\epsilon_r^S) = 0.$$
\end{prop}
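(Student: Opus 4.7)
The plan is to exhibit a sign-reversing fixed-point-free involution on the index set of the defining sum $\epsilon_r^S = \sum_{R \subset S,\, |R| = r} \epsilon_R^S$, which by Lemma~\ref{lem:signfromparity} will force $h_a(\epsilon_r^S)$ to vanish term by term in pairs.

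First I would dispose of the easy cases. If $r > n$, Lemma~\ref{lem:rgen} already gives $h_a(\epsilon_R^S) = 0$ for every $R$, so I may assume $n-k+1 \le r \le n$. Then Lemma~\ref{lem:containsarc} lets me restrict to the set $\mathcal{A}$ of subsets $R \subset S$ of size $r$ that contain no arc of $S$, since any other $R$ already contributes zero. It therefore suffices to construct an involution $\sigma \colon \mathcal{A} \to \mathcal{A}$ with $h_a(\epsilon_{\sigma(R)}^S) = -h_a(\epsilon_R^S)$ and no fixed points.

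The definition of $\sigma$ uses Lemma~\ref{lem:evenarc}. For each $R \in \mathcal{A}$, call an arc $(j,j')$ of $S$ \emph{balanced for $R$} if it has exactly one endpoint in $R$ and $p_{R,S}((j,j')) \equiv 0 \pmod{2}$; the lemma guarantees at least one such arc exists. I set $\sigma(R)$ to be $R$ with the endpoint of the leftmost balanced arc lying in $R$ removed and the other endpoint added. Lemma~\ref{lem:signfromparity} then gives exactly the required sign change, and a direct check (using that $a$ is a matching, so each of $j,j'$ is an endpoint of the arc $(j,j')$ of $S$ and no other) shows that $\sigma(R)$ still contains no arc of $S$, hence lies in $\mathcal{A}$.

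The main obstacle is verifying that $\sigma$ is an involution, i.e.\ that the leftmost balanced arc of $\sigma(R)$ coincides with that of $R$, so that $\sigma^2(R) = R$. This reduces to showing that the set of balanced arcs is preserved under the swap. Since $a$ is a crossingless matching, any two arcs of $S$ are either nested or disjoint, never interleaving. For any arc $(p,q)\neq (j,j')$ of $S$: if $(p,q)$ is nested inside $(j,j')$ then neither $j$ nor $j'$ lies strictly between $p$ and $q$; if $(j,j')$ is nested inside $(p,q)$ then both do, so replacing one by the other in $R$ changes $|R \cap (p,q)|$ by zero modulo $2$; and if the two arcs are disjoint nothing inside $(p,q)$ changes at all. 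In every case $p_{R,S}((p,q)) = p_{\sigma(R),S}((p,q))$, and the same holds trivially for $(j,j')$ itself since neither $j$ nor $j'$ lies in the open interval $(j,j')$. Consequently the set of balanced arcs of $R$ and $\sigma(R)$ coincide, $\sigma$ is a well-defined fixed-point-free involution, and pairing $\mathcal{A}$ by $\sigma$ forces $h_a(\epsilon_r^S) = 0$.
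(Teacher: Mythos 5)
Your proof is correct and follows essentially the same route as the paper, whose own proof merely states that the result ``directly follows'' from Lemmas~\ref{lem:rgen}, \ref{lem:containsarc}, \ref{lem:evenarc}, and~\ref{lem:signfromparity}. You have supplied the sign-reversing involution that the paper leaves implicit, and your well-definedness check --- that the set of balanced arcs is invariant under the endpoint swap, which hinges on the arcs of a crossingless matching being either nested or disjoint --- is the right way to see that $\sigma$ squares to the identity.
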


\begin{proof}
This result directly follows from Lemmas~\ref{lem:rgen},~\ref{lem:containsarc},~\ref{lem:evenarc}, and~\ref{lem:signfromparity}.
\end{proof}

\begin{cor}\label{cor:s}
The map $h_0$ induces a homomorphism of graded (super)algebras given by
$$h : OH(\mathfrak{B}_{n,n}, \Z) \rightarrow OZ(OH^n),\qquad x_i \mapsto \sum_{a \in B^n} a_i.$$
\end{cor}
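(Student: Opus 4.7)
The plan is to show that the algebra homomorphism $h_0$ factors through the quotient $OPol_{2n}/OI_n = OH(\mathfrak B_{n,n},\Z)$, i.e.\ that $OI_n$ is contained in the kernel of $h_0$. Since $h_0$ has image in $OZ(OH^n)$ by the preceding lemma, and since $OZ(OH^n) \subset \bigoplus_{a\in B^n} a(OH^n)a$ by Proposition~\ref{prop:inccenter}, I will decompose $h_0 = \sum_{a\in B^n} h_a$ along the diagonal summands, where $h_a:OPol_{2n}\to a(OH^n_C)a$ is the well-defined algebra map $x_i\mapsto a_i$ already introduced in the existence argument (well-defined because $a(OH^n_C)a$ is an exterior algebra by Proposition~\ref{prop:extalg}).

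Next I will check that $h_0$ vanishes on generators of $OI_n$. As a left ideal of $OPol_{2n}$, $OI_n$ is generated by the elements $\epsilon_r^S$ of Definition~\ref{def:oddcoh}. Proposition~\ref{prop:ocnkilled} shows that each $h_a$ annihilates every such generator, so summing over $a\in B^n$ gives $h_0(\epsilon_r^S)=0$, and hence $h_0(OI_n)=0$. Therefore $h_0$ descends to a well-defined algebra homomorphism $h:OH(\mathfrak B_{n,n},\Z)\to OZ(OH^n)$ sending $x_i\mapsto \sum_{a\in B^n} a_i$.

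It remains to verify that $h$ respects the graded and superalgebra structures. Degree preservation is inherited from $h_0$: the generator $x_i$ has degree $2$ in $OPol_{2n}$, and each $a_i$ has degree $2$ in the exterior algebra $OF(W(a)a)\{n\}$ by the construction of $OF$ in Section~\ref{sec:oddkh}. For the superalgebra structure I use that, on the antisymmetric subalgebra $\bigoplus_{a\in B^n} a(OH^n_C)a$ where $OZ(OH^n)$ lives, the parity $p$ restricts to a genuine $\Z/2\Z$-grading, and $h_0$ is parity-preserving on monomials by construction. The only nontrivial input has already been carried out in Proposition~\ref{prop:ocnkilled}; what is left at the level of this corollary is simply the bookkeeping of splitting $h_0$ across the diagonal idempotents and assembling the componentwise annihilations into vanishing of the full ideal, so I expect no serious obstacle.
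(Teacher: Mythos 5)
Your proof is correct and takes essentially the same route as the paper: decompose $h_0 = \sum_a h_a$, use Proposition~\ref{prop:ocnkilled} to annihilate the generators $\epsilon_r^S$ of $OI_n$, and conclude by the preceding lemma that the induced map lands in $OZ(OH^n)$. Your additional remarks on degree and parity preservation are correct and simply make explicit what the paper leaves implicit.
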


\subsection*{Injectivity of $h$}

We will need the following result.

\begin{lem}
The algebra homomorphism induced by $h$
$$\overline h : OH(\mathfrak{B}_{n,n}, \Z) \otimes_\Z \Z/2\Z \rightarrow OZ(OH^n)  \otimes_\Z \Z/2\Z$$
 is an isomorphism.
\end{lem}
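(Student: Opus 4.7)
The plan is to reduce the claim to Khovanov's isomorphism of Theorem~\ref{thm:isozhnhbnn} by exploiting three mod-$2$ identifications already recorded in the paper: $OH^n \otimes_\Z \Z/2\Z \simeq H^n \otimes_\Z \Z/2\Z$ as graded algebras, $OH(\mathfrak B_{n,n}, \Z) \otimes_\Z \Z/2\Z \simeq H(\mathfrak B_{n,n}, \Z) \otimes_\Z \Z/2\Z$ by construction of the odd Springer cohomology, and the fact that in characteristic $2$ the sign $(-1)^{p(x)p(z)}$ is trivial, so the defining condition for $OZ(OH^n)$ degenerates to ordinary commutation. In particular, $OZ(OH^n) \otimes \Z/2\Z$ maps into $Z(H^n \otimes \Z/2\Z)$.

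First I would verify that, after composing $\overline h$ with the natural map $OZ(OH^n) \otimes \Z/2\Z \to OH^n \otimes \Z/2\Z \simeq H^n \otimes \Z/2\Z$, one recovers precisely the mod-$2$ reduction of Khovanov's map $x_i \mapsto \sum_{a \in B^n} a_i$. Since $H^n$ is free over $\Z$, any $y \in H^n$ with $[y,h] \in 2 H^n$ for every $h$ must already lie in $Z(H^n)$, so $H^n / Z(H^n)$ is $2$-torsion-free; consequently $Z(H^n) \otimes \Z/2\Z$ embeds into $H^n \otimes \Z/2\Z$ with image exactly $Z(H^n \otimes \Z/2\Z)$, and Khovanov's map modulo $2$ becomes an isomorphism $H(\mathfrak B_{n,n}, \Z) \otimes \Z/2\Z \xrightarrow{\sim} Z(H^n \otimes \Z/2\Z)$.

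Next I would show that the inclusion $OZ(OH^n) \otimes \Z/2\Z \hookrightarrow OH^n \otimes \Z/2\Z$ is itself injective. By Proposition~\ref{prop:caractoddcenter}, $OZ(OH^n)$ is the kernel of the $\Z$-linear map $OH^n \to \bigoplus_{a,b} b(OH^n)a$ sending $\sum z_a \mapsto (z_b \cdot {_b1_a} - {_b1_a} \cdot z_a)_{a,b}$, so $OH^n/OZ(OH^n)$ embeds in a free $\Z$-module and is torsion-free; tensoring with $\Z/2\Z$ therefore preserves injectivity. Injectivity of $\overline h$ is then inherited from injectivity of the composite above, while surjectivity follows by a diagram chase: any $m \in OZ(OH^n) \otimes \Z/2\Z$ becomes central in $H^n \otimes \Z/2\Z$, so lies in the image of the mod-$2$ Khovanov isomorphism, hence equals the image of some $\overline h(k)$ in $OH^n \otimes \Z/2\Z$, and injectivity of the inclusion forces $m = \overline h(k)$ already in $OZ(OH^n) \otimes \Z/2\Z$.

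The main technical obstacle is the failure of $-\otimes \Z/2\Z$ to be exact, which a priori could spoil both the inclusions $OZ(OH^n) \hookrightarrow OH^n$ and $Z(H^n) \hookrightarrow H^n$ after reduction. Both are handled by the same elementary observation, namely that these centers are kernels of $\Z$-linear maps out of free $\Z$-modules and therefore have torsion-free quotients, after which the lemma is a formal consequence of Khovanov's classical Theorem~\ref{thm:isozhnhbnn} reduced modulo~$2$.
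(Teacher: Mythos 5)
Your high-level strategy matches the paper's: both proofs reduce the claim to Khovanov's Theorem~\ref{thm:isozhnhbnn} by using the mod-$2$ agreement between the odd and even constructions, and then invoke the commutative square relating $\overline h$ to the reduction of Khovanov's isomorphism. Where you diverge is in how you justify that the centers line up after tensoring with $\Z/2\Z$, and here your argument has a genuine gap.

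You route the comparison through $Z(H^n \otimes_\Z \Z/2\Z)$, and the key claim you make is: ``Since $H^n$ is free over $\Z$, any $y\in H^n$ with $[y,h]\in 2H^n$ for every $h$ must already lie in $Z(H^n)$.'' This does \emph{not} follow from freeness. A free $\Z$-algebra can easily contain an element that commutes modulo $2$ but not integrally; the exterior algebra $\Lambda^*_\Z[x,y]$ already shows this, since $[x,y]=2xy\in 2\Lambda$ yet $x$ is not central. What freeness (or more precisely, torsion-freeness of $H^n/Z(H^n)$, which is automatic because it embeds in a free module) gives you is only the \emph{injectivity} of $Z(H^n)\otimes\Z/2\Z\hookrightarrow Z(H^n\otimes\Z/2\Z)$; the \emph{surjectivity} of this map is exactly the content of your unjustified claim, and it is precisely what your diagram chase depends on. Without it, an element of $OZ(OH^n)\otimes\Z/2\Z$ that is central in $H^n\otimes\Z/2\Z$ need not lift to anything in the image of the mod-$2$ Khovanov isomorphism.

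The paper sidesteps this issue entirely by never passing through $Z(H^n\otimes\Z/2\Z)$. Instead, it identifies $OZ(OH^n)\otimes\Z/2\Z$ directly with $Z(H^n)\otimes\Z/2\Z$: by Proposition~\ref{prop:caractoddcenter} (and its even analogue), both centers are the equalizers of maps $\gamma$, $\gamma_H\colon \bigoplus_a a(\cdot)a \to \bigoplus_{a\ne b} b(\cdot)a$ built entirely out of merge cobordisms (no splits), and the odd and even TQFTs assign the \emph{same} linear maps to merges. So the two equalizers are the same submodule of the common ambient free module, and the isomorphism after $\otimes\,\Z/2\Z$ is immediate. If you want to repair your argument while keeping its structure, you should replace the unproven ``commutes mod $2$ implies central'' claim by this comparison of equalizers; as written, that step does not stand.
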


\begin{proof}
The proof results from the commutativity of the diagram
$$\xymatrix{
 OH(\mathfrak{B}_{n,n}, \Z) \otimes_\Z \Z/2\Z \ar[r]^{\overline h} \ar[d]_{\simeq} &  OZ(OH^n)  \otimes_\Z \Z/2\Z \ar[d]^{\simeq} \\
H(\mathfrak{B}_{n,n}, \Z)  \otimes_\Z \Z/2\Z \ar[r]_{\simeq}  & Z(H^n) \otimes_\Z \Z/2\Z
}$$
which comes from the equivalence modulo $2$ between the odd and the even cases, knowing that $H(\mathfrak B_{n,n}, \Z)$ is isomorphic to $Z(H^n)$ by a  morphism similar to $h$, see \cite[Section~5.3]{khovanov04} for more details.
\end{proof}

\begin{prop}\label{prop:injective}
The homomorphism $h$ is injective.
\end{prop}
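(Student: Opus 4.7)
The strategy is a standard mod-$2$ lifting argument, leveraging the previous lemma that $\overline h$ is an isomorphism. The plan is to show that any element of $\ker h$ is infinitely $2$-divisible in the source, and then conclude by finite generation and freeness of $OH(\mathfrak{B}_{n,n}, \Z)$.

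First I would check that the target $OZ(OH^n)$ is torsion-free as a $\Z$-module. This is automatic from the construction: each summand $b(OH^n)a = \Ext^* V(W(b)a)\{n\}$ is an exterior algebra on a finitely generated free $\Z$-module, hence itself a free $\Z$-module. The direct sum $OH^n$ is then free, and $OZ(OH^n)$, being a submodule, is at least torsion-free.

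Next I would observe that the source $OH(\mathfrak{B}_{n,n}, \Z)$ is a finitely generated $\Z$-module: since $x_i^2 \in OI_n$ for every $i$ (noted just after Definition~\ref{def:oddcoh}, from \cite[Lemma~3.6]{laudarussell14}), the quotient is spanned as a $\Z$-module by the finitely many squarefree monomials $x_{i_1}\cdots x_{i_k}$ with $i_1 < \cdots < i_k$. Moreover, Lauda--Russell exhibit an explicit $\Z$-basis, so the source is in fact a finitely generated \emph{free} $\Z$-module.

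With these two ingredients in hand, injectivity follows formally. Given $x \in \ker h$, the previous lemma yields $\overline x = 0$ in the mod-$2$ reduction, so $x = 2y$ for some $y \in OH(\mathfrak{B}_{n,n}, \Z)$. Then $2 h(y) = h(x) = 0$, and since the target is torsion-free, $h(y) = 0$; hence $y \in \ker h$ as well. Iterating, $\ker h \subset \bigcap_{k\ge 0} 2^k \cdot OH(\mathfrak{B}_{n,n}, \Z)$, and this intersection vanishes because the source is finitely generated and free over $\Z$. The main (and really only) obstacle is invoking the free $\Z$-module structure of $OH(\mathfrak{B}_{n,n}, \Z)$; if one wished to avoid citing \cite{laudarussell14} for this, one could instead verify directly in the case $\lambda = (n,n)$ that the squarefree monomials yield a $\Z$-basis of the quotient, which is a finite combinatorial check.
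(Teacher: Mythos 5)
Your proof is correct and follows essentially the same route as the paper's: both rest on Lauda--Russell's result that $OH(\mathfrak B_{n,n},\Z)$ is a free $\Z$-module and on the general fact that a $\Z$-linear map of free $\Z$-modules whose mod-$2$ reduction is an isomorphism must be injective. The paper invokes this fact as a black box, while you unpack it via the $2$-divisibility and torsion-freeness argument; the substance is the same.
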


\begin{proof}
From \cite[Theorem~3.8]{laudarussell14} we know that $OH(\mathfrak {B}_{n,n}, \Z)$ is a free $\Z$-module.
Hence the injectivity of $h$ follows from the lemma above and the fact that if a $\Z$-linear map between two free $\Z$-modules induces an isomorphism over $\Z/2\Z$ then it is injective.
\end{proof}

\subsection*{Computing the rank of $OZ(OH^n)$}

To show the existence of an isomorphism between $Z(H^n)$ and $H(\mathfrak B_{n,n})$, Khovanov constructed in \cite{khovanov04} a manifold $\widetilde S$ using products of $2$-spheres. We make a similar construction, but using circles instead of spheres. The inspiration from Khovanov's work should be clear. All cohomology groups and rings in this section are supposed to be taken on $\Z$.

\begin{definition}
For an $a \in B^n$, let $T_a \subset T^{2n} := \underset{2n}{\underbrace{S^1 \times \dots \times S^1}}$ be the set of all points $(x_1, \dots, x_{2n}) \in T^{2n}$ such that if $i$ is linked to $j$ by an arc of $a$, then $x_i = x_j$. We also define
$$\widetilde T := \bigcup_{a \in B^n} T_a \subset T^{2n}.$$
\end{definition}

One can notice that $T_a \simeq T^n$ as we equalize $n$ pairs of coordinates. In the same spirit, we have that $T_b \cap T_a \simeq T^{|W(b)a|}$, with $x_k = x_l$ whenever the $k^{th}$ and the $l^{th}$ end points are in the same component of $W(b)a$. As a result, $\widetilde T$ is a collection of hypertori identified to each other on certain subtori.

It is well-know that the cohomology ring of an $n$-torus is the exterior algebra generated by $n$ elements of degree $1$. If we forget the grading, then we get an isomorphism of superrings $a(OH^n_C)a \simeq H(T_a)$ and an isomorphism of abelian groups ${_b}(OH^n_C){_a} \simeq_{ab} H(T_b \cap T_a)$ for all $a,b \in B^n$. Lifting the (super)ring structure from $H(T_b \cap T_a)$, we get a (super)ring structure on ${_b}(OH^n_C){_a}$ and (super)ring morphisms
\begin{align*}
\gamma_{a; b,a} : a(OH^n_C)a \rightarrow b(OH^n_C)a,&\qquad x \mapsto {_b}1_ax, \\
\gamma_{b; b,a} : b(OH^n_C)b \rightarrow b(OH^n_C)a,&\qquad x \mapsto x{_b}{1_a}.
\end{align*}
The inclusions $T_b  \cap T_a \subset T_a$ and  $T_b  \cap T_a \subset T_b$ induce ring morphisms on the cohomology
\begin{align*}
\psi_{a; b,a} &: H(T_a) \rightarrow H(T_b \cap T_a), \\
\psi_{b; b,a} &: H(T_b) \rightarrow H(T_b \cap T_a). 
\end{align*}

\begin{lem}\label{lem:diaginclusions}
The morphisms defined above are such that the following diagram of ring morphisms commutes:
$$
\xymatrix{
H(T_b) \ar[r]^-{\psi_{b;b,a}} \ar[d]^{\simeq} & H(T_b \cap T_a) \ar[d]^{\simeq} & H(T_a) \ar[l]_-{\psi_{a;b,a}} \ar[d]^{\simeq} \\
b(OH^n_C)b \ar[r]_{\gamma_{b;b,a}} & b(OH^n_C)a & a(OH^n_C)a \ar[l]^{\gamma_{a;b,a}}.
}
$$
\end{lem}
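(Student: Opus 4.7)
My plan is to check commutativity on a generating set. Each of the tori $T_a$, $T_b$, $T_b \cap T_a$ is a product of circles, so its cohomology ring is exterior and generated in degree $1$, and it suffices to match the two routes on the degree-$1$ classes. I would first spell out the vertical isomorphisms explicitly: $T_a$ factors as one circle per arc $\alpha=(i,j)$ of $a$, and the iso $H(T_a)\xrightarrow{\sim} a(OH^n_C)a$ sends the fundamental class of that circle to the exterior generator $a_i=a_j \in \Ext^* V(W(a)a)$ associated with the circle of $W(a)a$ through the endpoints $i,j$; the analogous statement holds for $H(T_b)$. The iso $H(T_b\cap T_a)\xrightarrow{\sim} b(OH^n_C)a$ sends the coordinate circle through endpoint $k$ to the exterior generator associated with the component of $W(b)a$ through $k$.

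Next I would compute both paths on a degree-$1$ generator. On the cohomology side, the inclusion $T_b\cap T_a \hookrightarrow T_a$ superimposes the arc-identifications coming from $b$ onto those from $a$, so $\psi_{a;b,a}$ sends the generator attached to an arc $\alpha=(i,j)$ of $a$ to the generator attached to the unique component of $W(b)a$ containing $i$ and $j$; similarly for $\psi_{b;b,a}$. On the arc-algebra side, $\gamma_{a;b,a}(x)={_b1_a}\cdot x$ is computed via the cobordism $C_{baa}\colon W(b)a\,W(a)a \to W(b)a$. Contracting the middle pair $a\,W(a)$ uses only $n$ merges and no splits, so by Remark~\ref{rem:mergeorient} the resulting map $OF(C_{baa})$ is canonical and free of sign issues. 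Each merge quotients $V$ by a relation of the form $c_1-c_2$ identifying the circle of $W(a)a$ for an arc $\alpha$ with the circle of $W(b)a$ inheriting the endpoints of $\alpha$. Iterating, the generator $a_i\in V(W(a)a)$ is identified with the generator of $V(W(b)a)$ for the component through endpoint $i$, matching the output of $\psi_{a;b,a}$. The identical analysis applied to $C_{bba}$ (again pure merges) settles $\gamma_{b;b,a}$ against $\psi_{b;b,a}$.

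Since the top maps are ring morphisms (functoriality of cohomology) and agree with the bottom maps on a generating set, the bottom maps are automatically ring morphisms for the ring structure lifted from $H(T_b\cap T_a)$, and the whole diagram commutes. The main obstacle I anticipate is bookkeeping: one must carefully track the successive merge-quotients in the correct nested order to confirm that the resulting matching of components really reproduces the one coming from coordinate restriction. Remark~\ref{rem:mergeorient} and the absence of splits in both $C_{baa}$ and $C_{bba}$ are what keep sign ambiguities, and any dependence on the choice of $C$, out of the picture.
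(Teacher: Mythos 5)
Your argument is correct and matches the paper's proof in essence: both reduce to checking degree-$1$ generators, identify the vertical isomorphisms explicitly (circles of the tori against circles of the diagrams), and observe that the cohomology restriction maps and the merge-only cobordisms $C_{baa}$, $C_{bba}$ enact the same identifications of generators. The paper states this more tersely; you spell out why there is no sign ambiguity or $C$-dependence (no splits, plus Remark~\ref{rem:mergeorient}), which is a useful clarification but not a different route.
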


\begin{proof}
Say $H(T_b) \simeq \Ext^* \{t_1, \dots, t_n\}$. Then the map $\psi_{b;a,b}$ identify $t_k$ with $t_l$ whenever the $k^{th}$ and the $l^{th}$ end points of $b$ are in the same component of $W(b)a$. The map $\gamma_{b;b,a}$ does exactly the same on the generators of $b(OH^n_C)b$ as $C_{bba}$ merges the corresponding components.
\end{proof}

\begin{definition}
Let $I$ and $J$ be finite sets and $A_i, B_j$ be rings for all $i \in I$ and $j \in J$. Moreover, let $\beta_{i,j} : A_i \rightarrow B_j$ be ring morphisms for some pairs $(i,j) \in I \times J$ with
 $$\beta := \sum \beta_{i,j} : \prod_{i \in I} A_i \rightarrow \prod_{j \in J} B_j.$$
We define the equalizer $\Eq(\beta)$ of $\beta$ as the subring of $\prod A_i$ such that for $(a_i)_{i\in I} \in \Eq(\beta)$ we have
$$\beta_{i,j} (a_i) = \beta_{k,j} (a_k)$$
whenever $\beta_{i,j}$ and $\beta_{k,j}$ are defined.
\end{definition}

By Proposition~\ref{prop:caractoddcenter}, we get that $OZ(OH^n) = \Eq(\gamma)$ for $\gamma := \sum_{a \ne b} \gamma_{a; b,a} + \gamma_{b; b,a}$. Thus, if we define $\psi := \sum_{a \ne b} \psi_{a;b,a} + \psi_{b; b,a}$, then by Lemma~\ref{lem:diaginclusions}, we get  a commutative diagram
\begin{align}
\xymatrix{
H(\widetilde T) \ar[r]^{\kappa} & \Eq(\psi) \ar[r]  \ar[d]^{\simeq} & \bigoplus\limits_{a \in B^n} H(T_a) \ar[r]^-{\psi} \ar[d]^-{\simeq} & \bigoplus\limits_{a \ne b \in B^n} H(T_b \cap T_a) \ar[d]^{\simeq} \\
OZ(OH^n) \ar[r]_{\simeq} & \Eq(\gamma) \ar[r] &   \bigoplus\limits_{a \in B^n} a(OH^n_C)a \ar[r]_-{\gamma} &  \bigoplus\limits_{a \ne b \in B^n} b(OH^n_C)a, 
}
\label{eq:tau}
\end{align}
with $\kappa$ coming from the factorization  by $\Eq(\psi)$ of the map $\phi : H(\widetilde T) \rightarrow \bigoplus_{a \in B^n} H(T_a)$ induced by the inclusions $T_a \hookrightarrow \widetilde T$. This factorization exists since $\img \phi \subset \Eq(\psi)$. Our goal now is to prove that $\kappa$ is an epimorphism such that $\rank( OZ(OH^n)) \le \rank(H(\widetilde T))$.

\begin{definition}
We say that there is an arrow $a \rightarrow b$ for $a,b \in B^n$ if there exists a quadruplet $1 \le i < j < k < l \le 2n$ such that $(i,j), (k,l) \in a$ and $(i,l), (j,k) \in b$. Visually, we have
$$\xy  (0,-1)*++{\Cmda} ;  (-7.5,2.5)*{i} ;  (-2.5,2.5)*{j} ;  (2.5,2.5)*{k} ;  (7.5,2.5)*{l} \endxy \longrightarrow \xy (-7.5,2.5)*{i} ;  (-2.5,2.5)*{j} ;  (2.5,2.5)*{k} ;  (7.5,2.5)*{l} ; (0,-2.5)*++{\Cmdb} \endxy.$$
This leads to a partial order $a \prec b$ if there exists a chain $a \rightarrow a_1 \rightarrow  \dots \rightarrow a_k \rightarrow b$. We extend (arbitrarily) this partial order to a total order $<$ on $B^n$.
\end{definition}

\begin{lem}
For all $a \in B_n$, we have
$$T_{<a} \cap T_a = \bigcup_{b \rightarrow a} (T_b \cap T_a)$$
with $T_{<a} := \bigcup_{b < a} T_b$.
\end{lem}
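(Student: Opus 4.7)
The strategy is to translate the geometric containment $T_b \cap T_a \subseteq T_{b'}$ into a combinatorial condition on arc endpoints: on $T_b \cap T_a$, two coordinates $x_p$ and $x_q$ are forced equal exactly when $p$ and $q$ lie in the same component of $W(b)a$. Hence $T_b \cap T_a \subseteq T_{b'}$ iff every arc $(p,q)$ of $b'$ has $p, q$ in a common $W(b)a$-component. The inclusion $\supseteq$ is then immediate: $b \to a$ gives $b \prec a$, so $b < a$ and $T_b \subseteq T_{<a}$. For the inclusion $\subseteq$, fix $b \in B^n$ with $b < a$ and $b \ne a$, and seek $b' \to a$ with $T_b \cap T_a \subseteq T_{b'}$. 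The natural candidate is the matching obtained from $a$ by a single local move: given nested arcs $(i,l), (j,k) \in a$ with $i<j<k<l$, set $b' := (a \setminus \{(i,l),(j,k)\}) \cup \{(i,j),(k,l)\}$. A short check confirms $b'$ is still a crossingless matching (all other arcs of $a$ sit entirely inside $[i,l]$ or entirely outside, so they do not interfere with the new arcs) and by definition $b' \to a$. The containment $T_b \cap T_a \subseteq T_{b'}$ then reduces, via $i \equiv l$ and $j \equiv k$ already forced by the arcs of $a$, to the single combinatorial condition that $\{i,j,k,l\}$ all lie in one $W(b)a$-component.

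So the heart of the proof is to exhibit such a nested pair. I argue by contradiction: if \emph{no} $W(b)a$-component contains two arcs of $a$ that are nested in $a$, then $a \prec b$, contradicting $b < a$. Since $b \ne a$, some component $C$ contains $m \ge 2$ arcs of $a$, and by hypothesis these are pairwise non-nested, hence pairwise side-by-side; label them $(p_1, q_1), \ldots, (p_m, q_m)$ with $p_1 < q_1 < p_2 < q_2 < \cdots < p_m < q_m$. The $b$-arcs inside $C$ form a crossingless matching of $\{p_1, q_1, \ldots, p_m, q_m\}$ producing, together with the $a$-arcs, the single cycle $C$. A parity check rules out $b$-arcs of the form $(p_1, p_i)$ (they would leave an odd number of points on one side), the single-cycle requirement forces the $b$-arc at $p_1$ to reach $q_m$ (any shorter choice would disconnect the outside from $C$), and an induction on $m$ then forces the full \emph{rainbow} pattern
\[ (p_1, q_m),\ (q_1, p_2),\ (q_2, p_3),\ \ldots,\ (q_{m-1}, p_m).\]
One obtains $a \prec b$ explicitly by the chain of arrows $a = a^{(1)} \to a^{(2)} \to \cdots \to a^{(m)} = b$ in $C$, where $a^{(k+1)}$ is produced from $a^{(k)}$ by nesting the side-by-side pair $(p_1, q_k)$ and $(p_{k+1}, q_{k+1})$ into $(p_1, q_{k+1})$ and $(q_k, p_{k+1})$; each intermediate is manifestly a crossingless matching. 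Running this independently in every multi-arc component of $W(b)a$ produces the desired chain of arrows establishing $a \prec b$.

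The main obstacle is the combinatorial rainbow claim. The parity and single-cycle arguments that pin down the $b$-arcs in a multi-arc component, together with the verification that the explicit sequence of nesting arrows stays within the class of crossingless matchings at every step, constitute the bulk of the work; the crossingless condition on $b$ is what excludes the many other candidate $b$-arc configurations. Once this combinatorial claim is in hand, the translation between geometry and combinatorics handled in the first paragraph closes the argument.
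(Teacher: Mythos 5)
Your overall strategy is the one the paper has in mind (the paper's proof is a one-line reference to Khovanov's Lemma~3.4, and your geometric-to-combinatorial translation, the choice of $b'$, the reduction to finding a nested $a$-pair in one $W(b)a$-component, and the rainbow structure of the $b$-arcs inside a side-by-side component are all in the right spirit).

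However, there is a genuine gap: the claim that ``each intermediate is manifestly a crossingless matching'' when you run your chain in a component, and then ``run this independently'' over the components, is false as stated. An arc of $a$ from another component can have one endpoint to the left of $p_1$ and the other endpoint inside a gap $(q_i, p_{i+1})$; your move then makes it cross the new arc $(p_1, q_{i+1})$. Concretely, take $a=\{(1,4),(2,3),(5,8),(6,7)\}$ and $b=\{(1,8),(2,7),(3,6),(4,5)\}$. Both are crossingless, $W(b)a$ has exactly the two components $\{1,4,5,8\}$ and $\{2,3,6,7\}$, and in each component the $a$-arcs are pairwise side-by-side, so your contradiction hypothesis is met. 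Running your chain on the component $\{2,3,6,7\}$ produces $\{(1,4),(2,7),(3,6),(5,8)\}$, which is \emph{not} a crossingless matching (the arcs $(1,4)$ and $(2,7)$ cross). So the chain, as written, does not establish $a\prec b$. The conclusion $a\prec b$ is nonetheless true (here one gets a valid chain by transforming the component $\{1,4,5,8\}$ first, then $\{2,3,6,7\}$), but this means the order in which one processes the components, and even the order of moves within a component, matters. Your proof asserts the crossingless-ness of the intermediates without argument, and ``independently'' glosses over exactly the point where the argument can break. To close the gap you would need either to prescribe a correct order (e.g.\ process components in an order compatible with the nesting of the outer $b$-arcs $(p_1,q_m)$, outermost first) and prove that in that order each intermediate is crossingless, or to replace the chain by a different one whose validity is established.
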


\begin{proof}
We use similar arguments as in \cite[Lemma~3.4]{khovanov04}, replacing $S$ by $T$.
\end{proof}

\begin{lem}\label{lem:celldecomp}
There exists a cellular decomposition of $T_a$ which restricts to a decomposition of $T_{<a} \cap T_a$, which itself restricts to the decomposition of $T_b \cap T_a$ for all $b \rightarrow a$. This decomposition is such that there are $\binom{n}{k}$ cells of dimension $k$ in $T_a$.
\end{lem}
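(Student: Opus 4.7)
The plan is to mimic Khovanov's construction from \cite[Lemma~3.5]{khovanov04}, replacing $2$-spheres by circles throughout.

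First, identify $T_a$ with $T^n = (S^1)^n$ by coordinates $(y_1, \ldots, y_n)$, where $y_s$ is the common value of $x_{i_s} = x_{j_s}$ for $(i_s, j_s)$ the $s$-th arc of $a$. Under this identification, one computes directly from the definition of an arrow that for $b \to a$ coming from a quadruplet $(i,j,k,l)$ with $(i,j), (k,l) \in a$ and $(i,l), (j,k) \in b$, the two arcs $(i,j), (k,l) \in a$ are merged into a single component of $W(b)a$, while all other arcs of $a$ remain in distinct components. Hence $T_b \cap T_a$ is the codimension-one diagonal sub-torus $\{y_s = y_{s'}\} \subset T^n$, where $s, s'$ label the two merged arcs.

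Next, choose a $\Z$-basis $v_1, \ldots, v_n$ of $\pi_1(T_a) \cong \Z^n$ adapted to the family of linear forms $\ell_b := e_s^\ast - e_{s'}^\ast$ arising from the arrows $b \to a$, in the sense that for each such $\ell_b$ the kernel $\ker \ell_b$ is spanned by $n-1$ of the $v_i$. Re-parameterizing $T_a \simeq (S^1)^n$ using this basis, every diagonal $\{y_s = y_{s'}\}$ becomes a coordinate sub-torus. The minimal product CW decomposition of $(S^1)^n$, with one $k$-cell $e_I := \{y : y_i \ne \ast \Leftrightarrow i \in I\}$ for each $I \subseteq \{1,\ldots,n\}$, then has exactly $\binom{n}{k}$ cells of dimension $k$ and makes each $T_b \cap T_a$ into a subcomplex. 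Since $T_{<a} \cap T_a = \bigcup_{b \to a}(T_b \cap T_a)$ by the preceding lemma, it is a subcomplex as well, and restricts to each $T_b \cap T_a$ as required.

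The main obstacle is producing the adapted basis. I expect to proceed by induction on $n$, peeling off an outermost arc of $a$ (which participates in only a controlled set of arrows) and applying the inductive hypothesis to the resulting matching on $2n-2$ points. The diagonal vector $(1,1,\ldots,1) \in \Z^n$ lies in every $\ker \ell_b$, so one can always take it as one basis element and build the remaining vectors from the combinatorics of the arrows into $a$; compatibility then reduces to the observation that the non-crossing condition on $b$ forces the pairs of arcs of $a$ merged by distinct arrows to interact in a tree-like fashion, so that no ``triangle'' of pairwise diagonals of the type $\{y_p=y_q\},\{y_q=y_r\},\{y_p=y_r\}$ (which would be an obstruction to simultaneous straightening) can all arise together.
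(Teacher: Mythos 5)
Your plan --- identify $T_a$ with $T^n$, straighten each diagonal $T_b \cap T_a$ into a coordinate subtorus by a single unimodular change of basis, and then take the standard product CW structure --- is a sound way to prove this lemma, but it contains a substantive error in identifying which arcs of $a$ are merged. By the paper's convention, $a \to b$ means $(i,j),(k,l) \in a$ and $(i,l),(j,k) \in b$; hence for an arrow $b \to a$ the roles are reversed, and it is the \emph{nested} pair $(i,l),(j,k) \in a$ (not the parallel pair $(i,j),(k,l)$, as written in your proposal) that is merged into one component of $W(b)a$. This is not a cosmetic slip: with your identification the diagonals would come from pairs of sibling arcs of $a$, and three mutually unnested top-level arcs $\alpha_1,\alpha_2,\alpha_3$ would give arrows merging all three pairs, producing exactly the forbidden triangle $\{y_1=y_2\}$, $\{y_2=y_3\}$, $\{y_1=y_3\}$ --- so the ``tree-like'' structure you invoke is false under your reading. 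Under the correct reading, the arrows $b \to a$ correspond precisely to pairs of immediately nested arcs of $a$, i.e., to parent--child edges in the nesting forest of $a$, and hence no cycle of any length (not merely no triangle) can occur.

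Once the direction is fixed, the ``main obstacle'' you flag dissolves and no induction is required. Order the arcs of $a$ so that enclosing arcs come first, and set $z_\alpha := y_\alpha - y_{p(\alpha)}$ for every non-root arc $\alpha$ (where $p(\alpha)$ is the smallest arc of $a$ strictly containing $\alpha$), and $z_\alpha := y_\alpha$ for root arcs. The change-of-basis matrix is unipotent upper-triangular, hence in $GL_n(\Z)$, and each $T_b \cap T_a$ becomes the coordinate subtorus $\{z_\alpha = 0\}$ for the relevant child $\alpha$. The product CW structure in the $z$-coordinates then has $\binom{n}{k}$ cells of dimension $k$, and each $T_b \cap T_a$ (hence also $T_{<a} \cap T_a$, by the preceding lemma) is a subcomplex. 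The paper's own proof is a one-line reference to Khovanov's analogous Lemma~3.5; your change-of-basis formulation is a reasonable way to make that construction explicit, provided the arrow direction is corrected.
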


\begin{proof}
We construct a similar decomposition as in \cite[Lemma~3.5]{khovanov04}. We stress the fact that the cells are not in even degree only for our case.
\end{proof}

\begin{cor}\label{cor:injdecomp}
The morphism
$$H(T_{<a} \cap T_a) \rightarrow \bigoplus_{b<a} H(T_b \cap T_a),$$
induced by the inclusions $(T_b \cap T_a) \subset (T_{<a} \cap T_a)$ is injective.
\end{cor}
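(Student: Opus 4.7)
The plan is to work at the level of cellular cochains using the CW-structure provided by Lemma~\ref{lem:celldecomp}, and reduce the injectivity of the induced cohomology map to the (essentially trivial) injectivity of the corresponding map on cochains.

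I would first observe that, since $T_b \cap T_a$ is homeomorphic to $T^{|W(b)a|}$, the restricted cell decomposition from Lemma~\ref{lem:celldecomp} has the same cell counts as in the torus case, namely $\binom{|W(b)a|}{k}$ cells of dimension $k$. As this matches the rank of $H^k(T_b \cap T_a)$, the cellular chain complex of $T_b \cap T_a$ must have vanishing differential. I would then promote this to the statement that the cellular differential on $T_{<a} \cap T_a$ is also zero: by the preceding lemma, every cell of $T_{<a} \cap T_a$ lies in some closed subcomplex $T_b \cap T_a$ (with $b \to a$), so the attaching map of a $k$-cell $c \subset T_b \cap T_a$ stays inside $T_b \cap T_a$, and the boundary of $c$ computed in $T_{<a} \cap T_a$ agrees with the boundary computed in $T_b \cap T_a$, which we just saw is zero.

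Having established that cellular cohomology equals cellular cochains for both $T_{<a} \cap T_a$ and each $T_b \cap T_a$, I would finish as follows. The cellular chain map
\[
\bigoplus_{b \to a} C^{\mathrm{cell}}_\ast(T_b \cap T_a) \twoheadrightarrow C^{\mathrm{cell}}_\ast(T_{<a} \cap T_a)
\]
is surjective, since every cell of the target lies in some $T_b \cap T_a$. Dualizing this surjection of free abelian groups gives an injection of cochain complexes, and because all differentials vanish, this is simultaneously an injection of cohomology groups. Enlarging the direct sum from $\{b : b \to a\}$ to $\{b : b < a\}$ only adds factors, so the stated map is also injective.

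The main obstacle compared to Khovanov's analogous argument in~\cite{khovanov04} is that our cells occur in every dimension rather than only in even ones: vanishing of the cellular differential is not automatic and must be extracted from the torus structure of each $T_b \cap T_a$. This is exactly what the compatibility built into Lemma~\ref{lem:celldecomp} is designed to provide.
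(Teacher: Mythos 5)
Your proposal is correct and follows exactly the route the paper intends: the paper states the corollary without proof, leaving it as a consequence of Lemma~\ref{lem:celldecomp} in the style of \cite[Corollary~3.6]{khovanov04}, and the paper's own use of the cell decomposition in the proof of Lemma~\ref{lem:hom} (``the decomposition from Lemma~\ref{lem:celldecomp} has the same number of $k$-cells as the rank of $H^k(T_a)$ so characteristic functions on them are independent generators for the cohomology'') is precisely your differential-vanishing argument. You also correctly identify and address the one real difference from Khovanov's even case: since cells occur in all dimensions here, vanishing of the cellular differential is not automatic, and you extract it from the fact that each $T_b\cap T_a$ is a torus whose free cohomology rank equals the cell count in every degree.
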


We remark that $T_{\le a} = T_{<a} \cup T_a$ such that there is a Mayer-Vietoris sequence:
\begin{equation}
\xymatrix{
& \dots \ar[r] &  H^{m-1}(T_{a} \cap T_{<a}) \ar[dll]_{\delta}   \\
H^m(T_{a} \cup T_{<a})\ar[r]  &  H^m(T_{a}) \oplus H^m(T_{<a})\ar[r]  &  H^m(T_{a} \cap T_{<a})  \ar[dll]_\delta \\
  \ar[r] H^{m+1}(T_{a} \cup T_{<a})  & \dots
}\label{eq:mv}\end{equation}

\begin{prop}\label{prop:exactseq}
The following sequence is exact:
$$\xymatrix{
H(T_{\le a}) \ar[r]^-{\phi}  & \bigoplus_{b \le a} H(T_b) \ar[r]^-{\psi^-} & \bigoplus_{b < c \le a} H(T_b \cap T_c),
}$$ 
where $\phi$ is induced by the morphisms $T_b \hookrightarrow T_{\le a}$, and where we define
$$\psi^- := \sum_{b < c \le a} (\psi_{b,c} - \psi_{c,b}),$$
with
$$\psi_{b,c} = \psi_{b;b,c} : H(T_b) \rightarrow H(T_b \cap T_c),$$
induced by the inclusion $(T_b \cap T_c) \hookrightarrow T_b$.
\end{prop}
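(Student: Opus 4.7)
The plan is to proceed by induction on $a$ with respect to the total order $<$ on $B^n$, using the Mayer--Vietoris sequence~(\ref{eq:mv}) together with Corollary~\ref{cor:injdecomp} as the main tools. The inclusion $\img \phi \subseteq \ker \psi^-$ is immediate from functoriality: for every $z \in H(T_{\le a})$, both $\psi_{b,c}(z|_{T_b})$ and $\psi_{c,b}(z|_{T_c})$ equal $z|_{T_b \cap T_c}$. So the content is in the reverse inclusion. When $a$ is the minimum of $B^n$, one has $T_{\le a} = T_a$, the right-hand sum is empty, and $\phi$ is the identity, which settles the base case.

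For the inductive step, set $a' := \max\{b : b < a\}$ so that $T_{<a} = T_{\le a'}$ and the inductive hypothesis applies at $a'$. Given $(x_b)_{b \le a} \in \ker \psi^-$, the truncation $(x_b)_{b < a}$ still lies in the kernel of the corresponding $\psi^-$, so induction produces $y \in H(T_{<a})$ with $y|_{T_b} = x_b$ for each $b < a$. The task is to combine $y$ and $x_a$ into an element of $H(T_{\le a})$. The Mayer--Vietoris sequence associated to the decomposition $T_{\le a} = T_{<a} \cup T_a$ is exact at the middle term $H(T_a) \oplus H(T_{<a})$, so it suffices to verify the equality $x_a|_{T_a \cap T_{<a}} = y|_{T_a \cap T_{<a}}$ inside $H(T_a \cap T_{<a})$. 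At this point Corollary~\ref{cor:injdecomp} enters decisively: the injection $H(T_a \cap T_{<a}) \hookrightarrow \bigoplus_{b < a} H(T_b \cap T_a)$ reduces the equality to its further restrictions into each $H(T_b \cap T_a)$. The restriction of $y$ along $T_b \cap T_a \subset T_b \subset T_{<a}$ equals $\psi_{b,a}(x_b)$, while the restriction of $x_a$ along $T_b \cap T_a \subset T_a$ equals $\psi_{a,b}(x_a)$, and these coincide because $(x_b)_{b \le a} \in \ker \psi^-$. Mayer--Vietoris then yields $z \in H(T_{\le a})$ restricting to $x_a$ on $T_a$ and to $y$ on $T_{<a}$, and a final functoriality check gives $z|_{T_b} = y|_{T_b} = x_b$ for all $b < a$, so $\phi(z) = (x_b)_{b\le a}$.

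The main obstacle is not the abstract Mayer--Vietoris gluing, which is automatic once compatibility is in hand, but rather verifying that compatibility on the typically singular space $T_a \cap T_{<a}$. Corollary~\ref{cor:injdecomp} is precisely engineered to bypass that difficulty by letting one test the equality inside each smooth pair-intersection $T_b \cap T_a$ separately. I would therefore expect the written proof to consist of recording the induction, invoking the Mayer--Vietoris exact sequence, and closing the compatibility check with Corollary~\ref{cor:injdecomp}, without any further case analysis.
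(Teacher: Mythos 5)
Your proposal is correct and matches the paper's proof. The paper simply records that the argument is an induction on $a$ using Corollary~\ref{cor:injdecomp}, deferring to Khovanov's Proposition~3.8 in \cite{khovanov04}, and your write-up is exactly that argument filled in: Mayer--Vietoris supplies exactness at the middle term once the compatibility of $x_a$ and the inductively constructed $y$ is checked on $T_a\cap T_{<a}$, and Corollary~\ref{cor:injdecomp} reduces that compatibility check to the individual intersections $T_b\cap T_a$, where it follows directly from $(x_b)\in\ker\psi^-$.
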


\begin{proof}
The proof is an induction on $a$ using Corollary~\ref{cor:injdecomp} like in \cite[Proposition~3.8]{khovanov04} with the only difference that we lose the left part $0 \rightarrow$ in our sequence.
\end{proof}

\begin{prop}\label{prop:epi}
There is an epimorphism of superrings 
$$k : H(\widetilde T) \rightarrow OZ(OH^n).$$
\end{prop}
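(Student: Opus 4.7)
The plan is to define $k$ as the composition of the map $\kappa$ appearing in diagram~(\ref{eq:tau}) with the vertical isomorphism $\Eq(\psi) \simeq OZ(OH^n)$, and then to read off surjectivity of $\kappa$ from the exact sequence of Proposition~\ref{prop:exactseq}. Concretely, let $a_{\max}$ be the maximal element of $B^n$ under the total order $<$, so that $T_{\le a_{\max}} = \widetilde T$. Applying Proposition~\ref{prop:exactseq} to $a = a_{\max}$ gives the exact sequence
$$
H(\widetilde T) \xrightarrow{\ \phi\ } \bigoplus_{b \in B^n} H(T_b) \xrightarrow{\ \psi^-\ } \bigoplus_{b<c} H(T_b \cap T_c),
$$
and by definition of the equalizer we have $\ker \psi^- = \Eq(\psi)$. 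Exactness therefore means that $\phi$ factors through an epimorphism $H(\widetilde T) \twoheadrightarrow \Eq(\psi)$ followed by the inclusion $\Eq(\psi) \hookrightarrow \bigoplus_b H(T_b)$; this factorization is exactly the morphism $\kappa$ from~(\ref{eq:tau}).

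Composing $\kappa$ with the ring isomorphism $\Eq(\psi) \simeq \Eq(\gamma) \simeq OZ(OH^n)$ built from Lemma~\ref{lem:diaginclusions} and Proposition~\ref{prop:caractoddcenter} gives the desired surjection $k$. To see that $k$ is a (super)ring morphism and not merely a homomorphism of abelian groups, observe that $\phi$ is induced by the topological inclusions $T_a \hookrightarrow \widetilde T$, hence is a ring morphism on cohomology; the vertical isomorphisms of~(\ref{eq:tau}) are ring (in fact superring) isomorphisms by the identification $H(T_b \cap T_a) \simeq {_b}(OH^n_C){_a}$ as exterior algebras on the connected components of $W(b)a$, and the superdegree on both sides corresponds to the usual cohomological degree divided by two.

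The only step that requires some care is the identification $\ker \psi^- = \Eq(\psi)$: one needs to match the signs in $\psi^- = \sum_{b<c}(\psi_{b,c} - \psi_{c,b})$ with the symmetric condition $\psi_{a;b,a}(x_a) = \psi_{b;b,a}(x_b)$ defining $\Eq(\psi)$, but this is immediate from the definitions once one notes that $\psi_{b,c}$ and $\psi_{c;b,c}$ denote the same map $H(T_c)\to H(T_b\cap T_c)$ up to the naming convention introduced just before Proposition~\ref{prop:exactseq}. With this identification in hand, surjectivity of $\kappa$ is precisely the statement of exactness at the middle term of the sequence above, and the proposition follows.
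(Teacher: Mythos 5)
Your argument is correct and coincides with the paper's proof: take $a$ maximal in Proposition~\ref{prop:exactseq} to obtain the exact sequence for $\widetilde T$, identify $\Eq(\psi)=\ker(\psi^-)=\img(\phi)$ so that $\kappa$ is surjective, and compose with the vertical isomorphisms of diagram~(\ref{eq:tau}) to get $k$. The additional remarks you make about the ring structure and the sign convention in $\psi^-$ are consistent with, and merely spell out, what the paper leaves implicit.
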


\begin{proof}
We take $a$ maximal in Proposition~\ref{prop:exactseq} giving an exact sequence
$$\xymatrix{
H(\widetilde T) \ar[r]^-{\phi}  & \bigoplus_{b} H(T_b) \ar[r]^-{\psi^-} & \bigoplus_{b < c} H(T_b \cap T_c)
}$$ 
and we observe that by definition
$$\Eq(\psi) = \ker(\psi^-) = \img(\phi)$$
so that $\kappa : H(\widetilde T) \rightarrow \Eq(\psi)$ from diagram (\ref{eq:tau}) is surjective. 
\end{proof}

Now we show that the rank of $H(\widetilde T)$ is the same as the rank of $OH(\mathfrak B_{n,n}, \Z)$. By \cite[Corollary~3.9]{laudarussell14}, we already know that  $\rank(OH(\mathfrak B_{n,n}, \Z)) = \binom{2n}{n}$.

%

\begin{lem}\label{lem:hom}
For all $k \ge 0$, the cohomology groups
\begin{align*}
H^k(T_a \cup T_{<a}), H^k(T_a), H^k(T_{<a}) \text{ and } H^k(T_a \cap T_{<a})
\end{align*}
are free of ranks satisfying the relation
$$\rank(H^k(T_a \cup T_{<a})) = \rank(H^k(T_a)) + \rank(H^k(T_{<a})) - \rank(H^k(T_a \cap T_{<a})).$$
\end{lem}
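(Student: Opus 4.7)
The plan is to induct on $a$ with respect to the total order on $B^n$, using the Mayer--Vietoris sequence (\ref{eq:mv}) as the main tool. For the base case $a = \min B^n$ we have $T_{<a} = \emptyset$, and the claim reduces to the classical computation of $H^*(T^n)$.

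For the inductive step, freeness of three of the four groups can be handled quickly. First, $H^*(T_a)$ is free since $T_a \simeq T^n$. Second, $H^*(T_{<a})$ is free by the inductive hypothesis applied to the immediate predecessor of $a$ in the total order. Third, by Corollary~\ref{cor:injdecomp}, $H^*(T_a \cap T_{<a})$ embeds into $\bigoplus_{b \to a} H^*(T_b \cap T_a)$; each $T_b \cap T_a$ is itself a torus with free cohomology, so $H^*(T_a \cap T_{<a})$ sits inside a free $\Z$-module and is therefore free.

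The heart of the argument is to show that the connecting homomorphism $\delta$ in (\ref{eq:mv}) vanishes in every degree. This simultaneously gives freeness of $H^*(T_a \cup T_{<a})$ (as the kernel of a map between free $\Z$-modules) and the desired rank relation via additivity of rank in the resulting short exact sequence
\[ 0 \to H^m(T_a \cup T_{<a}) \to H^m(T_a) \oplus H^m(T_{<a}) \to H^m(T_a \cap T_{<a}) \to 0. \]
To prove $\delta = 0$, I would exploit Lemma~\ref{lem:celldecomp}: the cellular decomposition of $T_a$ has exactly $\binom{n}{k}$ cells of dimension $k$, matching the Betti numbers of $T^n$, so the cellular coboundary on $C^*(T_a)$ must vanish identically (otherwise $\rank H^k(T_a)$ would drop below $\binom{n}{k}$). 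Since this decomposition restricts to one on $T_a \cap T_{<a}$, one obtains a surjection of cellular cochain complexes $C^*(T_a) \twoheadrightarrow C^*(T_a \cap T_{<a})$; this is a chain map whose source has zero differential, so the target differential vanishes as well. Consequently the pullback $H^*(T_a) \to H^*(T_a \cap T_{<a})$ coincides with the surjective cochain restriction, which forces $\delta = 0$ already through the portion of $\psi^-$ coming from the $T_a$ summand alone.

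The main obstacle is to confirm that the cellular decomposition provided by Lemma~\ref{lem:celldecomp} is a genuinely \emph{compatible} CW decomposition in the sense required for the cochain restriction to be surjective, and that the cell count at each dimension forces vanishing of the coboundary; once these two points are verified, the rank identity is little more than an application of Mayer--Vietoris combined with inclusion--exclusion on cell counts.
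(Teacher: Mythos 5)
Your proof is correct and takes essentially the same route as the paper's: induction along the total order, the Mayer--Vietoris sequence, and the cell decomposition of Lemma~\ref{lem:celldecomp} to show the restriction $H^k(T_a)\to H^k(T_a\cap T_{<a})$ is surjective (hence $\delta=0$). Your chain-level argument---that the cellular coboundary on $C^*(T_a)$ is zero because the cell count already saturates $\rank H^k(T_a)$, and that this vanishing is inherited by $C^*(T_a\cap T_{<a})$ via the surjective restriction of cochain complexes---is a slightly more explicit rendering of the paper's remark that "characteristic functions on the cells are independent generators"; the extra route to freeness of $H^*(T_a\cap T_{<a})$ through Corollary~\ref{cor:injdecomp} is sound (a submodule of a free $\Z$-module is free) but redundant given the cell-decomposition argument you use anyway.
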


\begin{proof}
The proof is an induction on $B^n$. We consider the Mayer-Vietoris sequence (\ref{eq:mv})
and we claim that the morphisms $H^k(T_a) \rightarrow H^k(T_a \cap T_{<a}) $ are surjective and thus, that the boundary operators $\delta$ are zero. Indeed, the decomposition from Lemma~\ref{lem:celldecomp} has the same number of $k$-cells as the rank of $H^k(T_a)$ so characteristic functions on them are independent generators for the cohomology. Seeing that the cell decomposition restricts to $T_a \cap T_{<a}$, the cohomology groups $H^k(T_a \cap T_{<a})$ are free of rank given by the cell decomposition and the morphisms are surjective. 
This claim gives us an isomorphism
$$H^k(T_a \cap T_{<a}) \simeq \frac{H^k(T_a) \oplus H^k(T_{<a})}{H^k(T_a \cup T_{<a})}.$$
If $a$ is minimal, then the lemma is trivial. For the general case, we get the result by induction since $H^k(T_a)$, $H^k(T_{<a})$ and $H^k(T_a \cap T_{<a})$ are free and so is $H^k(T_a \cup T_{<a})$.
\end{proof}

\begin{prop}\label{prop:grouprank}
$H(\widetilde T)$ is a free abelian group of rank
$$\rank(H(\widetilde T)) = \binom{2n}{n}.$$
\end{prop}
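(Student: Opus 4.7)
The plan is to mimic Khovanov's argument from \cite[Proposition~3.10]{khovanov04} and replace spheres by circles throughout. Freeness will come for free from Lemma~\ref{lem:hom}, so the only substantive task is the rank count. I would induct on $a \in B^n$ with respect to the total order $<$: by Lemma~\ref{lem:hom}, we have the Mayer--Vietoris identity
$$\rank H(T_{\le a}) = \rank H(T_a) + \rank H(T_{<a}) - \rank H(T_a \cap T_{<a}),$$
with each summand free, so it suffices to keep track of ranks. The base case is immediate: when $a$ is the minimum, $T_{\le a} = T_a \simeq T^n$, giving rank $2^n$. For $\widetilde T = T_{\le a_{\max}}$, summing telescopically over $a \in B^n$ one obtains
$$\rank H(\widetilde T) = \sum_{a \in B^n} \bigl(\rank H(T_a) - \rank H(T_a \cap T_{<a})\bigr).$$

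The unknown quantity is $\rank H(T_a \cap T_{<a})$, and here I would use the cell decomposition of Lemma~\ref{lem:celldecomp}, which restricts to a cell decomposition of $T_a \cap T_{<a}$. The key point from the proof of Lemma~\ref{lem:hom} is that the cell decomposition is perfect: the number of $k$-cells of $T_a$ equals $\rank H^k(T_a)$, and the same holds for all the intersections involved (because the cells live in a single homotopy class and the characteristic functions give free generators). Consequently, the rank of $H(T_a \cap T_{<a})$ equals the number of cells of $T_a$ that are already contained in $T_{<a}$, so the difference $\rank H(T_a) - \rank H(T_a \cap T_{<a})$ simply counts the \emph{new} cells contributed by $T_a$. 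Thus the entire rank count reduces to counting, combinatorially, how many cells are genuinely new at each stage of the filtration.

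To finish, I would transport Khovanov's bijection from \cite[Proof of Proposition~3.10]{khovanov04} to our torus setting. In his proof, the new cells of $S_a$ at stage $a$ are put in bijection with a specific family of subsets of $\{1,\dots,2n\}$, and summing over $a \in B^n$ yields exactly the size-$n$ subsets of $\{1,\dots,2n\}$, giving $\binom{2n}{n}$. The combinatorics of Lemma~\ref{lem:celldecomp} (which the paper states follows Khovanov's lemma verbatim, with the caveat that cell dimensions are no longer necessarily even) is set up so that exactly the same bijection applies here; only the dimensions of the cells shift, which does not affect the total rank.

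The main obstacle will be the bookkeeping in the cell-counting step: one must verify that the restriction of the cell decomposition of $T_a$ to $T_{<a} \cap T_a$ is precisely Khovanov's, so that the bijection with size-$n$ subsets carries over without modification. Once this compatibility is checked, the rank computation is automatic. Since the paper explicitly parallels Khovanov's construction step by step, I expect the verification to amount to rewriting his argument with $S^2$ replaced by $S^1$, being careful only that freeness of the intermediate cohomology groups (now granted by Lemma~\ref{lem:hom}) is invoked to justify the inclusion--exclusion.
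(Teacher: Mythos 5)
Your proposal is correct and follows essentially the same route as the paper: use Lemma~\ref{lem:hom} to reduce the computation telescopically to counting the cells introduced at each stage $T_a \setminus T_{<a}$, invoke the perfect cell decomposition of Lemma~\ref{lem:celldecomp} to identify ranks with cell counts, and then transfer Khovanov's combinatorial bijection with size-$n$ subsets of $\{1,\dots,2n\}$ to obtain $\binom{2n}{n}$. The paper carries out precisely this argument (it cites Khovanov and a companion lemma for the final cell count), so no further commentary is needed.
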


\begin{proof}
We obtain a cellular partition of $\widetilde T$ by first taking the cellular decomposition of $T_{a_0}$ from Lemma~\ref{lem:celldecomp}, with $a_0 \in B^n$ the minimal element, and then by adding the cells $T_{a_m} \setminus T_{<a_m}$ for all $a_m \in B^n$ following the total order. We claim that the rank of $H(\widetilde T)$ is given by the number of cells of the partition. 
Indeed, all cohomology groups are free and the relation from the Lemma~\ref{lem:hom} gives us the claim since $\rank(H(T_{a_m})) - \rank(H(T_{a_m} \cap T_{<a_m}))$ counts exactly the number of cells of $T_{a_m} \setminus T_{<a_m}$.
Finally, like in \cite{khovanov04} (and proved in \cite[Lemma~3.64]{naisse15}), the number of cells is $\binom{2n}{n}$ and this concludes the proof.
\end{proof}

\begin{cor}\label{cor:ranks}
$OZ(OH^n)$ is a free abelian group and we have
$$\rank(OZ(OH^n)) = \rank(OH(\mathfrak B_{n,n}, \Z)).$$
\end{cor}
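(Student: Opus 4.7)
The plan is to sandwich the rank of $OZ(OH^n)$ between two copies of $\binom{2n}{n}$, one coming from the injective map $h$ constructed earlier and the other from the epimorphism $\kappa$ built from topology.

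First I would dispose of freeness. Each group $OF(W(b)a)$ is an exterior algebra on a finite-rank free $\Z$-module, so it is itself free over $\Z$; consequently $OH^n = \bigoplus_{a,b\in B^n} b(OH^n)a$ is a free abelian group. Since $OZ(OH^n)$ is a subgroup of $OH^n$ (and in fact a graded subgroup, so the argument can be applied degree by degree on finite-rank pieces), it is free abelian, and its rank is well defined.

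Next I would get the upper bound. By Proposition~\ref{prop:epi} there is a surjective superring morphism $\kappa : H(\widetilde T) \to OZ(OH^n)$, and by Proposition~\ref{prop:grouprank} the domain $H(\widetilde T)$ is a free abelian group of rank $\binom{2n}{n}$. Since surjections of abelian groups cannot increase rank, this gives
\[
\rank(OZ(OH^n)) \le \rank(H(\widetilde T)) = \binom{2n}{n}.
\]
For the lower bound I would use Proposition~\ref{prop:injective}: the algebra homomorphism $h : OH(\mathfrak B_{n,n}, \Z) \to OZ(OH^n)$ is injective, and $OH(\mathfrak B_{n,n}, \Z)$ is free of rank $\binom{2n}{n}$ by \cite[Corollary~3.9]{laudarussell14}. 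Hence
\[
\rank(OZ(OH^n)) \ge \rank(OH(\mathfrak B_{n,n}, \Z)) = \binom{2n}{n}.
\]

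Combining the two inequalities forces $\rank(OZ(OH^n)) = \binom{2n}{n} = \rank(OH(\mathfrak B_{n,n}, \Z))$, which is the desired equality. There is essentially no obstacle here: all of the real work has been done in the preceding propositions (the injectivity of $h$, the construction of $\widetilde T$ with its cell decomposition, and the Mayer--Vietoris/rank computation). The corollary is purely an assembly step, so my only care would be to make sure freeness is justified before invoking ranks.
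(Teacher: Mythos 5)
Your proof is correct and follows exactly the paper's own argument: the lower bound comes from the injectivity of $h$ (Proposition~\ref{prop:injective}) together with \cite[Corollary~3.9]{laudarussell14}, and the upper bound comes from the surjection $\kappa : H(\widetilde T) \twoheadrightarrow OZ(OH^n)$ (Propositions~\ref{prop:epi} and~\ref{prop:grouprank}). The only addition is that you explicitly justify freeness of $OZ(OH^n)$ as a subgroup of the finite-rank free abelian group $OH^n$, which the paper leaves implicit; this is a reasonable tightening but not a different route.
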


\begin{proof}
By Proposition~\ref{prop:injective}, we have
$$\rank(OZ(OH^n)) \ge \rank(OH(\mathfrak B_{n,n}, \Z))$$
and by Propositions~\ref{prop:epi}~and~\ref{prop:grouprank}, and~\cite[Corollary~3.9]{laudarussell14}, we get
$$\rank(OZ(OH^n)) \le \rank(H(\widetilde T)) =  \binom{2n}{n} =  \rank(OH(\mathfrak B_{n,n}, \Z)).$$
The two inequalities together conclude the proof.
\end{proof}

\subsection*{Proof of Theorem \ref{thm:iso}}

In order to prove Theorem \ref{thm:iso}, we construct a surjective map $\pi : OH(\mathfrak B_{n,n}, \Z) \rightarrow H(\widetilde T)$ such that the following diagram commutes:
\[
\xymatrix{
OH(\mathfrak B_{n,n}, \Z)   \ar[rr]^{h} \ar[rd]_\pi && OZ(OH^n) \\
&H(\widetilde T). \ar[ur]_{k} &
}
\]

Following \cite[Section~4]{khovanov04}, let $\imath^* : H(T ^{2n}) \rightarrow H(\widetilde T)$ be the homomorphism induced by the inclusion $\widetilde T \subset T^{2n}$. 

\begin{lem}
The map $\imath^* : H(T ^{2n}) \twoheadrightarrow H(\widetilde T)$ is an epimorphism.
\end{lem}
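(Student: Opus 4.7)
My plan is to prove surjectivity by induction on the total order on $B^n$, showing that $H(T^{2n}) \to H(T_{\le a})$ is surjective for every $a \in B^n$; the lemma then follows by taking $a$ maximal so that $T_{\le a} = \widetilde T$.

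For the base case, when $a = a_0$ is the minimal element, $T_{\le a_0} = T_{a_0}$ is a subtorus of $T^{2n}$ cut out by identifying coordinates in pairs according to the arcs of $a_0$. The induced restriction on cohomology is a surjection of exterior algebras, sending each degree-$1$ generator $t_i$ to the generator corresponding to the arc containing the $i$-th coordinate (each generator is hit by at least one $t_i$).

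For the inductive step, Lemma~\ref{lem:hom} forces the Mayer--Vietoris coboundary for the decomposition $T_{\le a} = T_a \cup T_{<a}$ to vanish, producing a short exact sequence
\begin{equation*}
0 \to H(T_{\le a}) \to H(T_a) \oplus H(T_{<a}) \to H(T_a \cap T_{<a}) \to 0
\end{equation*}
which identifies $H(T_{\le a})$ with the fiber product. A class $\eta$ is identified with a compatible pair $(\alpha,\beta)$; the inductive hypothesis lifts $\beta$ to some $\omega_1 \in H(T^{2n})$, and replacing $\eta$ by $\eta - \imath^*(\omega_1)|_{T_{\le a}}$ reduces the problem to lifting an element $\gamma \in H(T_a)$, vanishing on $T_a \cap T_{<a}$, to a class $\omega_2 \in H(T^{2n})$ with $\omega_2|_{T_a} = \gamma$ and $\omega_2|_{T_{<a}} = 0$.

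The main obstacle is this last lifting. I would handle it cellularly: by Lemma~\ref{lem:celldecomp} the given cell decomposition of $T_a$ restricts to $T_a \cap T_{<a}$, so the subgroup of classes in $H(T_a)$ vanishing on $T_a \cap T_{<a}$ is freely spanned by duals of cells of $T_a$ not lying in any $T_b$ with $b \to a$. Each such cell is indexed by a subset $S$ of the arcs of $a$ that is combinatorially incompatible with every $b \to a$, in the spirit of Section~\ref{sec:ospringer} (compare Lemmas~\ref{lem:containsarc}--\ref{lem:arcinR}). For each such $S$, choose one endpoint $i(j)$ from each arc $j \in S$ and form the wedge $\bigwedge_{j \in S} t_{i(j)} \in H(T^{2n})$; the incompatibility condition ensures that for every $b \to a$ two of the chosen endpoints lie in a common arc of $b$, so the wedge restricts to zero on $T_b$, hence on all of $T_{<a}$. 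Summing these explicit lifts with the coefficients appearing in $\gamma$ produces the required $\omega_2$, closing the induction.
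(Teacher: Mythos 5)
Your strategy is genuinely different from the paper's. The paper works at the \emph{homology} level: it observes that the cells of $\widetilde T$ are given by linearly independent diagonal subtori of $T^{2n}$ whose fundamental classes can be completed to a basis of $H_k(T^{2n})$, so that $H_k(\widetilde T) \hookrightarrow H_k(T^{2n})$ is split injective, and then dualizes via the universal coefficient theorem to get surjectivity on $H^k$. Your proposal instead runs an induction along the filtration $T_{\le a}$ using Lemma~\ref{lem:hom} to kill the Mayer--Vietoris boundary, reduces to lifting a class $\gamma \in H(T_a)$ that vanishes on $T_a \cap T_{<a}$, and tries to produce the lift explicitly. The setup (short exact sequence, fiber-product description, subtraction of $\imath^*(\omega_1)$) is correct, but the final lifting step has two genuine gaps.

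First, you need $\omega_2|_{T_{<a}} = 0$, i.e.\ vanishing on $T_{b'}$ for \emph{every} $b' < a$ in the chosen total order. You only argue vanishing on $T_b$ for $b \to a$, and the total order strictly extends the partial order generated by $\to$. The unnumbered lemma preceding Lemma~\ref{lem:celldecomp} controls only the intersection $T_{<a}\cap T_a = \bigcup_{b\to a} T_b\cap T_a$, not $T_{<a}$ itself, so ``hence on all of $T_{<a}$'' does not follow. Second, and more seriously, the proposed lift $\bigwedge_{j\in S} t_{i(j)}$ does not restrict to the dual of the corresponding cell. Already for $n=2$ with $a$ the nested matching and $b$ the other one, the decomposition of $T_a\cong T^2$ from Lemma~\ref{lem:celldecomp} restricts to the diagonal circle $T_a\cap T_b$, and the dual cochain of the $1$-cell \emph{not} contained in $T_a\cap T_b$ is $\pm(\alpha_1-\alpha_2)$, where $\alpha_1,\alpha_2$ are the two degree-$1$ generators of $H^1(T_a)$. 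No single $t_i$ lifts this class: each $t_i$ restricts on $T_a$ to $\alpha_1$ or $\alpha_2$, hence restricts nontrivially to $T_a\cap T_b$ (and your argument collapses anyway since with $|S|=1$ there is no pair of chosen endpoints to place in a common arc of $b$). A working lift here is $t_1-t_2$. In general the dual cochains are \emph{signed sums} of monomials in the $t_i$, not single monomials; this is exactly where the alternating signs in the Lauda--Russell relations (Section~\ref{sec:ospringer}) come from, and the combinatorics needed to make your induction close is correspondingly more delicate than the sketch allows.
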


\begin{proof}
The map induced on the homology $H_1(\widetilde T) \hookrightarrow H_1(T^{2n})$ is split injective, and both groups are free. Indeed the cellular decomposition of $\widetilde T$ has all 1-dimensional cells given by linearly independent diagonals in $T^{2n}$, which can be completed into a basis for $H_1(T^{2n})$. Hence, by the universal coefficient theorem the map induced on the cohomology $H^1(T^{2n}) \twoheadrightarrow H^1(\widetilde T)$ is surjective. 
The same applies for all $H^k(T^{2n}) \rightarrow H^k(\widetilde T)$, with linearly independent hyperplanes of dimension $k$. Indeed cells of $T_a \setminus T_{<a}$ are given by hyperplanes with at least one generating vector linearly independent to the cells of $T_{<a}$. This concludes the proof.
%
%
\end{proof}


 We write $X_i \in H(\widetilde T)$ for the image by $\imath^*$ of the generating element in the cohomology of $T^{2n}$ given by the characteristic function on the $i^{th}$ component. In other words, if we say $p_i$ is the projection $T^{2n} \rightarrow S^1$ onto the $i^{th}$ component, then $X_i = i^* \circ p_i^* (X)$, where $H(S^1) =  \Ext^* \{X\}$. 

Then we define the ring homomorphism $\pi_0 : OPol_{2n} \rightarrow H(\widetilde T)$ by $\pi_0(x_i) = X_i$. Clearly we have an isomorphism $H(T^{2n}) \cong \Ext^*\{p_1^*(X), \dots, p_{2n}^*(X)\}$, and so $\pi_0$ is surjective by the lemma above.

\begin{lem}
The map $k : H(\widetilde T)  \rightarrow OZ(OH^n)$ is an isomorphism.
\end{lem}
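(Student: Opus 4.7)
The plan is a direct rank-counting argument that relies only on facts already assembled in this section. Since $k$ is surjective by Proposition~\ref{prop:epi}, it remains only to verify injectivity, and this will be automatic once I establish that $H(\widetilde T)$ and $OZ(OH^n)$ are finitely generated free abelian groups of the same rank.

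First I would recall that $H(\widetilde T)$ is a free abelian group of rank $\binom{2n}{n}$ by Proposition~\ref{prop:grouprank}. For the target, I would appeal to Proposition~\ref{prop:inccenter} to embed $OZ(OH^n)$ as a subgroup of $\bigoplus_{a \in B^n} a(OH^n)a$, which by Proposition~\ref{prop:extalg} is isomorphic to the finitely generated free abelian group $\bigoplus_{a \in B^n} \Ext^* \Z^n$. Since subgroups of finitely generated free abelian groups are themselves free, $OZ(OH^n)$ is free of finite rank, and by Corollary~\ref{cor:ranks} this rank equals $\binom{2n}{n}$ as well.

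The last step is the standard fact that any surjective $\Z$-linear map between free abelian groups of the same finite rank is an isomorphism. Explicitly, the short exact sequence
\[
0 \longrightarrow \ker k \longrightarrow H(\widetilde T) \longrightarrow OZ(OH^n) \longrightarrow 0
\]
splits because $OZ(OH^n)$ is free, so that $H(\widetilde T) \cong \ker k \oplus OZ(OH^n)$; comparing ranks forces $\rank(\ker k) = 0$, hence $\ker k = 0$ and $k$ is an isomorphism.

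There is no real obstacle remaining at this stage: all the hard work has already been invested in Propositions~\ref{prop:epi} and~\ref{prop:grouprank}, and in Corollary~\ref{cor:ranks}. The only conceptual observation is that these three results, taken together, already contain everything needed, so the lemma is essentially a one-line deduction.
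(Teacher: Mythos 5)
Your proof is correct and is essentially the same argument the paper gives, just spelled out in more detail: the paper's proof is the one-line observation that $k$ is an epimorphism between free abelian groups of the same rank, citing precisely the ingredients you list (Proposition~\ref{prop:epi}, Proposition~\ref{prop:grouprank}, Corollary~\ref{cor:ranks}). Your extra step of deducing freeness of $OZ(OH^n)$ from the embedding into $\bigoplus_a a(OH^n)a$ is a welcome clarification, since Corollary~\ref{cor:ranks} asserts freeness somewhat implicitly.
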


\begin{proof}
It is an epimorphism between free abelian groups of the same rank.
\end{proof}

From this, we deduce the homomorphism $j^* : H(\widetilde T) \rightarrow \bigoplus_{a \in B^n} H(T_a) \cong \bigoplus_{a \in B^n} a(OH^n_C)a$, induced by the inclusions $T_a \subset \widetilde T$, is an injection.
Moreover, by construction of $\pi_0$, the diagram
\[
\xymatrix{
OPol_{2n} \ar[rr]^{h_0} \ar[rd]_{\pi_0} && \bigoplus_{a \in B^n} a(OH^n_C)a \\
&H(\widetilde T) \ar[ur]_{j^*} &
}
\]
commutes.

\begin{lem}
We have $\pi_0(OC_n) = 0$.
\end{lem}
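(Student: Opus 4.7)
The plan is to exploit the commutative triangle that was set up just before the lemma, together with two ingredients already in hand: the injectivity of $j^*$ and the vanishing of $h_0$ on $OC_n$.

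First I would recall that by construction of $\pi_0$, the diagram
\[
\xymatrix{
OPol_{2n} \ar[rr]^-{h_0} \ar[rd]_{\pi_0} && \bigoplus_{a \in B^n} a(OH^n_C)a \\
&H(\widetilde T) \ar[ur]_{j^*} &
}
\]
commutes, i.e.\ $h_0 = j^* \circ \pi_0$. Next I would invoke the preceding lemma, which shows that $k : H(\widetilde T) \to OZ(OH^n)$ is an isomorphism (epimorphism between free $\Z$-modules of the same rank, by Proposition~\ref{prop:epi}, Proposition~\ref{prop:grouprank} and Corollary~\ref{cor:ranks}); from this one deduces that the composite $j^* : H(\widetilde T) \to \bigoplus_{a \in B^n} a(OH^n_C)a$ induced by the inclusions $T_a \subset \widetilde T$ is injective, exactly as stated just above the lemma.

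Finally I would apply Proposition~\ref{prop:ocnkilled}: for every generator $\epsilon_r^S \in OC_n$ and every $a \in B^n$ one has $h_a(\epsilon_r^S) = 0$, so summing over $a$ yields $h_0(\epsilon_r^S) = 0$. From $h_0 = j^* \circ \pi_0$ and the injectivity of $j^*$ we conclude $\pi_0(\epsilon_r^S) = 0$, and since $OC_n$ generates $OI_n$ this gives $\pi_0(OC_n) = 0$.

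There is really no obstacle here: the work was already done in Proposition~\ref{prop:ocnkilled} (the odd partially symmetric functions die in each local piece $a(OH^n_C)a$) and in the rank computation that makes $k$ an isomorphism and hence $j^*$ injective. The present lemma is essentially the observation that these two facts combine through the factorization of $h_0$ to force $\pi_0$ to kill $OC_n$.
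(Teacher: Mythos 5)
Your proof is correct and is essentially the paper's argument: factor $h_0 = j^* \circ \pi_0$ through the commutative triangle, note $h_0(OC_n)=0$ by Proposition~\ref{prop:ocnkilled}, and conclude from the injectivity of $j^*$. The only superfluous step is the final remark that ``$OC_n$ generates $OI_n$'' --- once you have $\pi_0(\epsilon_r^S)=0$ for every $\epsilon_r^S\in OC_n$ you already have $\pi_0(OC_n)=0$, so no generation statement is needed.
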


\begin{proof}
By the commutativity of the diagram above, we get $j^* \circ \pi_0(OC_n) = h_0(OC_n)$, which is zero by Proposition~\ref{prop:ocnkilled}. This concludes the proof since $j^*$ is injective.
\end{proof}

Therefore, there is an induced map $\pi : OH(\mathfrak B_{n,n}, \Z)  \rightarrow H(\widetilde T)$ given by $\pi(x_i) = X_i$. Since $\pi_0$ is surjective, so is $\pi$. 

\begin{proof}[Proof of Theorem \ref{thm:iso}]
By construction, we have $h = k \circ \pi$. Since $k$ and $\pi$ are both surjective, $h$ is surjective as well. By Proposition~\ref{prop:injective}, it is also injective and thus an isomorphism.
\end{proof}

\section{Turning $OH^n_C$ into an associative algebra}\label{sec:assoc}

In this section, we show that we can twist the multiplication of $OH^n_C$, turning it into an associative $\Z[i]$-algebra. To do so, we begin by proving that $OH^n_C$ is a quasialgebra in the sense of Albuquerque-Majid \cite{octonions99}, when graded by a groupoid as in~\cite{putyrapreprint}. Finally, we give some classification results on all those algebras.

\subsection{The Putyra-Shumakovitch associator}

The material in this subsection is due to Putyra and Shumakovitch \cite{putyrashumakovitch} \footnote{And we would like to thank Krzysztof Putyra for explaining it to us. }.

\paragraph{Grading by a groupoid} A groupoid is a small category with every morphism admitting an inverse. We say that a ring $R$ is graded by a groupoid $\mathcal G$ if 
\begin{align*}
R &= \bigoplus_{g \in \Hom(\mathcal G)} R_g& & \text{and}& 
R_{g_1} R_{g_2} &\subset R_{g_1 \circ g_2},\end{align*}
whenever $g_1$ and $g_2$ are composable, and $R_{g_1} R_{g_2} = 0$ otherwise.

\paragraph{Arc grading}
Let $\mathcal G^n$ be the groupoid with objects given by the elements of $B^n$ and with a unique morphism $a \rightarrow b$ for all $a,b \in B^n$. By uniqueness of the morphisms, the composition is such that $a \rightarrow b \rightarrow c$ is equal to $a \rightarrow c$. We can view the morphism $a \rightarrow b$ as the diagram $W(b)a$ with the composition defined for all $a,b,c \in B^n$ by $W(c)b \circ W(b)a = W(c)a$. It is clear that $\mathcal G^n$ is a groupoid as every morphism $a \rightarrow b$ possesses an inverse $b \rightarrow a$ and $a \rightarrow a$ is the identity.

\begin{example}
We  can put $\mathcal G^n$ in form of a diagram. For example, $\mathcal G^2$ can be pictured as

$$\xymatrix@H=5pc@C=0.1pc{
 \ar@{.>}@(dl,ul)^{\xy  (0,0)*{\CmdWa}; (0,0)*{\Cmda}; (10,0)*{} \endxy}  & {\Cmda} \ar@{.>}@(ur,ul)[rrrrrrrr]^{\xy  (0,2)*{\CmdWb}; (0,2)*{\Cmda} \endxy}  &&&&&&&& {\Cmdb} \ar@{.>}@(dl,dr)[llllllll]^{\xy  (0,-5)*{\CmdWa}; (0,-5)*{\Cmdb} \endxy} & \ar@{.>}@(ur,dr)^{\xy  (0,0)*{\CmdWb}; (0,0)*{\Cmdb}; (-10,0)*{} \endxy} 
}.$$
\end{example}

The decomposition 
$$OH^n = \bigoplus_{a,b \in B^n} b(OH^n)a =  \bigoplus_{W(b)a \in \Hom(\mathcal G^n)} b(OH^n)a $$
 gives a grading of $OH^n_C$ by $\mathcal G^n$. 
The integer degree, coming from the grading of $\Ext^* V(S)$, will be called the \emph{quantum degree} and written $|\_|_q$. 
The degree coming from the grading of $\mathcal G^n$ will be called the \emph{arc degree} and denoted $|\_|_B$.
We get a bigrading, written $|\_|$, by the groupoid $\mathcal G^n \times \mathcal Z$, with $\mathcal Z$ being the abelian group $\Z$ viewed as a category with one abstract object $\star$, morphisms given by the integers and composition obtained by taking the sum, i.e.
$$\star \xrightarrow{z_1} \star \xrightarrow{z_2} \star = \star \xrightarrow{z_1 + z_2} \star.$$
Notice that $\Hom(\mathcal G^n \times \mathcal Z) \simeq \Hom(\mathcal G^n) \times \Z$.

\medskip

As a matter of fact, $OH^n_C$ is graded by a subgroupoid with arrows given by diagrams $W(b)a$ and a quantum degree in $2\Z$ or $2\Z+1$ depending on whether $|W(b)a| \equiv n \mod 2$ or~not. 

\begin{definition}\label{def:subgroupoid}
We denote by $\mathcal G^n \times \mathcal Z_2$ the groupoid given by the same objects as $\mathcal G^n \times \mathcal Z$ but with hom-spaces defined as 
\[ 
\hom((a, \star), (b, \star)) = \left\{ (W(b)a, n) \left| \begin{array}{ll} n \in 2\Z &\text{ if } |W(b)a| \equiv n \mod 2, \\ n \in 2\Z + 1 &\text{ otherwise.}\end{array}\right. \right\}.
\]
\end{definition}

\paragraph{Quasialgebras} Recall that we proved in Proposition~\ref{prop:nonassoc} that $OH^n_C$ is not an associative algebra. We claim that it is almost one: it is a quasialgebra in the sense of \cite{octonions99} but graded by a groupoid  as in \cite{putyrapreprint}.
Before defining quasialgebras, recall that the \emph{nerve} of a category $\mathcal  C$ is the simplicial set generated by its morphisms. We denote it $N(\mathcal C)$ and we denote by $N_n(\mathcal C)$ the set of compositions of $n$ morphisms in~$\mathcal C$.

\begin{definition}
A \emph{quasialgebra} $A$ is a (nonassociative) $R$-algebra graded by a groupoid $\mathcal G$ with a 3-cocycle
$$\phi : N_3(\mathcal G)  \rightarrow R^*$$
where $R^* \subset R$ are the invertible elements, such that
\begin{equation}\label{eq:assoc}(xy)z = \phi(|x|,|y|,|z|)x(yz)\end{equation}
for all (homogeneous with compatible degrees) $x,y,z \in A$. We call $\phi$ the \emph{associator} of $A$.
\end{definition}

\begin{rem}\label{rem:assoc}
The condition $\phi$ being a 3-cocycle means
$$ \phi(h,k,l) \phi(g,hk,l) \phi(g,h,k) = \phi(gh,k,l)\phi(g,h,kl)$$
for all sequences
$$ e \xleftarrow{g} d \xleftarrow{h} c \xleftarrow{k} b \xleftarrow{l} a \in \mathcal G.$$
We also require $\phi(h,\id_b, l) = 1$ whenever $\id_b$ is an identity morphism.
Hence the following diagram commutes:
\begin{align} \label{diag:pentagon}
\xymatrix{
((A \otimes A ) \otimes A ) \otimes A  \ar[rr]^{\phi \otimes \id} \ar[d]_{\phi} && (A\otimes (A \otimes A))\otimes A \ar[d]^{\phi} \\
(A\otimes A) \otimes (A\otimes A) \ar[dr]_{\phi} && A\otimes((A\otimes A)\otimes A) \ar[dl]^{\id\otimes\phi} \\
&A\otimes(A\otimes(A\otimes A)). &
}
\end{align}
Notice if we have a non-associative $R$-algebra $A$ graded by a groupoid $\mathcal G$ and if a $\phi : N_3(\mathcal G)  \rightarrow R^*$ respecting~(\ref{eq:assoc}) exists, then its definition on $\deg A := \{ \deg(x) \in \Hom(\mathcal G) | x \in A\}$ is forced by~(\ref{eq:assoc}). Moreover, it will be a  3-cocycle on this subgroupoid since (\ref{diag:pentagon}) must commute for the multiplication in $A$ to be well-defined.
\end{rem}

We can view $\Z^* = \{\pm 1\}$ as the group $\Z/2\Z$ by the isomorphism $x \in \Z/2\Z \mapsto (-1)^x$. Because of that, we will write the associator of $OH^n_C$ as a map with codomain $\Z/2\Z$.

\begin{lem}\label{lem:chchassoc}
There exists a unique map
$$\varphi_C^{ch} : N_3(\mathcal G^n) \rightarrow \Z/2\Z$$
such that for all $a,b,c,d \in B^n$ we have
$$OF(C_{dba} \circ C_{dcb}\Id_{W(b)a}) = (-1)^{\varphi_C^{ch}(W(d)c, W(c)b, W(b)a)} OF(C_{dca} \circ \Id_{W(d)c}C_{cba}).$$
\end{lem}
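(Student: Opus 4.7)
The plan is to identify both sides of the equation as applications of $OF$ to two chronological cobordisms with the same underlying topological surface, and then invoke the ``projectivity'' property of the odd TQFT to extract a well-defined sign.

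First, I would observe that the two cobordisms $C_{dba} \circ (C_{dcb}\Id_{W(b)a})$ and $C_{dca} \circ (\Id_{W(d)c}C_{cba})$ are both chronological cobordisms from $W(d)cW(c)bW(b)a$ to $W(d)a$ whose underlying topological surfaces coincide up to isotopy in $\R^2 \times [0,1]$. Indeed, as already remarked for $S_{cba}$ in Section~\ref{sec:khH}, the arc-contraction surface from $W(d)cW(c)bW(b)a$ to $W(d)a$ is obtained by contracting all arcs of $b$ (against their mirrors in $W(b)$) and all arcs of $c$ (against their mirrors in $W(c)$); the resulting embedded surface is unique up to isotopy independently of the order in which the saddles are realized. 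Hence the two composites realize the same embedded surface equipped with two different chronologies and possibly different orientations of the splits.

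Second, I would invoke the main property of the chronological TQFT $OF$ established in \cite{putyra14} (and essentially already in \cite{ors}): any two chronologies on a fixed topological cobordism, together with any choice of orientations of the critical points, yield morphisms that agree up to a sign. Swapping the order of two adjacent critical points introduces a sign depending on the parities of the involved handles, reversing the orientation of a split changes the sign, and reversing the orientation of a merge has no effect by Remark~\ref{rem:mergeorient}. Composing these elementary sign changes produces a global sign $\varepsilon \in \{\pm 1\}$ relating $OF(C_{dba} \circ C_{dcb}\Id_{W(b)a})$ and $OF(C_{dca} \circ \Id_{W(d)c}C_{cba})$, and one sets $\varphi_C^{ch}(W(d)c, W(c)b, W(b)a)$ to be the unique class in $\Z/2\Z$ with $(-1)^{\varphi_C^{ch}} = \varepsilon$.

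For uniqueness, I would note that both morphisms are nonzero: applied to, say, the element ${_d1_c} \wedge {_c1_b} \wedge {_b1_a}$ in the domain, each composite produces the same nonzero generator of $OF(W(d)a)$ up to the ambiguous sign, so the sign relating them is unambiguously determined. Finally, since the source and target of the composition $W(d)c \circ W(c)b \circ W(b)a$ recover $a, b, c, d$, the assignment depends only on the 3-simplex in $N_3(\mathcal{G}^n)$. The main obstacle I expect is bookkeeping: carefully checking that every sequence of elementary moves (reorderings of critical points and reversals of orientations) connecting the two chronologies produces a sign that truly depends only on the data of the triple of morphisms in $N_3(\mathcal{G}^n)$, and not on the particular path of elementary moves chosen. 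This coherence is precisely what Putyra's framework guarantees, so the argument reduces to a direct appeal to his results on chronological TQFTs.
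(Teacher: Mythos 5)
Your approach matches the paper's: both identify the two composites as chronological cobordisms on the same underlying embedded surface, and both reduce the existence of the sign $\varphi_C^{ch}$ to the fact that different chronologies and orientations on a fixed surface are related by a coherent system of elementary sign changes, guaranteed by Putyra's framework. The paper's own proof is essentially your first and second steps.

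Your uniqueness argument, however, has a genuine gap. You claim both composites applied to ${_d1_c}\wedge{_c1_b}\wedge{_b1_a}$ produce a \emph{nonzero} generator, so the relative sign is pinned down. This is false in general, because the composite cobordism can have positive genus, and a handle is killed by $OF$ (a split sends $1 \mapsto b_1-b_2$, and a subsequent merge identifying $b_1$ with $b_2$ sends $b_1-b_2 \mapsto 0$). Concretely, take $n=2$ with $a,b \in B^2$ as in the proof of Proposition~\ref{prop:nonassoc}, and consider the quadruple $(d,c,b,a) = (a,b,a,b)$: the composite $C_{aab}\circ C_{aba}\Id_{W(a)b}$ runs from $W(a)bW(b)aW(a)b$ (three circles) to $W(a)b$ (one circle) with four saddles, so the connected surface has $\chi=-4$ with four boundary circles, hence genus $1$, and $OF$ of it is zero. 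When both sides of the defining equation vanish, the equation alone does not determine $\varphi_C^{ch}$. The correct source of uniqueness is the one you relegate to ``bookkeeping'' at the end: the sign attached to a sequence of chronology changes and orientation reversals is well-defined independently of the chosen sequence, by the coherence built into Putyra's category of chronological cobordisms. That coherence, not nonvanishing of the morphisms, is what makes $\varphi_C^{ch}$ canonical; it should be the primary argument rather than a fallback.
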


\begin{proof}
The two cobordisms $C_{dba} \circ C_{dcb}\Id_{W(b)a}$ and $C_{dca} \circ \Id_{W(d)c}C_{cba}$ have the same Euler characteristic and the same source and target, and thus are homeomorphic. This means they are related by changes of chronology and orientations, which induce only potential changes of sign.
\end{proof}

\begin{lem}\label{lem:nbrsplits}
For all choices $C\in \mathcal C^n$, the cobordism $C_{cba}$ is composed by the same number of splits.
\end{lem}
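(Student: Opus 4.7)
The plan is to show that the number of splits in $C_{cba}$ is determined by purely combinatorial data that does not depend on the choice $C$, namely on the numbers of circle components in $W(c)b$, $W(b)a$, and $W(c)a$.

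First I would recall the construction of $C_{cba}$: the cobordism is built entirely out of saddles, one per arc of $b$, so it contains exactly $n$ critical points and no births or deaths. Any such cobordism between disjoint unions of circles decomposes (up to chronology) into $m$ merges and $s$ splits with $m+s=n$. Since a merge reduces the number of circle components by $1$ and a split increases it by $1$, tracking component numbers from source to target gives
\begin{equation*}
|W(c)a| \;=\; |W(c)bW(b)a| - m + s \;=\; |W(c)b| + |W(b)a| - m + s.
\end{equation*}

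Solving the two linear equations for $s$ yields
\begin{equation*}
s \;=\; \tfrac{1}{2}\bigl(n + |W(c)a| - |W(c)b| - |W(b)a|\bigr),
\end{equation*}
and the right-hand side depends only on $a,b,c$, not on the chronology or the orientations of the saddles chosen in $C$. The main (and only real) point to verify is that indeed no births or deaths can appear: this is immediate from the construction in Section~\ref{sect:hnodd}, since the cobordism is obtained by contracting pairs of symmetric arcs, and each such contraction contributes exactly one saddle. With this observation, the formula above proves that $s$ is an invariant of the triple $(a,b,c)$, so all $C_{cba}$ share the same number of splits as claimed.
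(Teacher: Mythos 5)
Your proof is correct and takes essentially the same approach as the paper: the paper's one-line argument (``all $C_{cba}$ have the same Euler characteristic'') is exactly the observation that the total number of saddles is $n$, after which the fixed source and target force the split count to be $s = \tfrac{1}{2}\bigl(n + |W(c)a| - |W(c)b| - |W(b)a|\bigr)$ as you derive. You have just spelled out the component-counting step that the paper leaves implicit.
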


\begin{proof}
This is immediate as all $C_{cba}$ have the same Euler characteristic.
\end{proof}

We define 
\begin{align*}
\varphi^{com} : N_3(\mathcal G^n \times \mathcal Z) \subset (\Hom(\mathcal G^n) \times \Z)^3 &\rightarrow \Z/4\Z, \\
\left( (W(d)c, k), (W(c)b, l), (W(b)a, m) \right) &\mapsto
\left(k- n +|W(d)c|\right) s(W(c)b, W(b)a) \mod 4, 
\end{align*}
where $s(W(c)b, W(b)a)$ is the number of splits coming from the cobordism $C_{cba}$, which does not depend on $C$ by Lemma~\ref{lem:nbrsplits}. If we take $x,y,z \in OH^n_C$, then
\begin{equation}\label{eq:phicom}
\varphi^{com}(|x|,|y|,|z|) = 2p(x)s(|y|_B, |z|_B).
\end{equation}
In this spirit, we define the parity $p((W(d)c, k)) := \frac{k-n+|W(d)c|}{2} \in \{0,1/2,1,3/2\}$ for any element in $\Hom(\mathcal G^n \times \mathcal Z)$. Notice that the parity gives an integer for every element in the subgroupoid $\mathcal G^n \times \mathcal Z_2$ from Definition~\ref{def:subgroupoid}.

\begin{lem}\label{lem:assoc}
The map
$$\varphi_C := \varphi_C^{ch} + \varphi^{com}/2 :  N_3(\mathcal G^n \times \mathcal Z_2) \rightarrow \Z/2\Z$$
is such that
$$(xy)z = (-1)^{\varphi_C(|x|,|y|,|z|)} x(yz)$$
for all $x,y,z \in OH^n_C$.
\end{lem}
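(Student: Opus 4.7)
The plan is to unfold both $(xy)z$ and $x(yz)$ directly in terms of $OF$ applied to composite chronological cobordisms built from the contraction cobordisms $C_{\bullet\bullet\bullet}$, and then apply Lemma~\ref{lem:chchassoc} to exchange one composite for the other, tracking the additional sign produced by the super-tensor behavior of $OF$ on disjoint unions.

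First I would reduce to the case of homogeneous $x \in d(OH^n)c$, $y \in c(OH^n)b$, $z \in b(OH^n)a$ (otherwise both sides vanish) and write
\begin{align*}
(xy)z &= OF(C_{dba})\bigl(OF(C_{dcb})(x \wedge y) \wedge z \bigr), \\
x(yz) &= OF(C_{dca})\bigl(x \wedge OF(C_{cba})(y \wedge z) \bigr).
\end{align*}
Next, under the wedge-product identification $\Ext^* V(S_1\sqcup S_2) \simeq \Ext^* V(S_1)\otimes \Ext^* V(S_2)$, I would invoke the super-tensor rule
$$ OF(C_1 \sqcup C_2)(u \wedge v) = (-1)^{p(OF(C_2))\, p(u)}\, OF(C_1)(u) \wedge OF(C_2)(v), $$
where $p(OF(C))$ is the parity of $OF(C)$ as a graded map and equals the number of splits in $C$ modulo $2$: merges act by quotient projections on the exterior algebra and preserve exterior degree, whereas each split wedges a new factor $(b_1-b_2)$ on the left and raises exterior degree by $1$. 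Taking $(C_1, C_2) = (C_{dcb}, \Id_{W(b)a})$ (where $p(OF(C_2)) = 0$) produces no extra sign and gives $(xy)z = OF\bigl(C_{dba}\circ(C_{dcb}\sqcup\Id_{W(b)a})\bigr)(x\wedge y\wedge z)$, while $(C_1, C_2) = (\Id_{W(d)c}, C_{cba})$ yields
$$ x(yz) = (-1)^{p(x)\, s(|y|_B,|z|_B)}\, OF\bigl(C_{dca}\circ(\Id_{W(d)c}\sqcup C_{cba})\bigr)(x\wedge y\wedge z). $$

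Combining the two expressions with Lemma~\ref{lem:chchassoc}, which relates the two composite cobordisms appearing by the sign $(-1)^{\varphi_C^{ch}}$, the total discrepancy between $(xy)z$ and $x(yz)$ becomes $(-1)^{\varphi_C^{ch} + p(x)\,s(|y|_B,|z|_B)}$; formula~\eqref{eq:phicom} identifies the second summand with $\varphi^{com}/2$, producing $(-1)^{\varphi_C}$ as required. The main obstacle is the super-tensor identity for $\Id_{W(d)c}\sqcup C_{cba}$: one must verify saddle-by-saddle along the chronology that each split in $C_{cba}$ inserts its new factor $(b_1 - b_2)$ to the left of $x$ in the combined exterior algebra, and that reconciling this with the natural form of $x\wedge OF(C_{cba})(y\wedge z)$ — where $(b_1-b_2)$ appears only to the right of $x$ — costs exactly one sign $(-1)^{p(x)}$ per split, while merges (being parity-preserving) contribute none.
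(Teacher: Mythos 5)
Your proof is correct and follows essentially the same route as the paper: both arguments split the associativity defect into (i) the chronology-change sign supplied by Lemma~\ref{lem:chchassoc} and (ii) the Koszul sign from moving $x$ past the split factors of $C_{cba}$, then identify the latter with $\varphi^{com}(|x|,|y|,|z|)/2$ via~(\ref{eq:phicom}). The paper tracks the split factors $S(d,c,b)$, $S(c,b,a)$, etc.\ explicitly and commutes them by hand, whereas you package the same commutation as a super-tensor identity for $OF$ on disjoint unions; this is a cosmetic difference, and the ``main obstacle'' you flag (that each split wedges its factor on the left of the full exterior algebra, so it must cross $x$) is exactly the point the paper's explicit computation encodes.
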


\begin{proof}
Suppose we have $x \in d(OH^n_C)c$, $y \in c(OH^n_C)b$ and $z \in b(OH^n_C)a$. We compute
\begin{align*}
xy &= S(d,c,b) \wedge x  \wedge y, \\
(xy)z &= S(d,b,a) \wedge S(d,c,b) \wedge x \wedge y \wedge z, \\
yz &= S(c,b,a) \wedge y \wedge z, \\
x(yz) &= S(d,c,a) \wedge x \wedge S(c,b,a) \wedge y \wedge z,
\end{align*}
where $S(d,c,b)$ are the terms coming from the splits of the cobordism $C_{dcb}$. Notice that we abuse the notation by identifying $x,y$ with their images in $d(OH^n_C)b$ in the first line, and so on. \\
This computation means that the non-associativity comes from two phenomena:
\begin{itemize}
\item The commutation between the elements coming from the splits of the product $yz$ and the left term $x$, that is
$$S(d,c,a) \wedge x \wedge S(c,b,a) \wedge y \wedge z = (-1)^{p(x)p(S(c,b,a))} S(d,c,a) \wedge S(c,b,a) \wedge x \wedge y \wedge z.$$
By (\ref{eq:phicom}), we have $p(x)p(S(c,b,a)) = p(x)s(W(c)b,W(b)a) = \varphi^{com}(|x|,|y|,|z|)/2$.
\item The change of chronology and orientations between the cobordisms $C_{dba} \circ C_{dcb}\Id_{W(b)a}$ and $C_{dca} \circ \Id_{W(d)c}C_{cba}$, meaning that
$$S(d,b,a) \wedge S(d,c,b) = (-1)^{\varphi_C^{ch}(|x|_B,|y|_B, |z|_B)} S(d,c,a) \wedge S(c,b,a)$$
by Lemma~\ref{lem:chchassoc}.
\end{itemize}
To conclude, we have
$$(xy)z = (-1)^{\varphi_C^{ch}(|x|_B,|y|_B, |z|_B)+\varphi^{com}(|x|,|y|,|z|)/2} x(yz)$$
and this finishes the proof.
\end{proof}
%

\begin{lem}\label{lem:cocycle}
The map $\varphi_C$ is a $3$-cocycle. More generally, the map
$$\psi_C := 2 \varphi_C^{ch} + \varphi^{com} :  N_3(\mathcal G^n \times \mathcal Z) \rightarrow \Z/4\Z,$$
is a $3$-cocycle.
\end{lem}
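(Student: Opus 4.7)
The strategy is to prove the claim for $\psi_C$ first, and then deduce the one for $\varphi_C$ by restricting to $\mathcal G^n\times\mathcal Z_2$ (where $\psi_C = 2\varphi_C$ as $\Z/4\Z$-valued maps). The key observation driving the proof is that the $4$-cochain coboundary $d\psi_C(\alpha,\beta,\gamma,\delta)$ depends only on the arc-degree parts of its arguments; hence the cocycle identity only needs to be verified at one convenient quantum-degree configuration.

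First I would establish this quantum-degree independence by direct calculation. Since $\varphi_C^{ch}$ involves no quantum degree, only $\varphi^{com}(\alpha,\beta,\gamma)=(k_\alpha-n+|\alpha_B|)s(\beta_B,\gamma_B)$ contributes. The coefficient of $k_\beta$ in $d\varphi^{com}$ is immediately $s(\gamma_B,\delta_B)-s(\gamma_B,\delta_B)=0$, while the coefficient of $k_\alpha$ is
\[
s(\beta_B,\gamma_B)+s(\beta_B\gamma_B,\delta_B)-s(\gamma_B,\delta_B)-s(\beta_B,\gamma_B\delta_B),
\]
which vanishes because both compound cobordisms $C_{d,b,a}\circ(C_{d,c,b}\sqcup\Id_{W(b)a})$ and $C_{d,c,a}\circ(\Id_{W(d)c}\sqcup C_{c,b,a})$ share the same source $W(d)c\sqcup W(c)b\sqcup W(b)a$, same target $W(d)a$, and the same $2n$ saddles, so by the Euler-characteristic count behind Lemma~\ref{lem:nbrsplits} they have the same number of splits.

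Next I would fix one such configuration and invoke coherence. By Lemma~\ref{lem:assoc} combined with Remark~\ref{rem:assoc}, $\varphi_C$ is the associator of the $4$-fold product in $OH^n_C$ and therefore satisfies the pentagon identity on $\deg OH^n_C$. For any chain $a,b,c,d,e\in B^n$, I would apply this pentagon to test elements $w,x,y,z$ of arc-degrees $W(e)d,W(d)c,W(c)b,W(b)a$ chosen so that $w(x(yz))\neq 0$; this forces $d\varphi_C\equiv 0\pmod 2$, whence $d\psi_C\equiv 0\pmod 4$, at those arc-degrees and the corresponding quantum degrees. Combined with the quantum-degree independence of $d\psi_C$, this yields $d\psi_C\equiv 0\pmod 4$ on all of $N_3(\mathcal G^n\times\mathcal Z)$; restricting and dividing by $2$ on $\mathcal G^n\times\mathcal Z_2$ gives $d\varphi_C\equiv 0\pmod 2$.

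The main obstacle will be finding test elements $w,x,y,z$ of prescribed arc-degrees whose iterated product does not collapse under the merges of the intermediate contraction cobordisms. Starting from the identity elements ${_b 1_a}$ and tracking how the split-difference factors $b_1-b_2$ survive subsequent merges, one can construct such elements by augmenting with suitable wedge factors; alternatively, a direct inductive argument on the bracketings of the $4$-fold product should exhibit nonzero representatives.
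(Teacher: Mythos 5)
The first half of your proposal—showing that the coefficients of $k_\alpha$ and $k_\beta$ in $d\psi_C$ vanish so that $d\psi_C$ is constant in the quantum degrees—is correct and matches the spirit of the paper's quantitative work with $\varphi^{com}$. The key identity $s_{dcb}+s_{dba}=s_{cba}+s_{dca}$ is exactly what the paper records and uses, and your Euler-characteristic justification for it is sound.

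The gap is in the second half, where you want to fix the constant by applying the pentagon identity to nonzero test elements. This does not go through because for some arc-degree configurations \emph{no} nonzero $4$-fold product exists. Take $n=2$ and the alternating configuration with arc degrees $W(b)a, W(a)b, W(b)a, W(a)b$, target $b(OH^2)b$. Each factor $b(OH^2)a$ (resp.\ $a(OH^2)b$) sits in quantum degrees $\{1,3\}$, so every pure tensor has total degree at least $4$, which is already the top degree of $b(OH^2)b\simeq\Ext^*\{b_1,b_2\}\{2\}$; the only source elements that could survive are the tensor of units. But $({_b1_a}\cdot{_a1_b})\cdot({_b1_a}\cdot{_a1_b})=\pm(b_1-b_2)\wedge(b_1-b_2)=0$, and a direct computation shows that all other bracketings of the product of the four units are likewise zero (the composite cobordism has a split whose output circles are later re-merged, killing the split factor). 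Hence for this configuration the pentagon diagram~(\ref{diag:pentagon}) is satisfied trivially and gives no constraint on the associator; in particular you cannot deduce $d\varphi_C\equiv 0$ there from nonvanishing of products, and "augmenting with suitable wedge factors" cannot help since the degree count already rules out any nonzero value.

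The paper circumvents this by \emph{not} deriving the cocycle identity from nonvanishing of iterated products. Instead, equation~(\ref{eq:cocyclech}) is a coherence identity for the sign $\varphi_C^{ch}$ between different composite chronological cobordisms: these signs are defined at the level of the cobordism category (they record changes of chronology/orientation) and are well defined even when $OF$ of the composite acts as the zero map on all elements of the source. Once~(\ref{eq:cocyclech}) is in hand, the paper finishes with the same kind of bookkeeping you started, using $p(gh)=p(g)+p(h)+s_{edc}$ together with $s_{dba}+s_{dcb}=s_{cba}+s_{dca}$ to get $d\psi_C=0$ in $\Z/4\Z$. To repair your argument you would need to replace the "nonzero test elements" step with this cobordism-level derivation of the constant term; the quantum-degree-independence observation remains a valid, if slightly redundant, simplification.
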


\begin{proof}
We mainly use Remark \ref{rem:assoc}. Take $a,b,c,d,e \in B^n$. Substituting ${_e1_d}, {_d1_c}, {_c1_b}$ and ${_b1_a}$ in~(\ref{diag:pentagon}), we get
\begin{equation}\label{eq:cocyclech}
ch_{edcb} + ch_{edba} + ch_{dcba} = s_{edc}s_{cba} + ch_{ecba} + ch_{edca}
\end{equation}
where $s_{cba} = s(W(c)b, W(b)a)$, $ch_{dcba} = \phi_C^{ch}(W(d)c,W(c)b,W(b)a)$, and so on. Now suppose we have a sequence $(e,\star) \xleftarrow{g} (d,\star) \xleftarrow{h} (c, \star) \xleftarrow{k} (b,\star) \xleftarrow{l} (a,\star)  \in \mathcal G^n \times \mathcal Z$. We compute
\begin{align*}
\psi_C(h,k,l) + \psi_C(g,hk,l) + \psi_C(g,h,k) &=\\ 
2(p(h)s_{cba} &+ ch_{dcba} + p(g)s_{dba} + ch_{edba} + p(g)s_{dcb} + ch_{edcb}), 
\intertext{and}
\psi_C(gh,k,l) + \psi_C(g,h,kl) &= 2\left(p(gh)s_{cba} + ch_{ecba} + p(g)s_{dca} + ch_{edca}\right).
\end{align*}
It is easy to see that $p(gh) = p(g) + p(h) + s_{edc}$ and  
$$s_{dba}+s_{dcb} = s_{cba}+s_{dca} =  \#\text{splits in the cobordism } W(d)cW(c)bW(b)a \rightarrow W(d)a,$$ 
such that by (\ref{eq:cocyclech}) we get 
$$\psi_C(h,k,l) + \psi_C(g,hk,l) + \psi_C(g,h,k) = \psi_C(gh,k,l) + \psi_C(g,h,kl),$$
which  concludes the proof for $\psi_C$. We get the claim for $\varphi_C$ by seeing that $\varphi_C = \psi_C |_{\mathcal G \times \mathcal Z_2}/2$.
\end{proof}

\begin{thm}
The nonassociative ring $OH^n_C$ is a quasialgebra with associator $\varphi_C$.
\end{thm}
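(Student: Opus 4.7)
The plan is to assemble the preceding two lemmas against the definition of a quasialgebra. The only new content beyond them is identifying the correct grading groupoid, choosing the ground-ring identification, and verifying the normalization condition $\varphi_C(h, \id_b, l) = 0$ required by Remark~\ref{rem:assoc}.

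First, I will verify that $OH^n_C$ is graded by $\mathcal{G}^n \times \mathcal{Z}_2$. Each summand $b(OH^n_C)a = OF(W(b)a)\{n\}$ sits over the morphism $W(b)a : a \to b$ in $\mathcal{G}^n$, and its nonzero homogeneous elements have quantum degrees lying in $n - |W(b)a| + 2\mathbb{Z}$, because $\Ext^{*} V(W(b)a)$ is concentrated in even internal degrees while the shifts $\{-|W(b)a|\}$ (from the definition of $OF$) and $\{n\}$ translate the whole tower; this is precisely the parity constraint in Definition~\ref{def:subgroupoid}. The multiplication respects the composition $W(b)c \circ W(c)a = W(b)a$ on arc degrees and is degree-preserving on quantum degrees (by minimality of the Euler characteristic of $C_{cba}$). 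Identifying $\mathbb{Z}^{*} = \{\pm 1\}$ with $\mathbb{Z}/2\mathbb{Z}$ via $k \mapsto (-1)^k$, we may read $\varphi_C$ as a map valued in $\mathbb{Z}^{*}$. With the grading in place, the associator equation $(xy)z = \varphi_C(|x|,|y|,|z|)\,x(yz)$ is exactly Lemma~\ref{lem:assoc}, and the 3-cocycle condition is Lemma~\ref{lem:cocycle}.

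The remaining task is the normalization. When the middle argument has arc degree $W(b)b = \id_b$ and quantum degree $0$, both summands of $\varphi_C = \varphi_C^{ch} + \varphi^{com}/2$ vanish. For the commutation term, $\varphi^{com}/2 = p(x)\,s(W(b)b, W(b)a)$ is zero because the cobordism $C_{bba}$ must reduce the total circle count from $n + |W(b)a|$ (the $n$ closed circles of $W(b)b$ together with the $|W(b)a|$ circles of $W(b)a$) down to $|W(b)a|$ using only its $n$ saddles; hence all $n$ saddles are merges and $s(W(b)b, W(b)a) = 0$. For the chronological term $\varphi_C^{ch}(W(d)b, W(b)b, W(b)a)$, the two cobordisms compared in Lemma~\ref{lem:chchassoc} each consist entirely of merges on three initially disjoint stacks of circles, and differ only in the time-order of two spatially disjoint groups of $n$ merges; swapping the order of disjoint merges in a chronological cobordism does not affect the induced map under $OF$, since merges carry no orientation data (Remark~\ref{rem:mergeorient}) and the underlying functorial expressions involve only commuting projection maps on the relevant exterior algebras.

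I expect the main obstacle to be this last vanishing of $\varphi_C^{ch}$ in the degenerate case, as it requires a careful check that reordering disjoint merges leaves the $OF$-image invariant. Everything else is packaging: once normalization is in hand, the theorem follows immediately by combining Lemmas~\ref{lem:assoc} and~\ref{lem:cocycle} with the groupoid grading above.
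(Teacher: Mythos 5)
Your overall structure matches the paper's proof exactly: the theorem is stated in the paper as an immediate consequence of Lemmas~\ref{lem:assoc} and~\ref{lem:cocycle}, and those are the two ingredients you assemble. Your decision to also verify the grading and the normalization condition $\varphi_C(h,\id_b,l)=0$ from Remark~\ref{rem:assoc} is a reasonable filling-in of detail, since neither lemma addresses normalization explicitly.

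However, your cobordism argument for the normalization contains a false claim. For middle argument $\id_b$, the two cobordisms compared in Lemma~\ref{lem:chchassoc} are $C_{dba}\circ C_{dbb}\Id_{W(b)a}$ and $C_{dba}\circ \Id_{W(d)b}C_{bba}$. While $C_{dbb}$ and $C_{bba}$ are indeed pure merge cobordisms (each kills the $n$ circles of $W(b)b$), the common factor $C_{dba}$ contains $\tfrac{1}{2}\bigl(|W(d)a|-|W(b)a|\bigr)$ splits, which is positive for general $d,a$. So the two composites do \emph{not} ``consist entirely of merges,'' and the chronology difference between them is not merely a reordering of disjoint merges among themselves; it also requires sliding merges past the splits of $C_{dba}$. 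Your justification (merges carry no orientation data, projections commute) covers merge--merge exchanges but says nothing about merge--split exchanges, so as written the argument has a gap.

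The route the paper has in mind, via Remark~\ref{rem:assoc}, is shorter: once Lemma~\ref{lem:assoc} is established, normalization is forced by unitality. Taking $x={_d1_b}$, $y=1_b$, $z={_b1_a}$, one has $xy=x$ and $yz=z$, so the associator equation gives $xz=(-1)^{\varphi_C(|x|,|1_b|,|z|)}xz$; since $xz=OF(C_{dba})(1)$ is a nonzero product of split factors, $\varphi_C^{ch}(W(d)b,W(b)b,W(b)a)=0$. Combined with your (correct and useful) observation that $s(W(b)b,W(b)a)=0$, so that $\varphi^{com}/2$ vanishes for \emph{every} first argument and not just those of even parity, this yields $\varphi_C(g,\id_{(b,\star)},l)=0$ for all $g,l$, closing the gap.
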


\begin{proof}
This is an immediate consequence from Lemmas~\ref{lem:assoc}~and~\ref{lem:cocycle}.
\end{proof}

We call $\varphi_C$ the Putyra-Shumakovitch associator.

\subsection{Twisting $OH^n_C$}

\paragraph{Twisted multiplication} 
The idea of twisting a $\mathcal G$-graded $R$-algebra $A$ by a map
$$\tau : N_2(\mathcal G) \rightarrow R^*$$
is to define a new algebra $A_\tau$ by the same elements as $A$, but with a multiplication given by
$$A_\tau \otimes_R A_\tau \rightarrow A_\tau, \quad (x,y) \mapsto x *_{\tau} y := \tau(|x|,|y|) xy$$
for all $x,y \in A_\tau$, and where $xy$ is the product in $A$.

\begin{prop}\label{prop:cobtwist}
Let $A$ be a quasialgebra graded by $\mathcal G$ with associator $\phi$. If $\phi$ is a coboundary, then there exist a twist $\tau$ such that $A_\tau$ is associative.
\end{prop}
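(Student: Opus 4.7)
The plan is the classical quasi-bialgebra calculation: twisting a multiplication by a 2-cochain $\tau$ shifts the associator by the coboundary $\delta \tau$, so if $\phi$ is itself a coboundary then one just reverses that shift. By hypothesis there is a 2-cochain $\tau \colon N_2(\mathcal G) \to R^*$ with $\phi = \delta \tau$. With the sign convention on $\delta$ dictated by the 3-cocycle formula of Remark~\ref{rem:assoc}, this unfolds to the identity
\begin{equation*}
\phi(g, h, k)\, \tau(g, h)\, \tau(gh, k) \;=\; \tau(h, k)\, \tau(g, hk)
\end{equation*}
for every composable triple $g, h, k$ in $\mathcal G$. One may further replace $\tau$ by a cohomologous normalised cochain so that $\tau(\id, g) = \tau(g, \id) = 1$; this keeps the unit of $A$ a unit of $A_\tau$.

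\textbf{Key step.} Take homogeneous $x, y, z \in A_\tau$ with composable degrees (the only nontrivial case, since $*_\tau$ vanishes on non-composable degrees just like the original product). A direct computation using the quasialgebra relation $(xy)z = \phi(|x|, |y|, |z|)\, x(yz)$ gives
\begin{align*}
(x *_\tau y) *_\tau z &= \tau(|x|, |y|)\, \tau(|xy|, |z|)\, (xy)z \\
&= \tau(|x|, |y|)\, \tau(|xy|, |z|)\, \phi(|x|, |y|, |z|)\, x(yz),
\end{align*}
while
\begin{equation*}
x *_\tau (y *_\tau z) \;=\; \tau(|y|, |z|)\, \tau(|x|, |yz|)\, x(yz).
\end{equation*}
The coboundary identity above, applied to $(g, h, k) = (|x|, |y|, |z|)$, says precisely that the two scalar prefactors agree, so $A_\tau$ is associative.

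\textbf{Main obstacle.} There is no genuine obstacle beyond bookkeeping of conventions. The only thing to pay attention to is the sign convention for $\delta$ on 2-cochains: one must fix it so that the 3-cocycle condition of Remark~\ref{rem:assoc} reads $\delta \phi = 0$ and so that the coboundary identity appears as above. Once this is pinned down, the remaining verification is the one-line calculation just given; the twist manifestly respects the $\mathcal G$-grading because $x *_\tau y$ and $xy$ differ only by an invertible scalar, and the normalisation of $\tau$ takes care of the unit. In the application that follows (Section~\ref{sec:assoc}) the 3-cocycle is $\varphi_C$ from Lemma~\ref{lem:cocycle}, and the content will be to exhibit an explicit primitive $\tau$.
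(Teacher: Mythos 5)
Your proof is correct and essentially the same as the paper's: choose a primitive $\tau$ of the associator, twist by $\tau$, expand both parenthesizations, and use the coboundary identity to match the scalar prefactors. The convention you chose, $\phi(g,h,k)\,\tau(g,h)\,\tau(gh,k)=\tau(h,k)\,\tau(g,hk)$, is exactly the one under which the cancellation is immediate; the paper's displayed 2-coboundary formula is the reciprocal of yours, which works only after the harmless replacement $\tau\mapsto\tau^{-1}$, so your version is the cleaner one.
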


\begin{proof}
By definition of coboundary, there exists a map
$$\tau : N_2(\mathcal G) \rightarrow R^*$$
such that
\begin{equation}\label{eq:2cocycle}\phi(g,h,k) = \tau(g,h)\tau(g,hk)^{-1}\tau(gh,k)\tau(h,k)^{-1}\end{equation}
for all sequence $ d \xleftarrow{g} c \xleftarrow{h} b \xleftarrow{k} a \in \mathcal G$. Let $A_\tau$ be the twisting of $A$ by this $\tau$. Then we have
\begin{align*}
(x *_\tau y)*_\tau z &= \tau(|x|,|y|)\tau(|xy|,|z|) (xy)z, \\
x*_\tau (y*_\tau z) &= \tau(|x|,|yz|)\tau(|y|,|z|) x(yz),
\end{align*}
for all $x,y,z \in A_\tau$ and thus, by (\ref{eq:assoc}) and (\ref{eq:2cocycle}), we conclude that $A_\tau$ is associative.
\end{proof}

The geometric realization of the nerve of a category $\mathcal C$, denoted $|N(\mathcal C)|$, is a topological space constructed by gluing simplexes respecting the simplicial structure of the nerve.

\begin{lem}\label{lem:geomrel}
The geometric realization of $N(\mathcal G^n)$ is a simplex of dimension $C_n-1$, for $C_n$ the $n^{th}$ Catalan number:
$$|N(\mathcal G^n)| \simeq \Delta^{(C_n-1)}.$$
\end{lem}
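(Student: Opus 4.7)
The plan is to recognize that $\mathcal{G}^n$ is the indiscrete (pair) groupoid on the set $B^n$: by construction, between any two objects there is a unique morphism and composition is forced. Consequently a $k$-simplex of $N(\mathcal{G}^n)$ is an ordered $(k+1)$-tuple $(a_0, \ldots, a_k) \in (B^n)^{k+1}$, with face and degeneracy maps given by deletion and duplication of entries, so the nerve depends only on the underlying set $B^n$.

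Next, I would invoke the classical combinatorial fact that $|B^n| = C_n$, since $B^n$ is the set of crossingless matchings on $2n$ points. To identify $|N(\mathcal{G}^n)|$ with the standard geometric simplex $\Delta^{(C_n-1)}$ on vertex set $B^n$, the natural candidate is the continuous surjection
\[
|N(\mathcal{G}^n)| \longrightarrow \Delta^{(C_n-1)}, \qquad \bigl[(a_0, \ldots, a_k);\, (t_0, \ldots, t_k)\bigr] \longmapsto \sum_{j=0}^k t_j\, a_j,
\]
and one checks it is well-defined (face/degeneracy relations translate respectively to setting a coordinate to zero and to summing two adjacent coordinates at a repeated vertex). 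Contractibility of the realization follows independently from the groupoid equivalence $\{a_0\} \hookrightarrow \mathcal{G}^n$ for any chosen basepoint $a_0$, which yields a homotopy equivalence $|N(\mathcal{G}^n)| \simeq |N(\ast)| = \text{pt}$ and thus confirms the correct homotopy type.

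The main subtlety is promoting the above map to a genuine homeomorphism with $\Delta^{(C_n-1)}$ (rather than merely a surjection onto a contractible space of the correct dimension): one must verify that the \emph{a priori} infinitely many cells of $|N(\mathcal{G}^n)|$ coming from reorderings or repetitions of vertex sequences collapse correctly onto the canonical subcomplex spanned by the non-degenerate simplices with pairwise distinct entries. I would handle this by describing the cellular decomposition of $|N(\mathcal{G}^n)|$ explicitly and verifying that the induced simplicial identifications match those defining $\Delta^{(C_n-1)}$ on the vertex set $B^n$.
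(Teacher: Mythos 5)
Your attempted map $[(a_0,\dots,a_k);(t_0,\dots,t_k)]\mapsto\sum_j t_j a_j$ is well-defined, continuous, and surjective, but the "main subtlety" you flag is not a technicality to be ironed out: it is a genuine obstruction, and the verification you propose would fail. For $n\geq 2$ the CW-complex $|N(\mathcal G^n)|$ is \emph{infinite-dimensional}, hence not homeomorphic to any finite simplex. The reason is that a $k$-simplex $(a_0,\dots,a_k)$ of the nerve of a pair groupoid is degenerate only when two \emph{consecutive} entries coincide, so any alternating tuple such as $(a,b,a)$, $(a,b,a,b)$, $\dots$ with $a\neq b$ is non-degenerate and contributes a genuine cell of the corresponding dimension to the realization. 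Concretely, the interior of $\{(a,b,a)\}\times\Delta^2$ is not identified with any lower-dimensional cell; your map collapses it onto the edge $\{(a,b)\}\times\Delta^1$, and that collapse is exactly what the simplicial identifications do \emph{not} perform. Already for $n=2$, where $|B^2|=2$, one gets the familiar model $S^\infty$ for the nerve of the pair groupoid on two objects: contractible, infinite-dimensional, and certainly not $\Delta^1$. So no explicit cellular bookkeeping will promote your surjection to a homeomorphism.

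The part of your argument that does work, and in fact is all that is needed, is the contractibility observation: since $\mathcal G^n$ has a unique morphism between any two objects, the inclusion of any one object is an equivalence of groupoids, so $|N(\mathcal G^n)|$ is contractible. The paper's one-line proof should be read in exactly this spirit: the conclusion ``$|N(\mathcal G^n)|\simeq\Delta^{(C_n-1)}$'' is used only inside Lemma~\ref{lem:coh} to compute cohomology via K\"unneth, for which the homotopy type (a point) is the only input and the dimension $C_n-1$ plays no role. If you want to state the lemma tightly, replace ``is a simplex'' by ``is homotopy equivalent to a point (equivalently, to $\Delta^{(C_n-1)}$, since $|B^n|=C_n$),'' and drop the homeomorphism ambition entirely; your groupoid-equivalence argument then proves exactly the right statement with no further work.
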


\begin{proof}
The proof is immediate from the fact that $B^n$ has cardinality $C_n$ and $\mathcal G^n$ possesses one unique morphism between each pair of objects.
\end{proof}

\begin{lem}\label{lem:coh}
The cohomology groups of $\mathcal{G}^n \times \mathcal{Z}$ are
\begin{align*}
H^0(N(\mathcal{G}^n \times \mathcal{Z}), \Z/4\Z) &\simeq \Z/4\Z, \\
H^1(N(\mathcal{G}^n \times \mathcal{Z}), \Z/4\Z) &\simeq \Z/4\Z , \\
H^{\ge 2}(N(\mathcal{G}^n \times \mathcal{Z}), \Z/4\Z) &\simeq 0.
\end{align*}
\end{lem}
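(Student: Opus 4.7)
The plan is to reduce the computation to the cohomology of a circle, using that the nerve of a product of (small) categories is homotopy equivalent to the product of their nerves.

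First, I would observe that $\mathcal{G}^n \times \mathcal{Z}$ is a product of small categories, so there is a canonical homeomorphism
\[
|N(\mathcal{G}^n \times \mathcal{Z})| \;\simeq\; |N(\mathcal{G}^n)| \times |N(\mathcal{Z})|
\]
(this is a standard fact about nerves and geometric realizations; it holds because both factors have countably many simplices in each degree, so no $k$-ification issues arise). By Lemma~\ref{lem:geomrel}, the first factor is the simplex $\Delta^{(C_n-1)}$, which is contractible. Hence projection onto the second factor gives a homotopy equivalence
\[
|N(\mathcal{G}^n \times \mathcal{Z})| \;\simeq\; |N(\mathcal{Z})|.
\]

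Next I would identify $|N(\mathcal{Z})|$ explicitly. The category $\mathcal{Z}$ has a single object $\star$ and morphism set $\Z$, with composition given by addition; in other words, $\mathcal{Z} = B\Z$ is the one-object delooping of the group $\Z$. The geometric realization of the nerve of a one-object groupoid $BG$ is a model for the classifying space of $G$, so
\[
|N(\mathcal{Z})| \;\simeq\; B\Z \;\simeq\; K(\Z, 1) \;\simeq\; S^1.
\]
Combining the two steps yields a homotopy equivalence $|N(\mathcal{G}^n \times \mathcal{Z})| \simeq S^1$.

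Finally, I would conclude by computing the cohomology of $S^1$ with coefficients in $\Z/4\Z$, which is standard: $H^0(S^1,\Z/4\Z) \simeq \Z/4\Z$ (connectedness), $H^1(S^1,\Z/4\Z) \simeq \Z/4\Z$ (from the single 1-cell), and $H^{\geq 2}(S^1,\Z/4\Z) = 0$ since $S^1$ is $1$-dimensional. This is exactly the claim of the lemma.

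The only mildly delicate point is the first step, namely checking that the identification $|N(\mathcal{C} \times \mathcal{D})| \simeq |N(\mathcal{C})| \times |N(\mathcal{D})|$ is an honest homotopy equivalence in our setting; but since $\mathcal{G}^n$ is finite (so its nerve has finitely many non-degenerate simplices in each degree) and $\mathcal{Z}$ is countable, this is the classical product-of-nerves theorem and causes no trouble. Everything else is a direct application of well-known homotopy-theoretic facts.
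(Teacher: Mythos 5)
Your proof is correct and takes essentially the same route as the paper: both identify $|N(\mathcal{G}^n \times \mathcal{Z})|$ with $\Delta^{C_n-1} \times S^1$ via Lemma~\ref{lem:geomrel} and then read off the cohomology. The only cosmetic difference is that the paper invokes the K\"unneth formula at the final step, whereas you instead use contractibility of the simplex to collapse onto $S^1$; these amount to the same computation.
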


\begin{proof}
First, by Lemma~\ref{lem:geomrel}, we get that $|N(\mathcal G^n \times \mathcal Z)| \simeq  \Delta^m \times S^1$ for $m = C_n -1$. By the K\"unneth formula, we get 
$$H^k(N(\mathcal G^n \times \mathcal Z), \Z/4\Z) \simeq \bigoplus_{i+j=k} H^i(\Delta^{m}, \Z/4\Z) \otimes_{\Z/4\Z} H^j( S^1, \Z/4\Z),$$
which proves the claim.
\end{proof}

Some technicalities still remain to be solved, before being able to apply Proposition~\ref{prop:cobtwist} to $OH^n_C$: we do not know the cohomology of $\mathcal G^n \times \mathcal Z_2$ and thus, we are not able to show that the Putyra-Shumakovitch associator is a coboundary. Therefore, we work with $\psi_C$, which has an image in $\Z/4\Z$ and gives square roots of $-1$. Hence, we must consider the extended version $OH^n_C \otimes_\Z \Z[i]$ to the Gaussian integers, with $\Z[i]^* = \{1,i,-1,-i\} \simeq \Z/4\Z$. By Lemmas~\ref{lem:assoc}~and~\ref{lem:cocycle}, $OH^n_C \otimes_\Z \Z[i]$ is a quasialgebra graded by $\mathcal G^n \times \mathcal Z$ with $\psi_C$ as associator.

\begin{thm}
For all $C \in \mathcal C^n$ there exists a map $\tau_C :N_2(\mathcal G^2 \times \mathcal Z) \rightarrow \Z/4\Z$ such that the twisted algebra $ \left(OH^n_C \otimes_\Z \Z[i] \right)_{\tau_C}$ is associative.
\end{thm}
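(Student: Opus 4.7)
The plan is to apply Proposition~\ref{prop:cobtwist} to $A = OH^n_C \otimes_\Z \Z[i]$. By Lemmas~\ref{lem:assoc} and~\ref{lem:cocycle}, this is a quasialgebra graded by the groupoid $\mathcal G^n \times \mathcal Z$ with associator $\psi_C$ taking values in $\Z/4\Z \simeq \Z[i]^*$. Producing an associative twist $\tau_C$ therefore reduces to showing that $\psi_C$ is a coboundary in the sense of the relation appearing in the proof of Proposition~\ref{prop:cobtwist}.

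For this I would simply invoke Lemma~\ref{lem:coh}: it asserts that $H^{\ge 2}(N(\mathcal G^n \times \mathcal Z), \Z/4\Z) = 0$, so in particular $H^3$ vanishes and every $3$-cocycle is a coboundary. Hence $\psi_C$ admits a primitive $\tau_C : N_2(\mathcal G^n \times \mathcal Z) \to \Z/4\Z$, and Proposition~\ref{prop:cobtwist} applied to this $\tau_C$ immediately produces the associative twisted algebra $\bigl(OH^n_C \otimes_\Z \Z[i]\bigr)_{\tau_C}$.

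The step carrying the real content is Lemma~\ref{lem:coh}, whose force comes from the K\"unneth decomposition $|N(\mathcal G^n \times \mathcal Z)| \simeq \Delta^{C_n - 1} \times S^1$ already observed in Lemma~\ref{lem:geomrel}. The main conceptual subtlety I would make explicit is the reason one must pass to $\Z[i]$: the finer Putyra--Shumakovitch associator $\varphi_C$ lives on the subgroupoid $\mathcal G^n \times \mathcal Z_2$ with $\Z/2\Z$-coefficients, where no analogous vanishing of $H^3$ is available. Trading $\varphi_C$ for $\psi_C = 2\varphi_C^{ch} + \varphi^{com}$ on the larger groupoid with $\Z/4\Z$-coefficients (i.e.\ adjoining a square root of $-1$) is precisely the maneuver that makes the associator a coboundary and the twisting applicable. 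Beyond this, only the routine translation between additive $\Z/4\Z$-cocycles and their multiplicative $\Z[i]^*$-incarnation via $k \mapsto i^k$ needs to be checked, which identifies the simplicial coboundary condition with the multiplicative one written in Proposition~\ref{prop:cobtwist}.
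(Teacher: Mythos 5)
Your proof is correct and follows the paper's argument exactly: invoke Lemmas~\ref{lem:assoc} and~\ref{lem:cocycle} to present $OH^n_C \otimes_\Z \Z[i]$ as a quasialgebra graded by $\mathcal G^n \times \mathcal Z$ with associator $\psi_C$, then use the vanishing $H^3(N(\mathcal G^n \times \mathcal Z), \Z/4\Z) = 0$ from Lemma~\ref{lem:coh} to conclude $\psi_C$ is a coboundary and apply Proposition~\ref{prop:cobtwist}. The surrounding explanation of why one passes from $\varphi_C$ on $\mathcal G^n \times \mathcal Z_2$ to $\psi_C$ over $\Z[i]$ also matches the paper's discussion preceding the theorem.
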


\begin{proof}
By Lemma \ref{lem:coh}, we have 
$$H^3(N(\mathcal{G}^n \times \mathcal{Z}), \Z/4\Z) \simeq 0,$$
 and thus every $3$-cocycle is a coboundary. In particular, the associator $\psi_C$ is a coboundary and we can apply Proposition \ref{prop:cobtwist}.
\end{proof}

\begin{rem}
Notice that the twist is not necessarily unique and thus we get potentially a family of associative algebras for each $C \in \mathcal C^n$.
\end{rem}

\begin{example}\label{ex:twistedoh2}
We construct an explicit $\left(OH^2_C\right)_{\tau_C}$ based on the choice $C$ from Remark~\ref{rem:standardchoice}. 
We twist this algebra with 
\begin{align*}
\tau_C\left((\aoat, 1+4k), (\yo,*)\right) &= i,& \tau_C\left((\yo,2+4k), (\xo,*)\right) = &i, \\
\tau_C\left((\aoat, 2+4k), (\yo,*)\right) &= -1,& \tau_C\left((\yo,3+4k), (\xo,*)\right) = &-1, \\
\tau_C\left((\aoat, 3+4k), (\yo,*)\right) &= -i,& \tau_C\left((\yo,0+4k), (\xo,*)\right) = &-i,
\end{align*}
for every $k \in \Z$ and $\tau_C = 1$ everywhere else. In short, we get the multiplication table:
\begin{center}
\begin{tabular}{|c"c|c|c|c|c|c|}
  \hline
 $\left(OH^2_C\right)_{\tau_C}$ & $\idaa$ & $\aoid$ & $\idat$ & $\aoat$ & $\idab$ & $\xo$ \\
  \thickhline
$\idaa$ & $\idaa$ & $\aoid$ & $\idat$ & $\aoat$ &  $\idab$ & $\xo$  \\
  \hline
$\aoid$ & $\aoid$ & $0$ & $\aoat$ & $0$  & $\color{red}- \xo$ & $0$  \\
  \hline
$\idat$ & $\idat$ & $-\aoat$ & $0$ & $0$ & $\color{red}- \xo$ & $0$ \\
  \hline
$\aoat$ & $\aoat$ & 0 & 0 & 0 & 0 & 0  \\
  \hline
${\idba}$ & ${\idba}$ & $\yo$ & $\yo$ & 0 & $\idbt - \boid$ & $-\bobt$  \\
  \hline
$\yo$ & $\yo$ & 0 & 0 & 0 & $\color{red} \bobt$ & 0 \\
  \hline
\end{tabular}
\end{center}
for $a$ and the one for $b$ stays the same. An exhaustive computation (which can easily be done by computer) confirms that $d\tau_C$ gives the associator in this case.
\end{example}

\begin{rem}
For this example, the twisting in $OH^n_C$ results in integer coefficients which is not surprising since the geometric realization of $\mathcal G^2 \times \mathcal Z_2$ has dimension 2 and thus there exists a twist for the associator $\varphi_C$ in this case. In general, this is not true and we lose the property that the algebra agrees modulo 2 with $H^n \otimes_\Z \Z[i]$.
\end{rem}

\subsection{Classification results}

For now, we have a family of quasialgebras $\{OH^n_C\}$ indexed by $\mathcal C^n$ and a family of associative algebras indexed by $\mathcal C^n$ and the twists. In this section, we partially classify these families.

\begin{prop}\label{prop:classodd}
Let $C, C'$ be two choices in $\mathcal C^n$ and $\varphi_C,\varphi_{C'}$ be respectively the associators of $OH^n_C$ and $OH^n_{C'}$. If $\varphi_C = \varphi_{C'}$, then the two quasialgebras are isomorphic, $OH^n_C \simeq OH^n_{C'}$.
\end{prop}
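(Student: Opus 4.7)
\textbf{Proof plan for Proposition~\ref{prop:classodd}.}

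The plan is to build an isomorphism $\Phi\colon OH^n_C\to OH^n_{C'}$ of the form $\Phi(x)=\mu(|x|_B)\,x$ for a suitable sign function $\mu\colon N_1(\mathcal G^n)\to\{\pm 1\}$, and to obtain $\mu$ as a primitive of a 2-cocycle arising from the two choices.

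First, observe that for each triple $(a,b,c)\in (B^n)^3$ the chronological cobordisms $C_{cba}$ and $C'_{cba}$ represent the same topological cobordism, so they are related by a sequence of chronology changes and orientation reversals. Under $OF$ each such move multiplies the morphism by a sign, so there exists a function $\eta\colon N_2(\mathcal G^n)\to\{\pm1\}$ with
\[
OF(C'_{cba}) \;=\; \eta(W(c)b,W(b)a)\,OF(C_{cba}),
\]
and hence $m_{C'}(x,y)=\eta(|x|_B,|y|_B)\,m_C(x,y)$ for every homogeneous $x\in c(OH^n)b$, $y\in b(OH^n)a$. Since identity morphisms are sent by $OF$ to the identity independently of $C$, we may normalise $\eta$ to be trivial whenever one of its arguments is an identity of $\mathcal G^n$.

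Next I compare the two associativity relations. Writing $\eta_{dcb}=\eta(W(d)c,W(c)b)$ etc., an easy bookkeeping with the linearity of $m_C$ gives
\[
m_{C'}(m_{C'}(x,y),z) \;=\; \eta_{dba}\,\eta_{dcb}\,m_C(m_C(x,y),z),
\]
\[
m_{C'}(x,m_{C'}(y,z)) \;=\; \eta_{dca}\,\eta_{cba}\,m_C(x,m_C(y,z)),
\]
and applying the associator relation to both sides together with the hypothesis $\varphi_C=\varphi_{C'}$ yields
\[
\eta_{dcb}\,\eta_{dba} \;=\; \eta_{dca}\,\eta_{cba}.
\]
Up to the standard sign conventions this is precisely the 2-cocycle condition $d\eta=1$ for $\eta$ viewed as a normalised 2-cochain on the nerve of $\mathcal G^n$ with coefficients in $\mathbb Z/2\Z$.

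Now I invoke Lemma~\ref{lem:geomrel}: the geometric realisation $|N(\mathcal G^n)|$ is a simplex and therefore contractible, so $H^k(N(\mathcal G^n),\mathbb Z/2\Z)=0$ for all $k\geq 1$. In particular $\eta$ is a 2-coboundary, so there exists $\mu\colon N_1(\mathcal G^n)\to\{\pm 1\}$, normalised to be $1$ on identities, with
\[
\eta(W(c)b,W(b)a) \;=\; \mu(W(c)b)\,\mu(W(b)a)\,\mu(W(c)a)^{-1}.
\]
Define $\Phi\colon OH^n_C\to OH^n_{C'}$ by $\Phi(x)=\mu(W(c)a)\,x$ for $x\in c(OH^n)a$, extended linearly. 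Then $\Phi$ is bijective, preserves the arc and quantum gradings, and sends the unit to the unit because $\mu$ is $1$ on identities. The coboundary identity above is exactly what is needed for
\[
\Phi(m_C(x,y)) \;=\; m_{C'}(\Phi(x),\Phi(y)),
\]
so $\Phi$ is an algebra isomorphism.

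The only real obstacle is the verification that $\eta$ depends only on $(W(c)b,W(b)a)$ (i.e.\ is a well-defined scalar on each graded piece) and that identity morphisms can be normalised away; both are immediate from the description of $OF$ on twists and orientation reversals. Everything else is then a direct consequence of the contractibility of $|N(\mathcal G^n)|$.
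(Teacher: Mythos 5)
Your proposal is correct and follows essentially the same argument as the paper: define $\eta$ on $N_2(\mathcal G^n)$ from the sign discrepancy between $OF(C_{cba})$ and $OF(C'_{cba})$, show it is a $2$-cocycle (the paper computes $d\eta = \varphi_{C'}^{ch}-\varphi_C^{ch}$ directly from the cobordism comparison and then applies the hypothesis, whereas you derive the same relation by comparing the two associativity equations — an equivalent bookkeeping), and use $H^2(N(\mathcal G^n),\Z/2\Z)=0$ from the contractibility of $|N(\mathcal G^n)|$ to find a primitive and build the rescaling isomorphism.
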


\begin{proof}
Seeing that $C_{cba}$ and $C'_{cba}$ are related by a change of chronology and orientations, there is a map $\eta_{C,C'} :N_2(\mathcal G^n) \rightarrow \Z/2\Z$ such that
$$OF(C_{cba}) = (-1)^{\eta_{C,C'}(W(c)b, W(b)a)} OF(C'_{cba}),$$
for all $a,b,c \in B^n$. Writing $*_C$ for the product in $OH^n_C$ and $*_{C'}$ for the one in $OH^n_{C'}$, this means that $x *_C y = (-1)^{\eta_{C,C'}(|x|_B, |y|_B)} x *_{C'}y$ for all $x,y \in OH^n$. We compute
\begin{align*}
OF(C_{dba} \circ C_{dcb}\Id_{W(b)a}) &= (-1)^{\eta_{C,C'}(W(d)b,W(b)a) + \eta_{C,C'}(W(d)c,W(c)b)} OF(C'_{dba} \circ C'_{dcb}\Id_{W(b)a}), \\
OF(C_{dca} \circ \Id_{W(d)c}C_{cba}) &=  (-1)^{\eta_{C,C'}(W(d)c,W(c)a) + \eta_{C,C'}(W(c)b,W(b)a)} OF(C'_{dca} \circ \Id_{W(d)c}C'_{cba}),
\end{align*}
such that by definition of the associators, we get
\begin{equation}\label{eq:deta}
d\eta_{C,C'} = \phi_{C'}^{ch} - \phi_C^{ch},
\end{equation}
and thus, as $\phi_C^{ch} = \phi_{C'}^{ch}$ by hypothesis, $\eta_{C,C'}$ is a $2$-cocycle.
By Lemma~\ref{lem:geomrel}, we know that 
$$H^2(N(\mathcal G^n), \Z/2\Z) \simeq 0,$$
 and consequently, $\eta_{C,C'}$ is a coboundary. Hence, $\eta_{C,C'} = d\lambda_{C,C'}$ for a $\lambda_{C,C'} : N_1(\mathcal G^n) \rightarrow \Z/2\Z$. This means that the morphism
$$x \mapsto \lambda_{C,C'}(|x|_B) x  : OH^n_C \rightarrow  OH^n_{C'}, $$
is an isomorphism of quasialgebras.
\end{proof}

For the associative twisted algebra, the case is much simpler and all algebras are isomorphic. This means that the choice of $C$ and of twist $\tau_C$ does not matter in the end.

\begin{prop}
For all choices $C, C' \in \mathcal C^n$ and all choices of twists $\tau_C, \tau_{C'}$, there is an isomorphism
$\left(OH^n_C \otimes_\Z \Z[i]\right)_{\tau_C} \simeq\left(OH^n_{C'}\otimes_\Z \Z[i]\right)_{\tau_{C'}}.$
\end{prop}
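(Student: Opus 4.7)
The strategy is to generalise the argument of Proposition~\ref{prop:classodd} from the nonassociative to the associative setting, by passing through the ``untwisted'' comparison isomorphism and then correcting by a $1$-cochain coming from $H^2 = 0$. In particular we want to build an isomorphism $\phi: x \mapsto \lambda(|x|)\,x$ for a suitable $\lambda : N_1(\mathcal G^n \times \mathcal Z) \to \Z[i]^*$, which is automatic once the discrepancy between the two twisted multiplications is a $2$-coboundary.

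First, by the same geometric argument as in Proposition~\ref{prop:classodd}, $C_{cba}$ and $C'_{cba}$ are related by a change of chronology and orientations, so there exists $\eta_{C,C'} : N_2(\mathcal G^n)\to \Z/2\Z$ with $OF(C_{cba}) = (-1)^{\eta_{C,C'}(W(c)b,W(b)a)} OF(C'_{cba})$. Pulling $\eta_{C,C'}$ back along the projection $\mathcal G^n \times \mathcal Z \to \mathcal G^n$ and viewing $2\eta_{C,C'}$ as a $\Z/4\Z$-valued $2$-cochain on $N_2(\mathcal G^n \times \mathcal Z)$, the analogue of~(\ref{eq:deta}) together with the fact that the common $\varphi^{com}$-contribution cancels gives the identity
\[
d(2\eta_{C,C'}) \;=\; \psi_{C'} - \psi_C
\qquad \text{in } \Z/4\Z.
\]
Write $\alpha_{C,C'} := i^{2\eta_{C,C'}} \in \Z[i]^*$, so that, multiplicatively, $d\alpha_{C,C'} = i^{\psi_{C'}}/i^{\psi_C}$.

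Now denote by $*_C$ and $*_{C'}$ the untwisted products, and compute in $(OH^n_C\otimes_\Z\Z[i])_{\tau_C}$ versus $(OH^n_{C'}\otimes_\Z\Z[i])_{\tau_{C'}}$:
\[
x *_{\tau_C} y \;=\; \tau_C(|x|,|y|)\,(x *_C y) \;=\; \tau_C(|x|,|y|)\,\alpha_{C,C'}(|x|_B,|y|_B)\,(x *_{C'} y),
\]
and the analogous expression for $*_{\tau_{C'}}$. Thus the two associative multiplications are related by the $2$-cochain
\[
\beta(|x|,|y|) \;:=\; \tau_C(|x|,|y|)\,\alpha_{C,C'}(|x|_B,|y|_B)\,\tau_{C'}(|x|,|y|)^{-1}.
\]
A direct cocycle computation shows $d\beta = (d\tau_C)(d\alpha_{C,C'})(d\tau_{C'})^{-1} = i^{\psi_C}\cdot i^{\psi_{C'}}/i^{\psi_C}\cdot i^{-\psi_{C'}} = 1$, so $\beta$ is a $2$-cocycle on $N_2(\mathcal G^n \times \mathcal Z)$ with values in $\Z[i]^*\simeq \Z/4\Z$.

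Finally, by Lemma~\ref{lem:coh} we have $H^2(N(\mathcal G^n \times \mathcal Z), \Z/4\Z) = 0$, so $\beta = d\lambda$ for some $\lambda : N_1(\mathcal G^n \times \mathcal Z)\to \Z[i]^*$. The map $\phi(x) := \lambda(|x|)\,x$ is then a degree-preserving bijection satisfying
\[
\phi(x *_{\tau_C} y) \;=\; \lambda(|x||y|)\,\beta(|x|,|y|)\,(x *_{\tau_{C'}} y) \;=\; \lambda(|x|)\lambda(|y|)\,(x *_{\tau_{C'}} y) \;=\; \phi(x) *_{\tau_{C'}}\phi(y),
\]
giving the desired isomorphism of associative $\Z[i]$-algebras. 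The ``same $C$, different twist'' case is subsumed: if $\tau_C,\tau'_C$ both satisfy $d\tau_C = d\tau'_C = \psi_C$, take $C = C'$, $\alpha_{C,C'} = 1$, and the same argument applies. The only subtle point, and the step where we really use the enlargement to $\Z[i]$, is that working with the $\Z/2$-valued associator $\varphi_C$ on $\mathcal G^n\times\mathcal Z_2$ would require knowing $H^2$ of that subgroupoid (which the paper does not compute), whereas on the full $\mathcal G^n \times \mathcal Z$ with $\Z/4\Z$-coefficients the K\"unneth computation of Lemma~\ref{lem:coh} gives exactly what we need.
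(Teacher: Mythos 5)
Your argument is correct and follows the paper's own proof almost exactly: the paper sets $\theta_{C,C'} := \tau_C + 2\eta_{C,C'} - \tau_{C'}$, checks $d\theta_{C,C'} = \psi_C + 2d\eta_{C,C'} - \psi_{C'} = 0$ using equation~(\ref{eq:deta}), and then concludes via $H^2(N(\mathcal G^n\times\mathcal Z),\Z/4\Z) = 0$ as in Proposition~\ref{prop:classodd}; your $\beta = i^{\theta_{C,C'}}$ is the same cocycle written multiplicatively, and you supply the same coboundary-to-isomorphism step. The closing remark about why one works over $\mathcal G^n\times\mathcal Z$ with $\Z/4\Z$-coefficients rather than on $\mathcal G^n\times\mathcal Z_2$ is a sound observation, consistent with the paper's discussion preceding the theorem on twisting.
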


\begin{proof}
For all $x, y \in OH^n$, we have
$$x *_{\tau_C} y = i ^{\tau_C(|x|,|y|)+2\eta_{C,C'}(|x|_B,|y|_B)-\tau_{C'}(|x|,|y|)} x *_{\tau_{C'}} y,$$
where $*_{\tau_C}$ is the product in $\left(OH^n_C \otimes_\Z \Z[i]\right)_{\tau_C}$, and we write
$$\theta_{C,C'} :=  \tau_C+2\eta_{C,C'}-\tau_{C'} : N_2(\mathcal G^n \times \mathcal Z) \rightarrow \Z/4\Z.$$
 We compute $d\theta_{C,C'} = \psi_C + 2d\eta_{C,C'} - \psi_{C'} \overset{(\ref{eq:deta})}{=} 0$, and by using similar arguments as in the proof of Proposition~\ref{prop:classodd}, we conclude that the two algebras are isomorphic. 
\end{proof}

\begin{cor}
The associative twisted algebra is uniquely determined up to isomorphism. We write it ${OH}^n_{\tau}$.
\end{cor}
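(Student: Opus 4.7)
The plan is to observe that this corollary is an immediate consequence of the preceding proposition. The proposition established that for any two choices $C, C' \in \mathcal{C}^n$ and any two choices of twists $\tau_C, \tau_{C'}$ making the respective twisted algebras associative, there is an isomorphism
\[
\left(OH^n_C \otimes_\Z \Z[i]\right)_{\tau_C} \simeq \left(OH^n_{C'} \otimes_\Z \Z[i]\right)_{\tau_{C'}}.
\]
Thus the isomorphism class of the associative algebra depends neither on the cobordism choice $C$ nor on the specific primitive $\tau_C$ used to kill the associator $\psi_C$.

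Concretely, I would write: \emph{The statement follows from the preceding proposition, which shows that any two associative twistings of $OH^n_C \otimes_\Z \Z[i]$ — for any $C \in \mathcal{C}^n$ and any admissible twist $\tau_C$ — are isomorphic as $\Z[i]$-algebras. Hence we may define $OH^n_\tau$ to be the common isomorphism class of these algebras, which is well-defined up to isomorphism.} The only minor point worth emphasising in the write-up is that admissible twists exist at all, which was ensured by the previous theorem using the vanishing $H^3(N(\mathcal{G}^n \times \mathcal{Z}), \Z/4\Z) \simeq 0$ from Lemma~\ref{lem:coh}; this guarantees the family indexed by $(C, \tau_C)$ is nonempty before we collapse it to a single isomorphism class.

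There is no real obstacle here: the corollary is a packaging statement. The substantive work — existence of twists and their isomorphism-invariance — was carried out in the theorem and proposition immediately preceding. The only thing to watch for is notational consistency, namely that the notation $OH^n_\tau$ suppresses both the choice of $C$ and of $\tau_C$, which is justified precisely by the proposition just proved.
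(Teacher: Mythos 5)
Your proposal is correct and matches the paper's (implicit) reasoning exactly: the corollary is a direct repackaging of the preceding proposition, and the paper gives it no separate proof. Your additional remark about nonemptiness of the family of $(C,\tau_C)$ — secured by the existence theorem via $H^3(N(\mathcal{G}^n\times\mathcal{Z}),\Z/4\Z)\simeq 0$ — is a sensible point to make explicit.
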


\begin{rem}
Finding a twist is not an easy task, which can entail some serious difficulties for the construction of~${OH}^n_{\tau}$.
\end{rem}

\begin{prop}
The following three arc algebras are not isomorphic
$$H^n \otimes \Z[i] \not\simeq {OH}^n_{\tau} \not\simeq OH^n_C \otimes \Z[i].$$
\end{prop}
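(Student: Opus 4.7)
The plan is to peel off the easy separation first and then attack the harder one with a local-invariant argument.

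First I would dispatch $OH^n_C\otimes\Z[i]$: by Proposition~\ref{prop:nonassoc} this algebra is non-associative for all $n\geq 2$, while $H^n\otimes\Z[i]$ and $OH^n_\tau$ are both associative (the former by Khovanov, the latter by construction). This immediately gives $OH^n_C\otimes\Z[i]\not\simeq H^n\otimes\Z[i]$ and $OH^n_C\otimes\Z[i]\not\simeq OH^n_\tau$, so only the comparison $H^n\otimes\Z[i]\not\simeq OH^n_\tau$ remains.

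For that remaining comparison I would use the corner subalgebras at the primitive idempotents $1_a$. The plan is to assume an isomorphism $\phi\colon H^n\otimes\Z[i]\xrightarrow{\sim} OH^n_\tau$ and tensor up with $\Q[i]$ to get a pair of finite-dimensional $\Q[i]$-algebras. In both algebras, $1=\sum_{a\in B^n}1_a$ is a decomposition into orthogonal idempotents, and each $1_a$ is primitive because the corresponding corner is local: on the even side it is $(\Q[i][X]/X^2)^{\otimes n}$, which has maximal ideal generated by the nilpotent $X_i$'s, while on the odd side it is the exterior algebra $\Ext^*\Q[i]^n$ by Proposition~\ref{prop:extalg}, whose augmentation ideal is also nilpotent.

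The key step is then Krull--Schmidt over the field $\Q[i]$: any two complete decompositions of $1$ into orthogonal primitive idempotents are conjugate by a unit, so after composing $\phi$ with an inner automorphism I may assume $\phi(1_a)=1_{\sigma(a)}$ for some bijection $\sigma\colon B^n\to B^n$. Restricting to corners yields isomorphisms
\[
\phi_a\colon 1_a(H^n\otimes\Q[i])1_a\;\xrightarrow{\sim}\;1_{\sigma(a)}(OH^n_\tau\otimes\Q[i])1_{\sigma(a)}.
\]
Here I would invoke the fact that the cobordism $C_{aaa}$ has no splits (Proposition~\ref{prop:extalg}), so both $\varphi_C^{ch}$ and $\varphi^{com}$ vanish on triples landing in the $a$-corner; consequently the twist $\tau_C$ can be, and by the classification result of the previous subsection may be assumed to be, trivial on this corner, so the right-hand side is still isomorphic to $\Ext^*\Q[i]^n$. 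For $n\geq 2$ this is non-commutative (e.g.\ $a_1\wedge a_2 = -\,a_2\wedge a_1\neq 0$), whereas the left-hand side $(\Q[i][X]/X^2)^{\otimes n}$ is commutative—a contradiction.

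The main obstacle I expect is justifying that the twist is indeed trivial on the $a$-corner for \emph{some} valid choice of $\tau_C$, so that the corner really is an exterior algebra; once that is in hand the rest is a clean Krull--Schmidt plus commutativity argument, and all other ingredients are already established in the paper.
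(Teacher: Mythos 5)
Your first separation (associativity distinguishes $OH^n_C\otimes\Z[i]$ from the other two) is exactly the paper's. For the remaining comparison, however, you take a genuinely different route. The paper works directly with the graded center: it observes that $Z(H^n)$ has nonzero graded rank in quantum degree $2$ (for $n=2$, $Z(H^2)\simeq H(\mathfrak B_{2,2},\Z)$ has graded rank $1+3q^2+2q^4$), while in $OH^n_\tau$ any two elements $x,y\in a(OH^n_\tau)a$ of quantum degree $1$ satisfy $x*_\tau y=-y*_\tau x$, so $Z(OH^n_\tau)$ has no degree-$2$ part; the centers therefore differ. Your approach instead tensors to the field $\mathbb{Q}[i]$, notes that both corners $1_a(\cdot)1_a$ are local so each $1_a$ is primitive, invokes Krull--Schmidt to align the two orthogonal idempotent decompositions of $1$ up to conjugation, and then contrasts the commutative corner $(\mathbb{Q}[i][X]/X^2)^{\otimes n}$ with a non-commutative odd corner. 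Both strategies are sound. The paper's version is shorter and uses the graded center, an explicit isomorphism invariant; yours is more structural and has the advantage of not needing the grading on the putative isomorphism, only the ring structure over the field $\mathbb{Q}[i]$.

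There is one small gap, and you correctly flag it: you want the odd corner to be literally $\Ext^*\mathbb{Q}[i]^n$, and to arrange that you propose to choose a twist $\tau_C$ that is trivial on the $a$-corner, appealing to the classification result. That justification does not quite go through as stated --- the classification proposition says that all choices of $(C,\tau_C)$ give isomorphic twisted algebras, but it does not produce a specific twist that vanishes on a prescribed corner. Fortunately you do not need it: all your argument really uses is that the odd corner is non-commutative, and this holds for \emph{every} twist. Indeed for $x,y\in a(OH^n_\tau)a$ with $|x|_q=|y|_q=1$ the full bidegrees coincide, $|x|=|y|=(W(a)a,1)$, so $\tau(|x|,|y|)=\tau(|y|,|x|)$ automatically, and hence $x*_\tau y=\tau(|x|,|y|)\,x\wedge y=-\tau(|y|,|x|)\,y\wedge x=-\,y*_\tau x$. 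For $n\geq 2$ the corner has $n\geq 2$ such degree-$1$ generators, so it is non-commutative while the even corner is commutative, and your Krull--Schmidt argument closes. With that replacement, your proof is complete.
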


\begin{proof}
The first two are associative algebras as opposed to the last one. Let us begin with the case $n=2$. We know that the center of $H^2$ has graded rank $1+3q^2+2q^4$. Howewer, by a similar argument as in Proposition~\ref{prop:inccenter}, we have 
$$Z({OH}^2_{\tau}) \subset a(OH^2)a \oplus b(OH^2)b.$$
But ${OH}^2_{\tau}$ behaves like an exterior algebra on this subset, implying that elements anticommute and thus the graded rank of the center is $0$ in degree $2$. Therefore, $H^2$ and ${OH}^2_{\tau}$ have non-isomorphic centers and thus cannot be isomorphic as algebras.

\medskip

This can be extended to all $n \ge 2$. Suppose $x,y \in a({OH}^n_{\tau})a$ with $|x|_q=|y|_q = 1$, their products are given by
\begin{align*}
(x,y) &\mapsto \tau(|x|,|y|) x \wedge y,\\
(y,x) &\mapsto \tau(|y|,|x|) y \wedge x,
\end{align*}
since $OF(C_{aaa})$ is the product in the exterior algebra $\Ext^* V(W(a)a)$. However $|x|=|y|$, implying $\tau(|x|,|y|) = \tau(|y|,|x|)$ and  thus, $xy = -yx$.
\end{proof}

\begin{prop}
The odd center of ${OH}^2_{\tau}$ is not isomorphic to $OH(\mathfrak B_{2,2}, \Z[i])$.
\end{prop}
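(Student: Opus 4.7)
The plan is to exhibit a rank mismatch. By the classification result just established, ${OH}^2_\tau$ is determined up to isomorphism independently of $C$ and of the choice of twist, so I can work with the explicit twist $\tau_C$ of Example~\ref{ex:twistedoh2}. I define $OZ({OH}^2_\tau)$ by the natural analog of the formula from Section~\ref{sect:hnodd}, replacing $\cdot$ with $*_\tau$ and using the parity $p$ inherited from the underlying bigrading (which is preserved by twisting). Since $\tau_C$ is trivial on bidegrees containing an identity morphism, the proofs of Propositions~\ref{prop:inccenter} and~\ref{prop:caractoddcenter} go through verbatim, so every $z \in OZ({OH}^2_\tau)$ decomposes as $z = z_a + z_b$ with $z_a \in a(OH^2)a$ and $z_b \in b(OH^2)b$, and membership reduces to the two balance conditions $z_b *_\tau {_b1_a} = {_b1_a} *_\tau z_a$ and $z_a *_\tau {_a1_b} = {_a1_b} *_\tau z_b$.

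The decisive case is bidegree $2$. Writing $z_a = \alpha_1 \aoid + \alpha_2 \idat$ and $z_b = \beta_1 \boid + \beta_2 \idbt$, the condition with $x = \idba$ uses only untwisted products and gives $\beta_1 + \beta_2 = \alpha_1 + \alpha_2$. The condition with $x = \idab$, however, picks up the signs $\aoid *_\tau \idab = \idat *_\tau \idab = -\xo$ from the twisted table, while $\idab *_\tau z_b = (\beta_1 + \beta_2)\xo$ since the $b$-side table is untouched by $\tau_C$. Hence the second condition reads $-(\alpha_1 + \alpha_2) = \beta_1 + \beta_2$, and combined with the first it forces $\alpha_1 + \alpha_2 = \beta_1 + \beta_2 = 0$. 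The bidegree $2$ piece of $OZ({OH}^2_\tau)$ therefore has $\Z[i]$-rank $2$, as opposed to the rank $3$ that the same computation produces in the untwisted algebra $OH^2_C$ (in agreement with Theorem~\ref{thm:iso}).

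Bidegrees $0$ and $4$ are straightforward: bidegree $0$ contributes rank $1$ (spanned by $\idaa + \idbb$) and bidegree $4$ is unconstrained of rank $2$, since all relevant products with $\idab$, $\idba$, $\xo$ and $\yo$ vanish. Hence $\rank_{\Z[i]}(OZ({OH}^2_\tau)) = 1 + 2 + 2 = 5$, whereas $\rank_{\Z[i]}(OH(\mathfrak{B}_{2,2}, \Z[i])) = \binom{4}{2} = 6$ by \cite[Corollary~3.9]{laudarussell14}. The two algebras cannot be isomorphic, already as $\Z[i]$-modules. The main subtlety I expect is to carefully track the asymmetry in Example~\ref{ex:twistedoh2} — the twists live on some $a$-side multiplications but not their $b$-side counterparts — since it is exactly this asymmetry which upgrades the single equalizer relation of the untwisted case into a pair of incompatible relations.
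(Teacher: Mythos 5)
Your proposal is correct and takes essentially the same route as the paper, which simply asserts the graded rank of $OZ({OH}^2_\tau)$ to be $1 + 2q^2 + 2q^4$ without exhibiting the computation; you carry out the degree-by-degree rank count and arrive at the same total rank $5$ against $\binom{4}{2}=6$. One small caveat: the justification that Propositions~\ref{prop:inccenter} and~\ref{prop:caractoddcenter} ``go through verbatim'' because ``$\tau_C$ is trivial on bidegrees containing an identity morphism'' is not quite accurate as stated — the twist of Example~\ref{ex:twistedoh2} is supported precisely on pairs whose first arc-degree slot \emph{is} an identity $W(a)a$ — so the abstract balance-condition characterisation is not automatic for a twisted multiplication. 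This does not affect your conclusion, because for $n=2$ one can (and you implicitly do) verify membership in the odd center directly from the tables: commutation with the exterior-algebra pieces is automatic, commutation with $\xo$ and $\yo$ produces only $0=0$ by quantum-degree bounds, and commutation with $\idab$ and $\idba$ gives exactly the two incompatible relations $\alpha_1+\alpha_2 = \beta_1+\beta_2$ and $-(\alpha_1+\alpha_2) = \beta_1+\beta_2$ that shrink the degree-$2$ piece from rank $3$ to rank $2$.
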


\begin{proof}
It is not hard to compute that the odd center of ${OH}^2_{\tau}$ is generated by the elements
$$OZ({OH}^2_{\tau}) = \left\langle \idaa + \idbb, \aoid - \idat, \boid - \idbt, \aoat, \bobt\right\rangle$$
and thus has graded rank $1 + 2q^2 + 2q^4$. However, we know that $OH(\mathfrak B_{2,2}, \Z[i])$ has graded rank $1 + 3q^2 + 2q^4$.
\end{proof}

Despite this result, it is easy to show that ${OH}^n_{\tau}$ contains a subalgebra which is isomorphic to $OH(\mathfrak B_{n,n}, \Z[i])$
by constructing an injective map, say $\tilde h$, similar to $h_0$ from Section~\ref{sec:ospringer}. 
  As $\bigoplus_a a(OH_\tau^n)a$ is isomorphic to an exterior algebra $\tilde h$ will be well defined.
    Moreover the different arc algebras being isomorphic modulo $2$, it is injective by the same
    reason as in Proposition~\ref{prop:injective}.

\section{Perspectives}

One natural application of the work in this paper could be the construction of odd Khovanov homology for tangles  
(Putyra-Shumako\-vitch's work in progress using the structure of quasialgebras~\cite{putyrashumakovitch}).  
The fact that the twist $\tau$ is not explicit may cause several technical difficulties in defining 
(and working with) $(OH^n_{\tau},OH^n_{\tau})$-bimodules. 

Another possibility consists of working with quasibimodules, that is bimodules with the associativity axiom given by an associator, as in~\cite{putyrapreprint}.
With such a theory at hand, it seems plausible that the braid group action on 
the category of complexes of $(OH^n_{C},OH^n_{C})$-quasibimodules  up to homotopy descends 
to an action of the $(-1)$-Hecke algebra from~\cite[Section 4]{laudarussell14} on its (odd) center, 
paralleling the even case (see~\cite[Section 5.3]{khovanov02}).  

\smallskip 

The fact that the twisted odd arc algebra $OH^n_{\tau}$ 
is defined over the Gaussian integers was for technical reasons. 
One question that we leave open is to find whether it is possible to twist $OH^n_C$ over the integers. 

\smallskip

The construction in this paper shares several features with Ehrig-Stroppel's Khovanov arc algebra of type $D$ 
from~\cite{ehrig-stroppel1}. 
It would be interesting to find a connection between these two arc algebras. 

\smallskip 

In~\cite{brundan-stroppel3}, an action of the 2-Kac-Moody of Rouquier~\cite{rouquier} 
(and therefore of Khovanov-Lauda's~\cite{khovanov-lauda1}) on Khovanov's arc algebras was constructed.   
The results of Rouquier on strong categorical actions~\cite{rouquier}, 
together with the fact that our associative arc algebra is 
not isomorphic to Khovanov's, imply that the 2-Kac-Moody algebra does not act on it. 
It seems plausible to expect that the odd arc algebra admits an action of an algebra akin to  
Brundan and Kleshchev's Hecke-Clifford superalgebra 
from~\cite{brundan-kleshchev}, which could be seen as a super counterpart of the 
cyclotomic KLR algebra.     

\smallskip 

Another challenging problem we would like to mention is to find the representation-theoretic 
context (category $\mathcal{O}$) for the odd arc algebras. 
The analogy with~\cite{ehrig-stroppel1} and~\cite{ehrig-stroppel2}, taken together with~\cite{ellis-lauda} 
and the results in~\cite[Section 5.1]{ehrig-stroppel-tubbenhauer2} might 
suggest a connection to category $\mathcal{O}$ for Lie superalgebras.

\bibliographystyle{habbrv}
\bibliography{NaisseVaz_HnOdd}

\end{document}